\theoremstyle{plain}
\newtheorem{thm}{Theorem}[section]
\newtheorem{lem}[thm]{Lemma}
\newtheorem{prop}[thm]{Proposition}
\def\@rst #1 #2other{#1}
\newcommand\MR[1]{\relax\ifhmode\unskip\spacefactor3000 \space\fi
  \MRhref{\expandafter\@rst #1 other}{#1}}
\newcommand{\MRhref}[2]{\href{http://www.ams.org/mathscinet-getitem?mr=#1}{MR#2}}
\theoremstyle{definition}
\newtheorem{defn}[thm]{Definition}
\newtheorem{remark}[thm]{Remark}
\newtheorem{example}[thm]{Example}
\numberwithin{equation}{section}
\def\alb#1\ale{\begin{align*}#1\end{align*}}
\def\allb#1\alle{\begin{align}#1\end{align}}
\newcommand{\eqb}{\begin{equation}}
\newcommand{\eqe}{\end{equation}}
\newcommand{\eqbn}{\begin{equation*}}
\newcommand{\eqen}{\end{equation*}}
\newcommand{\BB}{\mathbbm}
\newcommand{\ol}{\overline}
\newcommand{\bd}{\mathbf}
\newcommand{\ep}{\varepsilon}
\newcommand{\wt}{\widetilde}
\newcommand{\wh}{\widehat}
\newcommand{\mcl}{\mathcal}
\newcommand{\bdy}{\partial}
\newcommand{\cc}{{\mathbf{c}}}
\let\originalleft\left
\let\originalright\right
\renewcommand{\left}{\mathopen{}\mathclose\bgroup\originalleft}
\renewcommand{\right}{\aftergroup\egroup\originalright}
\title{Inverting the operation of conditioning a branching process on extinction}
 \date{ }
 \author{
\begin{tabular}{c} Ewain Gwynne\\[-3pt]\small University of Chicago \end{tabular}
\begin{tabular}{c} Jiaqi Liu\\[-3pt]\small Lehigh University \end{tabular}  
}
\begin{document}

\maketitle

\begin{abstract}
It is well-known that conditioning a supercritical (multi-type) branching process on the event that it eventually becomes extinct yields a subcritical branching process. We study the corresponding inverse problem: given a subcritical branching process, does there exist a supercritical branching process with the property that when we condition it on extinction, we get back the original subcritical branching process? We show that such a supercritical branching process (which we call a conjugate branching process) exists under mild hypotheses on the original subcritical branching process. We also show by example that if there are at least two types, then the conjugate branching process is not necessarily unique. Our results are relevant to the problem of constructing natural random planar maps whose scaling limit is given by supercritical Liouville quantum gravity. Moreover, conjugate branching processes can also be used to give alternative evolutionary hypotheses in cancer modeling. 
\end{abstract}

\tableofcontents

\bigskip
\noindent\textbf{Acknowlegments.} {We thank two anonymous referees for helpful comments on an earlier version of this paper.} We thank Morris Ang, Manan Bhatia, Xinxin Chen, Hui He, Alex McAvoy, Robin Pemantle, Jason Schweinsberg, Xin Sun, and Jinwoo Sung for helpful discussions. E.G.\ was partially supported by NSF grant DMS-2245832. 

\section{Introduction}
\label{sec-intro}

\subsection{Overview}
\label{sec-overview}

Let $d\in\BB N$ and consider a $d$-type branching process $X = \{X(n) \}_{n\in\BB N_0}$, where $X(n) = (X_1(n) , \dots, X_d(n))$ is the vector which gives the number of individuals of types $k\in \{1,\dots,d\}$ in the $n$th generation. 
For $k\in \{1,\dots,d\}$, let $\BB P_k$ be the probability measure where we start with a single individual of type $k$ (i.e., $X_k(0) = 1$ and $X_j(0) = 0$ for $j\not=k$), and let $\BB E_k$ be the corresponding expectation. 

A classical fact in the theory of branching processes says that conditioning a supercritical branching process on extinction gives a subcritical branching process. More precisely, let $E$ be the event that $X(n) = 0$ for all sufficiently large $n$, and assume that $0 < \BB P_k[E] < 1$ for each $k\in\{1,\dots,d\}$. Then the conditional law of $X$ given $E$ is that of a subcritical branching process. See Proposition~\ref{prop-cond-on-ext} below for a precise statement and~\cite[Theorem 1]{jl-cond-on-ext} for a more general result. The branching process obtained by conditioning on extinction is sometimes called the \textbf{conjugate branching process}.

In this paper, we will investigate the extent to which the operation of conditioning a branching process on extinction is invertible. In other words, we will begin with a subcritical branching process $X$, and ask whether there exists a supercritical branching process $\wh X$ whose conjugate is $X$. As we explain in Section~\ref{sec-lqg}, this question is relevant to the problem of constructing random planar maps whose scaling limit is described by supercritical (a.k.a.\ strongly coupled) Liouville quantum gravity. As an application in biology, we also explain in Section~\ref{sec-bio} how conjugate branching processes can be useful in giving alternative evolutionary hypotheses in cancer modeling. However, we emphasize that the proofs in this paper are completely elementary: no knowledge of Liouville quantum gravity or biology is needed to understand the paper. 
 
\begin{defn} \label{def-conjugate}
Let $X = \{X(n) : n\in\BB N\}$ be a subcritical $d$-type branching process such that the $\BB P_k$-extinction probability is one for each $k\in \{1,\dots,d\}$. 
A $d$-type branching process $\wh X$ is called a \textbf{conjugate branching process} for $X$ if $\wh X$ does not have the same law as $X$ and for each $k\in \{1,\dots,d\}$, the conditional law of $\wh X$ (started from a single individual of type $k$) given that it eventually becomes extinct is the same as the $\BB P_k$-law of $X$. 
\end{defn}
Our main result (Theorem~\ref{thm-fixed-gf}) states that under mild conditions, a subcritical multi-type branching process admits a conjugate branching process. By basic branching process theory, this is equivalent to the existence of a fixed point for the generating function which lies in $(1,\infty)^d$. We note that the proof of the single-type case of our result is straightforward (see Proposition~\ref{prop-single-type}), but non-trivial topological arguments, based on the Brouwer fixed point theorem, are needed to treat the case when $d\geq 2$. 
 
We also give an example to show that the conjugate branching process is not unique, even for a ``generic'' subcritical branching process (Example~\ref{counterexample}).
Finally, we extend our result to the case of branching processes with countably many types, see Section~\ref{sec-infty}. 

\begin{remark}
There are various ways to make sense of conditioning a subcritical or critical branching processes on the (zero-probability) event that it never becomes extinct. The first approach was introduced by Kolmogorov \cite{kolmogorov1938losung} and Yaglom \cite{yaglom1947certain}, who considered the limiting behavior of the process conditioned on non-extinction up to generation $n$ as $n \to \infty$. In the single type subcritical case, Yaglom \cite{yaglom1947certain} showed that as $n \to \infty$, the distribution of $\{ X(n) | X(n)>0 \}$ converges to a proper distribution. In the single type critical case, Yaglom \cite{yaglom1947certain} showed that as $n \to \infty$, the distribution of the rescaled process $\{X_n/n|X_n>0\}$ converges to an exponential distribution. The multi-type analog was studied by \cite{harris1963theory, jirina1957asymptotic, joffe1967, mullikin1963} in the case of finitely many types, and \cite{seneta1966quasi} in the countable-type case. The second approach is conditioning the process on not being extinct in the distant future, i.e. $\{X(n) | X(n+k)>0\}$ as $k \to \infty$. This leads to the so-called $Q$-process, and was first studied by \cite{lamperti1968conditioned} in the single type case, and later by \cite{dallaporta2008q} in the multi-type case. For other ways to condition a (sub)critical branching process on non-extinction, see Section 2.1 of \cite{Penisson-thesis}. The notion of a conjugate branching process considered in the present paper is distinct from these concepts: a conjugate of a subcritical branching process is a supercritical branching process, whereas conditioning a subcritical process on non-extinction usually leads to the stationary behavior of the process and the resulting object is typically not a branching process.
\end{remark}

\subsection{Main result}
\label{sec-result}

Fix $d\in\BB N$ and consider a $d$-type branching process $X = \{X(n) \}_{n\in\BB N_0}$. 
We define the \textbf{generating function} $f : (0,\infty]^d \to (0,\infty]^d$ by
\eqb  \label{eqn-generating-function}
f_k(q) := \BB E_k\left[ \prod_{j=1}^d q_j^{X_j(1)} \right] \quad \text{and} \quad f(q) = (f_1(q) ,\dots,f_d(q)) , \quad \text{where} \quad q = (q_1,\dots,q_d) .
\eqe
Note that we allow $q_j = \infty$ for some values of $j$, in which case we interpret $\infty^0 = 1$ and $\infty^x = \infty$ for $x > 0$.   
For $n\in\BB N$, let $f^{(n)}$ be the composition of $f$ with itself $n$ times.
Equivalently, for $k\in \{1,\dots,d\}$, then $k$th component of $f^{(n)}$ is given by 
\eqb \label{eqn-gen-func-n}
f_k^{(n)}(q) := \BB E_k\left[ \prod_{j=1}^k q_j^{X_j(n)} \right]  . 
\eqe 
 
We also define the $d\times d$ matrix $M$ whose $k,j$ entry is 
\eqb \label{eqn-mean-matrix} 
 M_{k,j} :=   \BB E_k[X_j(1)]  .
\eqe 
We assume that each $M_{k,j}$ is finite. 
Then for $n\in\BB N$, the $k,j$ entry of $M^n$ is $\BB E_k[X_j(n)]$. Recall that the branching process is \textbf{subcritical} (resp.\ \textbf{critical}, \textbf{supercritical}) if the largest eigenvalue $\rho$ of $M$ is less than one (resp.\ equal to one, greater than one). 

The following are standard non-degeneracy conditions for multi-type branching processes, see, e.g.,~\cite[Section 6]{an-branching-processes}. 

\begin{defn} \label{def-bp-basic}  
We say that $X$ is \textbf{positive regular} if there exists $N\in\BB N$ such that 
\eqbn
\BB P_k\left[ X_j(N) \geq 1 \right] > 0 ,\quad\forall k,j \in \{1,\dots,d\} ,
\eqen
which is equivalent to the condition that the matrix $M^N$ has all positive entries.
We say that the branching process is \textbf{non-singular} if there exists $k \in\{1,\dots,d\}$ such that
\eqbn
\BB P_k\left[ \sum_{j=1}^d X_j(1) \geq 2 \right] >  0 .
\eqen 
\end{defn}

It is easy to see that if $X$ is a subcritical, positive regular, and non-singular multi-type branching process, then the existence of a conjugate branching process for $X$ in the sense of Definition~\ref{def-conjugate} is equivalent to the existence of a point $q \in (1,\infty)^d$ for which $f(q) = q$ (see Proposition~\ref{prop-conjugate} below). 
The main result of this paper is the following theorem. 
 
\begin{thm} \label{thm-fixed-gf}
Consider a $d$-type branching process $X = \{X(n)\}_{n\in\BB N_0}$ with generating function $f$ satisfying the following hypotheses. 
\begin{enumerate}[I.]
\item $X$ is subcritical, i.e., $\rho <1$. 
 \label{item-gf-extinction}
\item $X$ is positive regular and non-singular (Definition~\ref{def-bp-basic}).  \label{item-gf-pos} 
\item For each $k\in\BB N$, the $k$th component $f_k$ of the generating function is a continuous function from $(0,\infty]^d$ to $(0,\infty]$ (with $(0,\infty]$ equipped with the topology of a half-open interval). \label{item-gf-cont}
\end{enumerate}
Then there exists a conjugate branching process for $X$, equivalently, there exists $q \in (1,\infty)^d  $ such that $f(q) = q$. 
\end{thm}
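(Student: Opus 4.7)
The plan is to prove existence of a fixed point of $f$ in $(1,\infty)^d$ via a Brouwer-type argument of Perron--Frobenius flavor. I would first replace $f$ by the iterate $F := f^{(N)}$, where $N$ is the constant from positive regularity; then the mean matrix $M^N$ of $F$ is strictly positive, still has spectral radius $\rho^N < 1$, and the only fixed points of $F$ on the boundary of $[1,\infty]^d$ are $\mathbf{1}$ and the all-infinity corner (this uses that $\BB P_k[X_j(N) \geq 1] > 0$ for all $k,j$, so $F_k(q) = 1$ together with $q_k = 1$ forces $q = \mathbf{1}$). Setting $\tilde F(u) := F(\mathbf{1}+u) - \mathbf{1}$ turns the question into finding $u \in (0,\infty)^d$ with $\tilde F(u) = u$; the map $\tilde F$ is continuous, coordinatewise monotone, convex, with $\tilde F(0) = 0$ and $D\tilde F(0) = M^N$.

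For each $c > 0$, the simplex $S_c := \{u \in [0,\infty)^d : \|u\|_1 = c\}$ is homeomorphic to a $(d-1)$-disk, and $\Psi_c(u) := c\,\tilde F(u)/\|\tilde F(u)\|_1$ is a continuous self-map of $S_c$ (positive regularity implies $\tilde F(u) \neq 0$ on $S_c$). Brouwer's theorem yields $u_c \in S_c$ with $\tilde F(u_c) = \lambda(c)\,u_c$ for $\lambda(c) := \|\tilde F(u_c)\|_1/c > 0$, and an irreducibility argument based on the strict positivity of $M^N$ shows that $u_c$ has all strictly positive entries: if $u_{c,k} = 0$, then $\tilde F_k(u_c) = 0$ forces the $N$-step process from type $k$ never to produce any type $j$ with $u_{c,j} > 0$, and iterating together with the strict positivity of $M^N$ forces $u_c \equiv 0$. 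It remains to find $c^*$ with $\lambda(c^*) = 1$: the linearization near $u = 0$ combined with Perron--Frobenius for $M^N$ gives $\lambda(c) \to \rho^N < 1$ as $c \to 0^+$, and non-singularity (spread across all types via positive regularity) gives $\lambda(c) \to \infty$ as $c \to \infty$, so an intermediate value argument supplies $c^*$, producing a fixed point $\mathbf{1} + u_{c^*}$ of $F$ in $(1,\infty)^d$.

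The main obstacle is the intermediate-value step: Brouwer asserts existence, not uniqueness, of $u_c$, so $\lambda$ is a priori multivalued and discontinuous. One handles this by considering the closed graph $\Gamma := \{(c,u) : u \in S_c,\ \tilde F(u) \in \mathbb{R}_{>0}\cdot u\}$, using upper semicontinuity of Brouwer fixed-point sets together with compactness to extract a connected branch of $\Gamma$ extending from the small-$c$ Perron asymptote out to $c \to \infty$, and applying the ordinary intermediate value theorem along this branch. This is where the ``non-trivial topological arguments'' mentioned in the introduction enter. Finally, to pass from the $F$-fixed point back to a fixed point of $f$, take a minimal element $q^{\#}$ of $\mathrm{Fix}(F) \cap (1,\infty)^d$ in the componentwise order (which exists and lies strictly above $\mathbf{1}$ because $I - M^N$ is invertible, so $\mathbf{1}$ is isolated in $\mathrm{Fix}(F)$). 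The commutation $F \circ f = f \circ F$ gives $f(q^{\#}) \in \mathrm{Fix}(F) \cap (1,\infty)^d$; minimality rules out $f(q^{\#}) < q^{\#}$, and if $f(q^{\#}) \geq q^{\#}$ the chain $q^{\#} \leq f(q^{\#}) \leq \cdots \leq f^{(N)}(q^{\#}) = F(q^{\#}) = q^{\#}$ forces equality throughout, so $f(q^{\#}) = q^{\#}$; the incomparable case is handled by applying the same argument to $q^{\#} \wedge f(q^{\#})$.
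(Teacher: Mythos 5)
Your strategy is genuinely different from the paper's. The paper also invokes Brouwer, but in a single, structured way: it passes to $g(\lambda) := \log f(e^\lambda)$, defines the basins $S_0, S_\infty$ of the iteration $g^{(n)}$, proves (via a Vitali-convergence argument, Lemma~\ref{lem-unstable-scaled}) that each ray from the origin meets the compact set $\Lambda := [0,\infty)^d \setminus (S_0 \cup S_\infty)$ exactly once, deformation-retracts a disk-shaped annular region onto $\Lambda$, and then applies Brouwer once to $g$ composed with this retraction. The change of variables to $\lambda = \log q$ is essential there: the radial-scaling lemma works because $q \mapsto q^r$ interacts well with H\"older's inequality, and this replaces the entire ``find a branch of normalized eigenvectors'' machinery in your sketch. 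Three steps in your argument have real gaps:

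\textbf{(1) $\Psi_c$ may be undefined.} Hypothesis~\ref{item-gf-cont} only asserts $f_k : (0,\infty]^d \to (0,\infty]$, so $\tilde F(u)$ may have infinite components for $u \in S_c$ once $c$ is moderately large, and $c\,\tilde F(u)/\|\tilde F(u)\|_1$ is then ill-defined (and discontinuous across the blow-up locus). The paper handles this by working on the compactification $[0,\infty]^d$ and absorbing the blow-up set into $S_\infty$.

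\textbf{(2) The connected branch is the crux, not a technicality.} Closedness of $\Gamma$ follows from upper semicontinuity, but that does not produce a connected subset of $\Gamma$ joining the small-$c$ region to the large-$c$ region. You would need a Leray--Schauder/Whyburn-type continuation theorem, and you have not verified its hypotheses (e.g.\ a nondegenerate degree count along the boundary, a priori bounds, and most importantly finiteness of $\tilde F$ throughout the relevant region). The paper's scaling lemma is precisely what sidesteps all of this.

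\textbf{(3) The passage from $F = f^{(N)}$ back to $f$ fails in the incomparable case.} You invoke minimality of $q^\#$ in $\mathrm{Fix}(F)\cap(1,\infty)^d$, but $q^\# \wedge f(q^\#)$ is generally \emph{not} a fixed point of $F$: monotonicity only yields $F(q^\# \wedge f(q^\#)) \leq F(q^\#) \wedge F(f(q^\#)) = q^\# \wedge f(q^\#)$, which can be strict. So the set is not closed under $\wedge$, and ``applying the same argument'' to $q^\# \wedge f(q^\#)$ is not licensed. Iterating $F$ from $q^\# \wedge f(q^\#)$ only yields a decreasing sequence whose limit could be $\mathbf{1}$, giving no contradiction. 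This gap is an artifact of replacing $f$ by $f^{(N)}$; the paper works with $g$ (hence $f$) directly and never needs a reduction of this kind, using positive regularity only for strict monotonicity, for Lemma~\ref{lem-gf-pos}, and for upgrading the fixed point from $[1,\infty)^d\setminus\{\mathbf 1\}$ to $(1,\infty)^d$ in Lemma~\ref{lem-fixed-pos}.
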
 

Hypothesis~\ref{item-gf-extinction} is necessary for Theorem~\ref{thm-fixed-gf} to hold since a supercritical branching process conditioned on extinction is always a subcritical branching process. Hypothesis~\ref{item-gf-pos} just gives standard regularity conditions. The most restrictive hypothesis is Hypothesis~\ref{item-gf-cont}. To see why this hypothesis cannot be removed, we note that it is possible that the first two hypotheses hold but that $f(q) = (\infty,\dots,\infty)  \not= q$ for all $q \in (1,\infty)^d  $ (in which case no conjugate branching process exists). For a concrete example, consider the single-type branching process with $\BB P[X(1) = m] = a m^{-3}$ for $m\geq 1$, and $\BB P[X(1) = 0] = b$ for appropriate constants $a,b>0$. We note that Hypothesis~\ref{item-gf-cont} is automatically satisfied if $X$ is positive regular and non-singular and $f_k(q) < \infty$ for all $k\in \{1,\dots,d\}$ and all $q\in (0,\infty)^d$.

\begin{example}[Non-uniqueness] \label{counterexample}
The conjugate branching process in Theorem~\ref{thm-fixed-gf} appears to be unique for many simple examples of subcritical branching processes. However, it is not unique in general, even for a ``generic'' choice of offspring distribution. As a fairly minimal example, consider the case when $d=2$ and the offspring distribution is given by
\alb
&\BB P_k\left[ X_k(1) =1 , X_{3-k}(1) = 0 \right] = \BB P_k\left[ X_k(1) =2 , X_{3-k}(1) = 0 \right]  =  \frac14 , \\
&\BB P_k\left[ X_k(1) = X_{3-k}(1) = 2 \right] = \frac{1}{100}, \quad
\BB P_k\left[ X_k(1) = X_{3-k}(1) = 0\right] = \frac{49}{100} ,\quad\forall k \in \{1,2\} .
\ale
The branching process clearly satisfies the hypotheses of Theorem~\ref{thm-fixed-gf}, and one can check that the equation $f(q_1,q_2) = (q_1,q_2)$ has three solutions in $(1,\infty)^2$. Furthermore, this example is robust in the sense that if we slightly perturb the offspring distribution (e.g., we could add a small positive chance to have more than three offspring of each type) then there are still three fixed points of the generating function in $(1,\infty)^2$. See Figure~\ref{fig-counterexample} for a visualization of why this example works. We also have a similar example where $d=3$ and there are seven fixed points in $(1,\infty)^d$.  
In general, we expect that for a fixed $d$, the number of fixed points of $f$ in $(1,\infty)^d$ can be arbitrarily large.
\end{example} 

\begin{figure}[ht!]
\begin{center}
\includegraphics[width=0.3\textwidth]{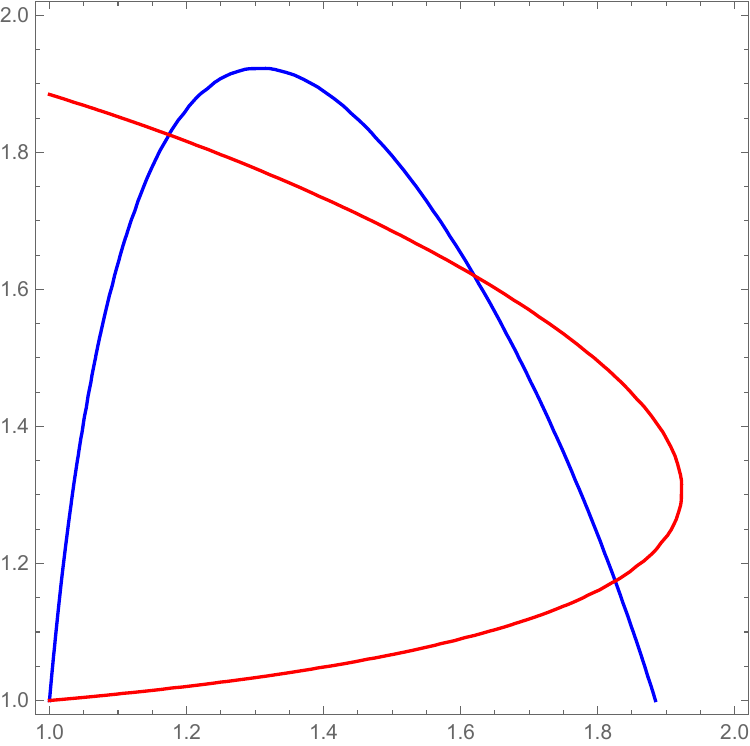}  
\caption{\label{fig-counterexample} Plot of the solution sets of $f_1(q_1,q_2) = q_1$ (blue) and $f_2(q_1,q_2) = q_2$ (red) in the setting of Example~\ref{counterexample}, made using Mathematica. The intersection points of these sets are the fixed points of $f$. 
}
\end{center}
\end{figure}

Finally, we note that the analog of Theorem~\ref{thm-fixed-gf} in the case of a single-type branching process is easy to prove, and one also gets that the fixed point is unique. 

\begin{prop} \label{prop-single-type}
Let $X= \{X(n)\}_{n \geq 0}$ be a single-type branching process started from a single individual and let $f$ be its generating function. Assume that 
\begin{enumerate}[I.]
\item $X$ is subcritical, i.e., $\BB E[X(1)] < 1$.  \label{item-single-type-sub}
\item We have $\BB P[X(1) \geq 2] > 0$.  \label{item-single-type-convex} 
\item $f$ is continuous from $(0,\infty]$ to $(0,\infty]$ (with $(0,\infty]$ equipped with the topology of a half-open interval). \label{item-single-type-cont}
\end{enumerate}
Then there exists a unique conjugate branching process for $X$. Equivalently, there exists a unique value of $ q > 1$ for which $f(  q) =  q$. 
\end{prop}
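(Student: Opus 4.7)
The plan is to combine the Intermediate Value Theorem (for existence) with strict convexity of the probability generating function (for uniqueness). Write $f(s) = \sum_{n \geq 0} p_n s^n$ with $p_n = \BB P[X(1) = n]$, set $g(s) := f(s) - s$, and let $q^* := \sup\{q > 0 : f(q) < \infty\} \in (0,\infty]$. Hypothesis~\ref{item-single-type-cont} together with $f(1) = 1 < \infty$ forces $q^* > 1$: since the preimage $f^{-1}((0,\infty))$ is open in $(0,\infty)$ and contains $1$, $f$ is finite on an open neighborhood of $1$. On that neighborhood $f$ is a convergent power series, so differentiating term by term at $s=1$ yields $f'(1) = \BB E[X(1)] < 1$ by Hypothesis~\ref{item-single-type-sub}. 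Consequently $g(1) = 0$ and $g'(1) < 0$, so $g(q) < 0$ for all $q > 1$ sufficiently close to $1$.

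To finish existence I produce some $q \in (1, q^*)$ with $g(q) > 0$ and apply the IVT. If $q^* = \infty$, Hypothesis~\ref{item-single-type-convex} supplies some $n \geq 2$ with $p_n > 0$, whence $f(q)/q \geq p_n q^{n-1} \to \infty$ as $q \to \infty$, so $g(q) > 0$ for $q$ large. If $q^* < \infty$, continuity into $(0,\infty]$ forces $f(q^*) = \infty$ (since $f$ takes the value $\infty$ on every right neighborhood of $q^*$), so $\lim_{q \uparrow q^*} f(q) = \infty$ while $q$ stays bounded by $q^*$, giving $g(q) > 0$ for $q \in (1,q^*)$ sufficiently close to $q^*$. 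Either way, the IVT applied to the continuous function $g$ produces a fixed point $q \in (1, \infty)$.

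For uniqueness, note that on $(0, q^*)$ the second derivative $f''(s) = \sum_{n \geq 2} n(n-1) p_n s^{n-2}$ is strictly positive by Hypothesis~\ref{item-single-type-convex}, so $g$ is strictly convex on $(0, q^*)$ and has at most two zeros there. Since $q=1$ is already a zero and $g'(1) < 0$, there is at most one additional zero, necessarily in $(1, q^*)$; and on $[q^*, \infty)$ (if nonempty) $g \equiv +\infty$ contributes no zeros. Combined with the existence step this gives a unique fixed point in $(1, \infty)$.

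The only delicate point is handling the case $q^* < \infty$ in the existence step, which is precisely where Hypothesis~\ref{item-single-type-cont} is essential: it rules out the pathological situation in which $f$ stays bounded as $q \uparrow q^*$ but equals $+\infty$ just past $q^*$, which would leave no interval on which the IVT could produce a fixed point.
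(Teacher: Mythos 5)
Your proof is correct and takes essentially the same route as the paper's: use Hypothesis III to get $f$ finite on a neighborhood of $1$, use $f'(1)<1$ to get $f(s)<s$ just above $1$, use Hypothesis II to produce $t>1$ with $f(t)>t$, apply the intermediate value theorem, and use strict convexity of $f$ on its interval of finiteness to conclude uniqueness. The only difference is that you spell out the case split between $q^* = \infty$ and $q^* < \infty$ when producing the point where $f$ exceeds the identity, whereas the paper handles this more tersely; the substance is the same.
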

\begin{proof}
Since $f(1) = 1$ and by Hypothesis~\ref{item-single-type-cont}, there exists $\ep > 0$ such that $f(q) < \infty$ for all $q \in [1,1+\ep]$. 
We have $\BB E[X(1)] = f'(1) < 1$, so by Taylor expansion it follows that if $s > 1$ is sufficiently close to 1, then $f(s) < s$. By Hypothesis~\ref{item-single-type-convex}, there exists a sufficiently large $t > 1$ for which $f(t) > t$. By Hypothesis~\ref{item-single-type-cont}, there exists $  \ol q \in [s,t]$ for which $f( \ol q) =   \ol q$. Since $f''(q) > 0$ for all $q > 0$ for which $f(q) < \infty$, we have that $f$ is convex on the maximal interval where it is finite-valued. Hence there are at most two values of $q$ for which $f(q) = q$, namely $1$ and $ \ol q$. 
\end{proof}

\subsection{Proof outline}
\label{sec-outline}

Our proof of Theorem~\ref{thm-fixed-gf}, given in Section~\ref{sec-proof}, is via an elementary topological argument. 
The basic strategy is to reduce to a situation where we can apply the Brouwer fixed point theorem. To this end, we will consider the basins of attractions defined by
\eqbn
S_0 := \left\{ \lambda \in [0,\infty]^d : \lim_{n\to\infty} \log f^{(n)}(e^\lambda) = 0 \right\} \;\; \text{and} \;\;
S_\infty := \left\{ \lambda \in [0,\infty]^d : \limsup_{n\to\infty} |\log f^{(n)}(e^\lambda)| = \infty \right\} ,
\eqen
where here logarithms and exponentials are applied componentwise, e.g., $e^\lambda = (e^{\lambda_1} ,\dots,e^{\lambda_d})$. 

With some work, we can show that each of $S_0$ and $S_\infty$ is relatively open in $[0,\infty]^d$ (Lemma~\ref{lem-basin-open}). This is the part of the proof which requires the continuity assumption on $f$ appearing in Theorem~\ref{thm-fixed-gf}. We then consider the compact set $\Lambda := [0,\infty)^d\setminus (S_0\cup S_\infty)$.
Using the openness of $S_0$ and $S_\infty$ and the monotonicity properties of $f$ established in Section~\ref{sec-mono}, we show that each infinite ray started from 0 which is contained in $[0,\infty)^d$ intersects $\Lambda$ exactly once (Lemma~\ref{lem-ray-intersect}). We note that for this to be true, it is important that $S_0$ and $S_\infty$ are defined using $f(e^\lambda)$ rather than $f(q)$ (see Lemma~\ref{lem-unstable-scaled}). 

 From this, we deduce that $\Lambda$ is a deformation retract of a collar neighborhood $A$ of $\Lambda$ which has the topology of a closed disk (Lemma~\ref{lem-retract}). The Brouwer fixed point theorem implies that every continuous function from $A$ to itself has a fixed point, so the same is true for $\Lambda$. It is clear that the function $g(\lambda) = \log f(e^\lambda)$ (with $\log$ applied componentwise) maps $\Lambda$ to itself. From this we get that $g$ has a fixed point in $\Lambda$, which implies that $f$ has a fixed point in $\exp(\Lambda)\subset [1,\infty)^d \setminus \{(1,\dots,1)\}$. The positive regularity of the branching process implies that the fixed point is in fact in $(1,\infty)^d$ (Lemma~\ref{lem-fixed-pos}).
 
In Section~\ref{sec-infty}, we establish a version of our theorem (Theorem~\ref{thm-fixed-pt-infty}) for a countable-type branching process by taking a limit as the number of types goes to $\infty$. In Section~\ref{sec-lqg}, we discuss the applications of our results to random planar maps and Liouville quantum gravity. In Section~\ref{sec-bio}, we discuss how our result can be used to give alternative evolutionary hypotheses in cancer modeling.

\section{Proof of main theorem}
\label{sec-proof}

\subsection{Preliminaries on branching processes}
\label{sec-prelim}
We review some fairly standard facts about branching processes which are needed for our proofs. 
The following is a precise statement of the fact that conditioning a (multi-type) supercritical branching process on extinction gives a subcritical branching process.

\begin{prop} \label{prop-cond-on-ext}
Let $X = \{X(n) \}_{n\in\BB N_0}$ be a positive regular, non-singular $d$-type branching process  (Definition~\ref{def-bp-basic}) with generating function $f$. Let $E$ be the event that the process eventually becomes extinct. For $k\in \{1,\dots,d\}$, define the extinction probability 
\eqbn
a_k := \BB P_k\left[ E \right] .
\eqen 
Then the only solutions of $f(q) =q$ in the unit cube $[0,1]^d$ are $a = (a_1,\dots,a_d)$ and $(1,\dots,1)$. 

Suppose that $0 < a_k < 1$ for every $k \in \{1,\dots,d\}$. Then for each $k\in\{1,\dots,d\}$, the $\BB P_k$-conditional law of $X$ given $E$ is that of a branching process with offspring distribution determined by 
\eqb \label{eqn-cond-on-ext}
\BB E_k\left[ F(X_1(1) , \dots, X_d(1)) \,|\, E \right] 
 = \frac{1}{a_k} \BB E_k\left[ F(X_1(1),\dots,X_d(1)) \prod_{j=1}^d a_j^{X_j(1)} \right]   
\eqe 
for every bounded measurable function $F : \BB N_0^d \to \BB R$. Moreover, this branching process is subcritical. 
\end{prop}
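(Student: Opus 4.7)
The proposition naturally splits into three assertions: (i) the only fixed points of $f$ in $[0,1]^d$ are $a$ and $(1,\dots,1)$; (ii) the conditional law of $X$ given $E$ is that of a branching process with the offspring distribution~\eqref{eqn-cond-on-ext}; (iii) this conditioned branching process is subcritical.

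For (i), I would decompose $E$ according to the first generation: by the branching property, given $X(1) = y$, the event $E$ occurs iff each of the $y_j$ independent subtrees of type $j$ goes extinct, which has conditional probability $\prod_j a_j^{y_j}$. Summing over $y$ yields $a_k = \BB E_k[\prod_j a_j^{X_j(1)}] = f_k(a)$, so $a$ is a fixed point of $f$. The uniqueness statement, that $a$ and $(1,\dots,1)$ are the only fixed points of $f$ in $[0,1]^d$ under positive regularity and non-singularity, is classical and I would simply quote it from~\cite{an-branching-processes}.

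For (ii), the same decomposition gives $\BB P_k[X(1) = y, E] = \BB P_k[X(1) = y] \prod_j a_j^{y_j}$, which after dividing by $a_k$ yields~\eqref{eqn-cond-on-ext} for indicators $F$, and by linearity for all bounded $F$. To see that the conditional law is genuinely that of a branching process (and to read off the offspring law at later generations), the key ingredient is the identity $\BB P_k[E \mid \mcl F_n] = \prod_j a_j^{X_j(n)}$, where $\mcl F_n$ is the $\sigma$-algebra generated by generations $0,\dots,n$; this is again immediate from the branching property applied to the subtrees rooted at each generation-$n$ individual. Plugging this into the definition of conditional expectation shows that, under $\BB P_k[\,\cdot \mid E]$, the offspring of each individual in each generation is independently drawn from the distribution in~\eqref{eqn-cond-on-ext}, as needed.

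For (iii), the conditioned generating function computes as $\wh f_k(q) = f_k(a \odot q)/a_k$, where $(a \odot q)_j := a_j q_j$. Differentiating at $q = (1,\dots,1)$ gives the mean matrix $\wh M = D_a^{-1} J_f(a)\, D_a$ with $D_a = \mathrm{diag}(a_1,\dots,a_d)$, so $\wh M$ has the same spectrum as the Jacobian $J_f(a)$. The main obstacle is to show that the Perron--Frobenius eigenvalue $\lambda$ of $J_f(a)$ is strictly less than $1$. Using that each $f_k$ is convex on $[0,1]^d$, Taylor expansion of $f$ at $a$ gives $f(q) - a - J_f(a)(q-a) \geq 0$ componentwise; evaluating at $q = (1,\dots,1)$ yields $(I - J_f(a))(1-a) \geq 0$, and non-singularity (combined with positive regularity to propagate non-singularity across types) makes this inequality strict in at least one coordinate, since the $t^2$-coefficient of $\prod_\ell (a_\ell + t(1-a_\ell))^{y_\ell}$ is strictly positive whenever $\sum_\ell y_\ell \geq 2$. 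Pairing with a strictly positive left Perron eigenvector of $J_f(a)$ (which exists by positive regularity plus Perron--Frobenius) then forces $(1-\lambda)u\cdot(1-a) > 0$, and since $1-a > 0$ componentwise this gives $\lambda < 1$. Alternatively, subcriticality of the conjugate of a supercritical positive regular non-singular branching process is a classical result going back to~\cite{harris1963theory} and stated in~\cite{an-branching-processes}, which one could quote directly.
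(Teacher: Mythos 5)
Your proposal is correct. Note, however, that the paper's own ``proof'' of this proposition is essentially a citation: it refers to \cite[Theorem 2, Section V.3]{an-branching-processes} for the fixed-point dichotomy and to \cite[Theorem 1]{jl-cond-on-ext} for the conditional law, without spelling out the argument. You supply a full, self-contained argument, and also offer the citation route as an alternative --- which is in fact what the paper does. So the overlap is complete on parts (i) and (ii) (first-generation decomposition plus the martingale identity $\BB P_k[E \mid \mcl F_n] = \prod_j a_j^{X_j(n)}$; these are the standard ingredients and are exactly what \cite{jl-cond-on-ext} uses).

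For the subcriticality claim (iii), the paper's source contains a commented-out sketch that argues differently: it bounds $\BB E_k[\sum_j X_j(n) \mid E]$ by $\BB E_k[\sum_j X_j(n)\, b^{\sum_j X_j(n)}]$ with $b = \max_j a_j < 1$, applies dominated convergence using the dichotomy that $\sum_j X_j(n) \to 0$ or $\infty$, concludes $\wh M^n \to 0$, and hence $\wh\rho < 1$. Your argument is a genuinely different route: you observe $\wh M = D_a^{-1}J_f(a)D_a$ (so the two matrices are cospectral), Taylor-expand $f$ at $a$ using the nonnegativity of all partial derivatives of the generating function, evaluate at $(1,\dots,1)$ to get $(I - J_f(a))(\mathbf 1 - a) \geq 0$ with a strict component forced by non-singularity, and pair with a strictly positive left Perron eigenvector to extract $\lambda < 1$. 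This is a standard Perron--Frobenius argument (essentially the multi-type version of ``$f'(a) < 1$ at the lower fixed point''), and it is cleaner than the dominated-convergence sketch in the sense that it does not require analyzing the limit of $\wh M^n$. The one point you elide is why $J_f(a)$ inherits primitivity from $M$; this follows because $J_f(a)^N = J_{f^{(N)}}(a)$ has $(k,j)$-entry $\BB E_k[X_j(N)\prod_\ell a_\ell^{X_\ell(N)}]/a_j > 0$ whenever $\BB P_k[X_j(N)\geq 1] > 0$ and $a \in (0,1)^d$, so positive regularity of $M$ transfers. Both approaches are valid; yours is arguably more illuminating spectrally, while the paper's commented-out version is shorter if one is willing to invoke dominated convergence.
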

\begin{proof}
The proof is elementary: for details, see~\cite[Theorem 2, Section V.3]{an-branching-processes} for the properties of $a$ and, e.g.,~\cite[Theorem 1]{jl-cond-on-ext} for a description of the conditional law of $X$ given $E$ in a somewhat more general setting. 
\end{proof}

We will be interested in the inverse operation of conditioning on extinction. 

\begin{prop} \label{prop-conjugate}
Let $X = \{X(n) \}_{n\in\BB N_0}$ be a positive regular, non-singular $d$-type branching process (Definition~\ref{def-bp-basic}) with generating function $f$. Assume that the extinction probability started from any state is one. 
Let $A \in [1,\infty)^d$ solve $f(A) = A$. Define a new branching process $\wh X = \{\wh X(n)  \}_{n\in\BB N_0}$ with offspring distribution determined by
\eqb \label{eqn-conjugate-def}
\wh{\BB E}_k\left[ F(X_1(1) , \dots, X_d(1))   \right] 
 = \frac{1}{A_k} \BB E_k\left[ F(X_1(1),\dots,X_d(1)) \prod_{j=1}^d A_j^{X_j(1)} \right] ,\quad\forall k\in \{1,\dots,d\}    
\eqe  
for every bounded measurable function $F : \BB N_0^d \to \BB R$. Then the extinction probability for $\wh X$ started from an individual of type $k$ is $1/A_k$, and the $\wh{\BB P}_k$-conditional law of $\wh X$ given the event that it eventually becomes extinct is the same as the $\BB P_k$-law of $X$.
\end{prop}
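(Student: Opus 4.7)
The plan is to view the reweighting~\eqref{eqn-conjugate-def} as the reverse of the $h$-transform that appears in Proposition~\ref{prop-cond-on-ext}, and then to apply Proposition~\ref{prop-cond-on-ext} to the new process $\wh X$ itself. First I would take $F\equiv 1$ in~\eqref{eqn-conjugate-def} to check that it defines a probability measure on $\BB N_0^d$: the total mass is $A_k^{-1}\BB E_k[\prod_j A_j^{X_j(1)}] = A_k^{-1} f_k(A) = 1$ by the hypothesis $f(A)=A$, and non-negativity is immediate from $A_j \geq 1$. Because the reweighting factor $A_k^{-1}\prod_j A_j^{X_j(1)}$ is strictly positive, the offspring distribution of $\wh X$ under $\wh{\BB P}_k$ has the same support as that of $X$ under $\BB P_k$, so $\wh X$ inherits positive regularity and non-singularity from $X$; hence Proposition~\ref{prop-cond-on-ext} will be applicable to $\wh X$ itself.

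Next, I would compute the one-step generating function $\wh f$ of $\wh X$ by plugging $F(x) = \prod_j q_j^{x_j}$ into~\eqref{eqn-conjugate-def}, which gives $\wh f_k(q) = A_k^{-1} f_k(A_1 q_1,\ldots,A_d q_d)$. Setting $q_j = 1/A_j$ and using $f(1,\ldots,1) = 1$, which holds because $X$ has extinction probability one from every state, yields $\wh f(1/A) = 1/A$, so $1/A \in (0,1]^d$ is a fixed point of $\wh f$ in the unit cube.

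It remains to identify this fixed point with the $\wh X$-extinction probability vector $\wh a = (\wh a_1,\ldots,\wh a_d)$; once this is done the claim follows directly by substituting $\wh a = 1/A$ into~\eqref{eqn-cond-on-ext} applied to $\wh X$, since the factors $\prod_j A_j^{X_j(1)}$ coming from the definition of $\wh{\BB E}_k$ cancel against $\prod_j \wh a_j^{X_j(1)} = \prod_j A_j^{-X_j(1)}$, leaving $\wh{\BB E}_k[F(X(1))\mid \wh E] = \BB E_k[F(X(1))]$, and the branching property upgrades this equality of one-step offspring distributions to equality of the full laws of the processes. By Proposition~\ref{prop-cond-on-ext} applied to $\wh X$, the only fixed points of $\wh f$ in $[0,1]^d$ are $\wh a$ and $(1,\ldots,1)$. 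In the degenerate case $A = (1,\ldots,1)$ one has $\wh X = X$ and $\wh a = 1/A$ trivially. Otherwise some coordinate $A_j > 1$, and I would use the identity $f_k^{(n)}(A) = \BB E_k[\prod_\ell A_\ell^{X_\ell(n)}] = A_k$ combined with positive regularity of $X$ to conclude that every $A_k > 1$: choose $N$ with $\BB P_k[X_j(N) \geq 1] > 0$ and observe that the expectation above is then strictly greater than $1$, forcing $A_k > 1$. Hence $1/A \in (0,1)^d$, so it is distinct from $(1,\ldots,1)$ and must equal $\wh a$. This final identification of $\wh a$ with $1/A$ is the main obstacle, and it is precisely the step where positive regularity of $X$ is essential, in order to rule out the ``mixed'' situation in which some but not all coordinates of $A$ equal $1$.
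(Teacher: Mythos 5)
Your proof is correct and follows essentially the same route as the paper's: verify that~\eqref{eqn-conjugate-def} defines a proper offspring distribution (total mass $f_k(A)/A_k=1$), note that the reweighting is mutually absolutely continuous so $\wh X$ inherits positive regularity and non-singularity, compute the generating function identity $\wh f_k(q) = A_k^{-1} f_k(A_1 q_1,\ldots,A_d q_d)$ to get $\wh f(1/A)=1/A$, and then invoke Proposition~\ref{prop-cond-on-ext} applied to $\wh X$. The only point where you go beyond what the paper writes is in explicitly ruling out the possibility that the fixed point $1/A$ coincides with $(1,\ldots,1)$ rather than the extinction vector $\wh a$, using positive regularity of $X$ to force all coordinates of $A$ strictly above $1$ once any single one is; this is essentially the content of Lemma~\ref{lem-fixed-pos}, which the paper states separately but does not explicitly cite in the proof of the proposition. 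You have filled in a small gap the paper left implicit, but the underlying argument is the same.
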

\begin{proof}
Since $f(A) =A$, we see that~\eqref{eqn-conjugate-def} defines the offspring distribution for a branching process $\wh X$. Since the law of the first $n$ generations of $\wh X$ is mutually absolutely continuous with respect to the law of the first $n$ generations of $X$, we obtain that $\wh X$ is also positive regular and non-singular. 
Let $\wh f  $ be the generating function for $\wh X$ and write $1/A = (1/A_1,\dots,1/A_k) \in [0,1]^d$. By~\eqref{eqn-conjugate-def}, $\wh f(1/A) = 1/A$. By Proposition~\ref{prop-cond-on-ext}, it follows that $1/A$ is the vector of extinction probabilities for $\wh X$ and that the conditional law of $\wh X$ given extinction is the same as the law of $X$. 
\end{proof}

Our next lemma implies in particular that in the setting of Proposition~\ref{prop-conjugate}, the fixed point $A$ must either be equal to $(1,\dots,1)$ or be in $(1,\infty)^d$.

\begin{lem} \label{lem-fixed-pos}
Consider a $d$-type branching process $X = \{X(n)\}_{n\in\BB N_0}$ for which the extinction probability started from any state is one. Let $f$ be the corresponding generating function and let $q \in (0,\infty)^d$ solve $f(q) = q$. Then   
\eqb \label{eqn-fixed-pos-geq}
q_k \geq 1, \quad \forall k \in \{1, \dots, d\}  .
\eqe 
Moreover, if the branching process is positive regular, then either $q_k = 1$ for every $k\in \{1,\dots,d\}$ or $q_k > 1$ for every $k \in \{1,\dots,d\}$.
\end{lem}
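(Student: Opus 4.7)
The plan is to prove the two assertions in sequence, both via the coordinatewise monotonicity of $f$ on $[0,\infty]^d$, which is immediate since each $f_k$ is a power series in the $q_j$'s with nonnegative coefficients.

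For the first assertion, I would introduce the truncated vector $q^\ast \in (0,1]^d$ defined by $q^\ast_k := \min(q_k,1)$. Since $q^\ast \leq q$ and $q^\ast \leq (1,\dots,1)$ coordinatewise, monotonicity gives both $f(q^\ast) \leq f(q) = q$ and $f(q^\ast) \leq f(1,\dots,1) = (1,\dots,1)$, hence $f(q^\ast) \leq q^\ast$. Iterating, $f^{(n)}(q^\ast) \leq q^\ast$ for every $n$. On the other hand, the hypothesis that the extinction probability is one from every starting state translates to $f^{(n)}(0) \to (1,\dots,1)$, and since $0 \leq q^\ast \leq (1,\dots,1)$ the monotone sandwich $f^{(n)}(0) \leq f^{(n)}(q^\ast) \leq (1,\dots,1)$ forces $f^{(n)}(q^\ast) \to (1,\dots,1)$. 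Combining, $(1,\dots,1) \leq q^\ast$, which together with the definition of $q^\ast$ yields $q^\ast = (1,\dots,1)$, i.e., $q_k \geq 1$ for every $k$.

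For the second assertion, assume positive regularity and suppose some coordinate $q_k$ equals $1$. Iterating the fixed-point relation $N$ times, where $N$ is as in Definition~\ref{def-bp-basic}, gives
\[
1 = q_k = f_k^{(N)}(q) = \BB E_k\!\left[\prod_{j=1}^d q_j^{X_j(N)}\right].
\]
By the first assertion every $q_j \geq 1$, so the integrand is at least $1$ $\BB P_k$-almost surely; a random variable bounded below by $1$ whose expectation equals $1$ must equal $1$ a.s., so $\prod_{j=1}^d q_j^{X_j(N)} = 1$ $\BB P_k$-almost surely. If $q_j > 1$ for some $j$, this would force $X_j(N) = 0$ $\BB P_k$-a.s., contradicting $\BB P_k[X_j(N) \geq 1] > 0$ from positive regularity. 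Hence whenever one coordinate of $q$ equals $1$, all coordinates must equal $1$. The argument faces no serious obstacle; the only subtlety is that $q$ may have coordinates strictly exceeding $1$, which is why the initial truncation to $q^\ast$ is needed so that iteration lands in the region where $f^{(n)}$ is known to converge to $(1,\dots,1)$ via the standard identification of $f^{(n)}(0)$ with the extinction probabilities.
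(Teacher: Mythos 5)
Your proof is correct, and the first assertion is argued by a genuinely different route than the paper. The paper works directly with the fixed point $q$: since extinction is a.s., $\prod_j q_j^{X_j(n)} \to 1$ a.s., and Fatou's lemma applied to $q_k = f_k^{(n)}(q) = \BB E_k[\prod_j q_j^{X_j(n)}]$ gives $q_k \geq 1$ in the limit. You instead truncate to $q^\ast = \min(q,\mathbf 1)$, use coordinatewise monotonicity to pin $f^{(n)}(q^\ast)$ between $f^{(n)}(0)$ (which converges to the all-ones extinction vector) and $q^\ast$ itself, and conclude $q^\ast = \mathbf 1$. Both are short and elementary; your approach avoids Fatou and stays entirely within the standard monotone-iteration toolkit for generating functions, at the mild cost of having to introduce the auxiliary truncated point and verify that it is sub-invariant under $f$. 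For the second assertion, you prove the contrapositive of what the paper proves: the paper assumes some $q_{j_\ast}>1$ and shows every $q_k>1$ via the lower bound $f_k^{(N)}(q) \geq q_{j_\ast}\,\BB P_k[X_{j_\ast}(N)\geq 1] + (1-\BB P_k[X_{j_\ast}(N)\geq 1])$, whereas you assume some $q_k=1$ and use the fact that a nonnegative random variable bounded below by $1$ with expectation $1$ must equal $1$ a.s., then invoke positive regularity to rule out any coordinate of $q$ exceeding $1$. These are logically equivalent and both rest on the same positive-regularity input; your version is perhaps marginally cleaner since it avoids the explicit two-term lower bound. No gaps.
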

\begin{proof} 
Let $q \in (0,\infty)^d$ for which $f(q) = q$. 
By our extinction hypothesis, for every $k \in   \{1,\dots,d\}$, it holds $\BB P_k$-a.s.\ that  $\lim_{n\to\infty} \prod_{j=1}^d q_j^{X_j(n)} = 1$. 
We have $f^{(n)}(q) = q$ for every $n\in\BB N$. 
Therefore, we can use Fatou's lemma to get that for every $k \in \{1,\dots,d\}$, 
\alb
q_k 
= \lim_{n\to\infty} f_k^{(n)}(q) 
= \lim_{n\to\infty} \BB E_k\left[ \prod_{j=1}^d  q_j^{X_j(n)} \right] 
\geq \BB E_k\left[ \lim_{n\to\infty} \prod_{j=1}^d  q_j^{X_j(n)} \right] 
= 1 .
\ale
Thus,~\eqref{eqn-fixed-pos-geq} holds. 

Now assume that the branching process is positive regular and there exists $j_* \in \{1,\dots,d\}$ such that $q_{j_*} > 1$.  
Let $N\in\BB N$ be such that $\BB P_k[X_{j_*}(N) \geq 1 ] > 0$ for every $k\in \{1,\dots,d\}$. 
Then for every $k\in \{1,\dots,d\}$, 
\alb
q_k 
= f_k^{(N)}(q) 
= \BB E_k\left[ \prod_{j=1}^d  q_j^{X_j(N)} \right] 
\geq q_{j_*} \BB P_k\left[ X_{j_*}(N) \geq 1 \right] + \left( 1 - \BB P_k\left[ X_{j_*}(N) \geq 1 \right] \right)  
 > 1 
\ale
since $q_{j_*} > 1$ and $ \BB P_k\left[ X_{j_*}(N) \geq 1 \right] > 0$.
\end{proof}

The following elementary consequence of positive regularity and non-singularity will be used frequently in our proofs.

\begin{lem} \label{lem-gf-pos}
Let $X = \{X(n)\}_{n\in\BB N_0}$ be a $d$-type branching process which is positive regular and non-singular (Definition~\ref{def-bp-basic}). There exists $N\in\BB N$ such that for each $k,j  \in \{1,\dots,d\}$, 
\eqb \label{eqn-gf-pos} 
p_{k,j}(N) > 0 \quad \text{where} \quad
p_{k,j}(N) := 
\begin{cases}
\BB P_k\left[X_j(N) \geq 1 \: \text{and} \: X_k(N) \geq 1   \right]  ,\quad & k \not= j  \\
\BB P_k\left[X_k(N) \geq 2   \right]  ,\quad & k = j  . 
\end{cases}  
\eqe 
\end{lem}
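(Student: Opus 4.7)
The plan is to combine non-singularity and positive regularity by constructing a concrete genealogical trajectory of positive probability for each target pair $(k,j)$. I will take $N := 2N_0 + 1$, where $N_0$ is a positive-regularity witness chosen below.

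First, I will use non-singularity to pick a type $k_*$ with $\BB P_{k_*}[\sum_i X_i(1) \geq 2] > 0$. By $\sigma$-additivity over the countable set of offspring configurations, there is a specific configuration $(n_1,\dots,n_d)$ with $\sum_i n_i \geq 2$ and $\BB P_{k_*}[X(1) = (n_1,\dots,n_d)] > 0$, and hence types $\ell_1,\ell_2 \in \{1,\dots,d\}$ (possibly equal) such that $(n_1,\dots,n_d)$ contains, as two distinct individuals, one of type $\ell_1$ and one of type $\ell_2$ (concretely: either some $n_i \geq 2$, take $\ell_1=\ell_2=i$; or two distinct coordinates $n_i, n_{i'} \geq 1$, take $\ell_1=i,\ell_2=i'$). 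Second, I will invoke positive regularity to fix $N_0 \in \BB N$ such that $\BB P_i[X_\ell(N_0) \geq 1] > 0$ for all $i,\ell \in \{1,\dots,d\}$.

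For a fixed target pair $(k,j)$, set $j^* := k$ if $k = j$ and $j^* := j$ otherwise. I will exhibit a $\BB P_k$-positive event via three stages:
\begin{enumerate}
\item Starting from the root of type $k$, there is at least one type-$k_*$ descendant $u$ at generation $N_0$.
\item The individual $u$ has offspring configuration exactly $(n_1,\dots,n_d)$ at generation $N_0+1$, from which I name two distinct children $v_1,v_2$ of types $\ell_1$ and $\ell_2$ respectively.
\item The subtree rooted at $v_1$ contains a type-$k$ descendant $N_0$ generations after its root, and the subtree rooted at $v_2$ contains a type-$j^*$ descendant $N_0$ generations after its root.
\end{enumerate}
Conditioning successively, each stage has positive probability (stages 1 and 3 by the positive-regularity choice of $N_0$; stage 2 by the definition of $k_*, (n_1,\dots,n_d), \ell_1, \ell_2$), and the branching property gives that the two subtrees in stage 3 are independent given stage 2. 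Hence the joint event has positive $\BB P_k$-probability. On it, at generation $N = 2N_0 + 1$, either there are at least two distinct type-$k$ individuals (the $k=j$ case, giving $X_k(N) \geq 2$) or there is at least one type-$k$ and one type-$j$ individual (the $k \neq j$ case), yielding $p_{k,j}(N) > 0$ in both cases.

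The only subtle point is stage 2: non-singularity by itself gives a type $k_*$ with positive probability of $\geq 2$ total offspring, but I need two trackable children of \emph{specified} types so that the subtrees in stage 3 can be analyzed independently. The $\sigma$-additivity extraction of a fixed offspring configuration is what makes this work; the remainder is bookkeeping with the branching property and the uniform generation count $N = 2N_0 + 1$.
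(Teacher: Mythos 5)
Your proof is correct and takes essentially the same route as the paper: use positive regularity to reach a non-singular type in $N_0$ steps, have it branch into two individuals, and then use positive regularity again on each branch, yielding $N = 2N_0+1$. Your extraction of a specific offspring configuration via $\sigma$-additivity is a small amount of extra rigor that the paper glosses over, but the underlying argument is identical.
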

\begin{proof} 
By positive regularity, there exists $N_0 \in \BB N$ such that 
\eqbn
\BB P_k\left[ X_j(N_0) \geq 1 \right] > 0 ,\quad\forall k,j \in \{1,\dots,d\}  . 
\eqen
By non-singularity, there exists $k_0  \in  \{1,\dots,d\}$ such that each individual of type $k_0$ has a positive chance to have at least two offspring. Now let $k, j \in \{1,\dots,d\}$ be arbitrary. There is a positive $\BB P_k$-probability that we have at least one individual of type $k_0$ in the $N_0$th generation. Conditional on this, there is a positive chance that we have at least two individuals (of any type) in the $N_0+1$th generation. Conditional on this, there is a positive chance that we have at at least one individual of type $k$ and one individual of type $j$ (if $j\not=k$) or at least two individuals of type $k$ (if $k=j$) in the $2N_0+1$th generation. Thus the lemma statement holds with $N  =2N_0 +1$. 
\end{proof}

Finally, we need the following statement, which follows from the hypotheses of our main theorem.

\begin{lem} \label{lem-gf-bded}
Assume the hypotheses of Theorem~\ref{thm-fixed-gf}. 
There exists $q^0 \in (1,\infty)^d$ such that 
\eqbn
f_k(q^0) < q^0_k ,\quad\forall k\in \{1,\dots,d\}. 
\eqen
\end{lem}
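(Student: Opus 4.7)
The plan is to take $q^0$ of the form $\mathbf{1} + tv$, where $\mathbf{1} := (1,\dots,1)$, $v \in (0,\infty)^d$ is a suitably chosen direction, and $t > 0$ is small. The strategy is to Taylor expand $f$ about $\mathbf{1}$ and use subcriticality to make the first-order correction strictly smaller than $tv$ componentwise.

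To pin down $v$, I will apply the Perron--Frobenius theorem to the mean matrix $M$. By positive regularity (Hypothesis~\ref{item-gf-pos}), $M^N$ has strictly positive entries for some $N$, i.e., $M$ is primitive. Perron--Frobenius then yields a strictly positive right eigenvector $v \in (0,\infty)^d$ of $M$ whose eigenvalue is the spectral radius $\rho$, which is strictly less than $1$ by Hypothesis~\ref{item-gf-extinction}.

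Next I justify a Taylor expansion of $f$ at $\mathbf{1}$. Since each offspring distribution is a probability measure, $f_k(\mathbf{1}) = 1$, and by Hypothesis~\ref{item-gf-cont} there exists $\epsilon > 0$ such that $f_k(q) < \infty$ for every $q \in [1,1+\epsilon]^d$ and every $k$. Since $f_k$ is a power series in $q$ with nonnegative coefficients, convergence at $(1+\epsilon,\dots,1+\epsilon)$ implies absolute convergence throughout the complex polydisc of polyradius $1+\epsilon$ about the origin, hence real-analyticity in a neighborhood of $\mathbf{1}$. Term-by-term differentiation gives $\partial_j f_k(\mathbf{1}) = M_{k,j}$, so the first-order expansion reads
\eqbn
f_k(\mathbf{1} + tv) = 1 + t \sum_{j=1}^d M_{k,j} v_j + O(t^2) = 1 + t\rho v_k + O(t^2),
\eqen
with the error uniform over the finite index set $k \in \{1,\dots,d\}$. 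Since $\rho v_k < v_k$ and there are only finitely many $k$, for all sufficiently small $t > 0$ we obtain $f_k(\mathbf{1} + tv) < 1 + tv_k$ for every $k$, and the choice $q^0 := \mathbf{1} + tv$ (which lies in $(1,\infty)^d$) satisfies the conclusion.

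The main delicacy is upgrading Hypothesis~\ref{item-gf-cont} (continuity of $f$ into $(0,\infty]$) to enough smoothness to justify the Taylor expansion. This is settled by the classical fact that a multivariate power series with nonnegative coefficients which converges at one point of the positive orthant converges absolutely throughout the associated polydisc; without this observation one would have to work harder near points where $f$ may be infinite. Everything else in the argument is standard Perron--Frobenius plus a first-order Taylor expansion.
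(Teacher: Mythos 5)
Your proof is correct and follows essentially the same approach as the paper: both arguments take $q^0 = \mathbf{1} + tv$ where $v$ is the Perron--Frobenius right eigenvector of the mean matrix $M$, Taylor-expand $f$ about $\mathbf{1}$ using $Mv = \rho v$ with $\rho < 1$, and conclude for small $t$. The one place you go beyond the paper is in spelling out why the Taylor expansion is legitimate (nonnegative power-series coefficients plus finiteness near $\mathbf{1}$ yield analyticity), a point the paper leaves implicit; this is a reasonable addition but not a different method.
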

\begin{proof}
Recall the matrix $M$ from~\eqref{eqn-mean-matrix}. Let $\rho$ be the largest eigenvalue of $M$, which is strictly less than one by our subcriticality hypothesis~\ref{item-gf-extinction}. Also let $v$ be the corresponding eigenvector, normalized so that $|v|=1$. Since we have assumed that our branching process is positive regular and non-singular, the Perron-Frobenius theorem implies that each of the $d$ components of $v$ is positive. 

Write $\bd 1 = (1,\dots,1)$ for the vector of $d$ ones. By our continuity hypothesis~\ref{item-gf-cont} and since $f(\bd 1) = \bd 1$, the function $f$ is finite on a neighborhood of $\bd 1$. Furthermore, $M$ is the Jacobian matrix of $f$ at $\bd 1$. By Taylor expansion, for $\ep > 0$, we have 
\eqbn
f(\bd{1} + \ep v) = \bd{1} + \rho \ep v + O(\ep^2) ,\quad \text{as $\ep \to 0$}. 
\eqen 
Since $\rho < 1$, this implies the lemma statement with $q^0 = \bd 1 + \ep v$ for a small enough choice of $\ep>0$. 
\end{proof}

\subsection{Monotonicity properties}
\label{sec-mono}

Throughout the rest of this section, we assume that we are in the setting of Theorem~\ref{thm-fixed-gf}. 
In this section, we will establish a number of monotonicity properties for the generating function which will play an important role in our proofs. 
 
Instead of working with the generating function $f$, for most of the proof it is more convenient to work with the closely related function $g$ defined as follows. 

\begin{defn} \label{def-log-gf}
For $\lambda = (\lambda_1,\dots,\lambda_d ) \in (-\infty,\infty]^d$, define 
\eqbn
g_k(\lambda) := \log f_k(e^{\lambda_1} , \dots, e^{\lambda_d} ) \quad \text{and} \quad 
g(\lambda) := (g_1(\lambda),\dots,g_d(\lambda)) .
\eqen
We also let $g^{(n)}$ be the composition of $g$ with itself $n$ times. 
Then, using~\eqref{eqn-gen-func-n}, we see that the $k$th component of $g^{(n)}$ is given by
\eqb \label{eqn-convex-func-n} 
g_k^{(n)}(\lambda) 
= \log f_k^{(n)}(e^{\lambda_1},\dots,e^{\lambda_d})
= \log \BB E_k\left[ \prod_{j=1}^d e^{\lambda_j X_j(n)} \right] .
\eqe 
\end{defn}

We observe that 
\eqb \label{eqn-fixed-pt-equiv}
g(\lambda_1,\dots,\lambda_d) = (\lambda_1,\dots,\lambda_d) \quad \text{if and only if} \quad
f(e^{\lambda_1},\dots,e^{\lambda_d}) = (e^{\lambda_1},\dots, e^{\lambda_d}) .
\eqe
Therefore, Theorem~\ref{thm-fixed-gf} is equivalent to the following statement.

\begin{thm} \label{thm-fixed}
There exists $\lambda \in (0,\infty)^d $ such that $g(\lambda) = \lambda$.
\end{thm}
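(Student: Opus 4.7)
The plan is to prove Theorem~\ref{thm-fixed} via a topological fixed-point argument in the logarithmic coordinates $\lambda = \log q$, ultimately invoking the Brouwer fixed point theorem as outlined in Section~\ref{sec-outline}. The central objects are the two basins
\begin{align*}
S_0 &:= \left\{ \lambda \in [0,\infty]^d : \lim_{n\to\infty} g^{(n)}(\lambda) = 0 \right\}, \\
S_\infty &:= \left\{ \lambda \in [0,\infty]^d : \limsup_{n\to\infty} |g^{(n)}(\lambda)| = \infty \right\},
\end{align*}
together with the compact ``neutral'' set $\Lambda := [0,\infty)^d \setminus (S_0 \cup S_\infty)$. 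Since $g$ is coordinatewise non-decreasing (the coefficients of $f$ in $q$ are nonnegative), all three sets are forward-invariant under $g$; in particular $g(\Lambda) \subseteq \Lambda$. The first substantive step is to show that $S_0$ and $S_\infty$ are relatively open in $[0,\infty]^d$. For $S_0$: Lemma~\ref{lem-gf-bded} supplies $\lambda^0 \in (0,\infty)^d$ with $g(\lambda^0) < \lambda^0$ coordinatewise, so the box $[0,\lambda^0]$ is forward-invariant with iterates monotonely converging to $0$, and hence $\lambda \in S_0$ if and only if $g^{(n)}(\lambda) \leq \lambda^0$ for some $n$---an open condition by the continuity hypothesis~\ref{item-gf-cont}. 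For $S_\infty$: divergence is witnessed at a finite stage $n$ by $g^{(n)}(\lambda)$ exceeding a threshold in some coordinate, again open by continuity, and the half-open interval topology on $(0,\infty]$ is what handles boundary points with some $\lambda_j = \infty$.

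The next step is the scaling-monotonicity of $g$: for $\lambda \in [0,\infty)^d \setminus \{0\}$ and $t > 1$, one has $g(t\lambda) \geq t g(\lambda)$ coordinatewise, strictly so in every coordinate after passing to a sufficiently large iterate $g^{(N)}$. This follows from convexity of the cumulant generating function $g$ combined with $g(0) = 0$, with the strict inequality in every coordinate coming from Lemma~\ref{lem-gf-pos} (non-degeneracy of the distribution of $X(N)$ across all type-pairs). Working in the log-coordinates is essential here, since the corresponding property fails for $f$ itself---this is precisely the point of Lemma~\ref{lem-unstable-scaled}. Combining scaling-monotonicity with openness of the basins, I prove the ray lemma: every ray $\{t\mu : t \geq 0\}$ with $\mu \in (0,\infty)^d$ meets $\Lambda$ in exactly one point. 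Small $t$ places the ray near $0 \in S_0$; iterated scaling-monotonicity forces large $t$ into $S_\infty$; connectedness of the ray then gives existence. Uniqueness follows because scaling-monotonicity implies that once one leaves $S_0$ along the ray one cannot re-enter it, and one immediately enters $S_\infty$. The radial map $\lambda \mapsto \lambda / \sum_j \lambda_j$ then identifies $\Lambda \cap (0,\infty)^d$ homeomorphically with the open $(d-1)$-simplex.

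To conclude, I construct a small collar neighborhood $A$ of $\Lambda$ using the ray parametrization: since $\Lambda$ is a radial graph over the simplex, $A$ is homeomorphic to $\Lambda \times [-\eps,\eps]$, hence to a closed topological $d$-disk, with $\Lambda$ as a deformation retract (Lemma~\ref{lem-retract}). The map $g$ sends $\Lambda$ into $\Lambda$ and, by continuity, sends a neighborhood of $\Lambda$ into $A$; composing with the deformation retraction yields a continuous self-map of $A$, to which Brouwer applies and produces a fixed point $\lambda^*$. Because $g(\Lambda) \subseteq \Lambda$ and the retraction fixes $\Lambda$ pointwise, $\lambda^*$ in fact lies in $\Lambda$ and satisfies $g(\lambda^*) = \lambda^*$. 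Finally, $\lambda^* \neq 0$ since $0 \in S_0$ but $\lambda^* \in \Lambda$, so Lemma~\ref{lem-fixed-pos} applied to $q = e^{\lambda^*}$ forces $\lambda^* \in (0,\infty)^d$, as required. The main obstacle is the scaling-monotonicity/ray lemma combination: obtaining a strict convexity statement strong enough to rule out a second ray intersection is delicate in the multi-type setting and is precisely where positive regularity and non-singularity enter. A secondary subtlety is verifying that $\Lambda$ together with its collar genuinely carries the topology of a closed disk, particularly near the boundary faces of the simplex where some direction components vanish and the generating function may take values involving $\infty$.
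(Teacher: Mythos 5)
Your overall strategy mirrors the paper's: define the basins $S_0$ and $S_\infty$, show they are open, prove the ray lemma, build a collar neighborhood homeomorphic to a disk with $\Lambda$ as a deformation retract, and apply Brouwer to $g\circ h$. The final Brouwer step and the reduction to Lemma~\ref{lem-fixed-pos} are correct, and you have correctly identified the ray lemma as the delicate point. However, your proposed proof of the ray lemma has a genuine gap.

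You claim that the uniqueness of the ray intersection follows from ``scaling-monotonicity'', namely $g(t\lambda)\geq t\,g(\lambda)$ for $t>1$, derived from convexity of $g$ together with $g(0)=0$, sharpened to a strict inequality after a large iterate using Lemma~\ref{lem-gf-pos}. This inequality is correct, and it propagates to iterates (one gets $g^{(n)}(t\lambda)\geq t\,g^{(n)}(\lambda)$ by alternating monotonicity and convexity). But it only yields a \emph{bounded} multiplicative gain by a fixed factor $t$, which is not enough. Concretely, suppose $r\mu$ and $R\mu$ both lie in $\Lambda$ with $r<R$. Then $\limsup_n |g^{(n)}(R\mu)|=L<\infty$ and $\limsup_n|g^{(n)}(r\mu)|>0$. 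The convexity inequality gives $g^{(n)}(r\mu)\leq (r/R)\,g^{(n)}(R\mu)$, hence $\limsup_n|g^{(n)}(r\mu)|\leq (r/R)L<\infty$. That rules out $r\mu\in S_\infty$, which you already knew, but it does \emph{not} force $\lim_n|g^{(n)}(r\mu)|=0$. Nothing in the convexity argument excludes the $\limsup$ staying pinned at some positive value strictly below $(r/R)L$, i.e., an entire radial interval sitting inside $\Lambda$. Strictness of the inequality after $N$ iterations does not cure this, since it does not accumulate: iterating the argument from $g^{(N)}$ onward still gives only the single factor $t$, not $t$ to a growing power. The same difficulty is why the paper explicitly remarks that convexity of $g$ is not used in the proofs.

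The paper's Lemma~\ref{lem-unstable-scaled} does strictly more than your scaling-monotonicity: it asserts a genuine $0/\infty$ dichotomy, that $\limsup_n|g^{(n)}(\lambda)|<\infty$ forces $\lim_n|g^{(n)}(r\lambda)|=0$ for every $r<1$, and its proof is probabilistic rather than convex-analytic. It uses the extinction hypothesis to get $Z_n:=\prod_j e^{r\lambda_j X_j(n)}\to 1$ almost surely, H\"older's inequality with exponents $1/r$ and $1/(1-r)$ to bound $\BB E_k[Z_n\,\BB 1_{Z_n>1}]$ in terms of $\exp(r\,g_k^{(n)}(\lambda))\,\BB P_k[Z_n>1]^{1-r}$, and the Vitali convergence theorem to upgrade to $\BB E_k[Z_n]\to 1$. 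This is the ingredient you need for the ray lemma, and it cannot be replaced by the inequality $g(r\lambda)\leq r\,g(\lambda)$. You should also replace the non-strict condition ``$g^{(n)}(\lambda)\leq\lambda^0$ for some $n$'' in your openness argument for $S_0$ by a strict one (e.g.\ $g^{(n)}(\lambda)\in[0,\ep)^d$), since the non-strict condition is closed rather than open; this is a minor fix, unlike the ray-lemma issue.
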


Although it will not be needed for our proofs, we remark that it is easy to see from H\"older's inequality that each $g_k$ is convex.

We will make frequent use of the following notation for componentwise inequalities.

\begin{defn} \label{def-componentwise}
For $\lambda,\eta\in (-\infty,\infty]^d$, we write $\lambda \leq \eta$ if $\lambda_j \leq \eta_j$ for each $j \in \{1,\dots,d\}$. We write $\lambda < \eta$ if $\lambda \leq \eta$ and there exists $j\in \{1,\dots,d\}$ such that $\lambda_j < \eta_j$. 
\end{defn}

The following elementary fact will play a fundamental role in our proofs. 

\begin{lem}[Monotonicity] \label{lem-mono}
Let $\lambda,\eta \in [0,\infty)^d$ such that $\lambda \leq \eta$.
Then, with $g^{(n)}$ as in Definition~\ref{def-log-gf}, we have $g_k^{(n)}(\lambda) \leq g_k^{(n)}(\eta)$ for each $k \in \{1,\dots,d\}$ and each $n\in\BB N$.
Moreover, there exists $N\in\BB N$ such that if $\lambda < \eta$, then in fact $g_k^{(n)}(\lambda) < g_k^{(n)}(\eta)$ for each $k\in \{1,\dots,d\}$ and each $n \geq N$ such that $|g^{(n)}(\eta)|<\infty$. 
\end{lem}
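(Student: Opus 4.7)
The plan is to work from the probabilistic representation~\eqref{eqn-convex-func-n},
\begin{equation*}
g_k^{(n)}(\lambda) = \log \BB E_k\left[ \prod_{j=1}^d e^{\lambda_j X_j(n)} \right],
\end{equation*}
and to read off both claims from pathwise monotonicity of the integrand, combined with a positivity statement for the marginals of $X(n)$.

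For the weak part, if $\lambda \leq \eta$ in $[0,\infty)^d$ then $\prod_j e^{\lambda_j X_j(n)} \leq \prod_j e^{\eta_j X_j(n)}$ pathwise, since each $X_j(n) \geq 0$. Both $\BB E_k$-expectations are well-defined in $(0,\infty]$ and the inequality is preserved under expectation; applying $\log$, which is monotone on $(0,\infty]$ with the convention $\log\infty = \infty$, yields $g_k^{(n)}(\lambda) \leq g_k^{(n)}(\eta)$ for every $k$ and every $n$.

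For the strict part, fix $j_* \in \{1,\dots,d\}$ with $\lambda_{j_*} < \eta_{j_*}$. The first step is to establish that there is an integer $N$, depending only on the branching process, such that
\begin{equation*}
\BB P_k\bigl[X_{j_*}(n) \geq 1\bigr] > 0 \qquad \text{for every } k \in \{1,\dots,d\} \text{ and every } n \geq N .
\end{equation*}
Granting this, on the positive-probability event $\{X_{j_*}(n) \geq 1\}$ the comparison $\prod_j e^{\eta_j X_j(n)} \geq \prod_j e^{\lambda_j X_j(n)}$ becomes strict. Under the hypothesis $|g^{(n)}(\eta)| < \infty$ both expectations are finite (the $\lambda$-expectation is bounded above by the $\eta$-expectation via the weak part), so the strict pathwise inequality on a positive-measure event lifts to a strict inequality of expectations, and taking $\log$ preserves strict inequality on $(0,\infty)$.

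The main obstacle is the persistence statement $\BB P_k[X_{j_*}(n) \geq 1] > 0$ for \emph{all} $n \geq N$, since positive regularity as stated only guarantees this at one particular generation. I plan to handle it at the level of the mean matrix $M$ from~\eqref{eqn-mean-matrix}. Positive regularity provides some $N_0$ for which $M^{N_0}$ is entrywise positive. Moreover, every row of $M$ must contain at least one positive entry: otherwise some type would have no offspring almost surely, and positive regularity would fail. Writing $M^{n+1} = M \cdot M^n$ and combining these two observations inductively shows that $M^n$ is entrywise positive for every $n \geq N_0$. Then $\BB E_k[X_j(n)] = (M^n)_{k,j} > 0$ forces $\BB P_k[X_j(n) \geq 1] > 0$, and $N = N_0$ does the job.
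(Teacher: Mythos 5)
Your proof is correct and follows essentially the same two-step structure as the paper: the weak inequality is read off pathwise, and the strict inequality hinges on showing $\BB P_k[X_{j_*}(n)\geq 1]>0$ for all $n\geq N$. Where the paper dispatches this persistence claim with a terse appeal to the Markov property, your mean-matrix induction ($M^{n+1}=M\cdot M^n$ together with positivity of $M^{N_0}$ and every row of $M$) is a clean, explicit way to establish exactly the same fact, and is if anything slightly more self-contained.
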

\begin{proof}
The statement that $g_k^{(n)}(\lambda) \leq g_k^{(n)}(\eta)$ for each $k \in \{1,\dots,d\}$ is immediate from the definition of $g^{(n)}$. 

To get the statement with strict inequality, let $j_* \in \{1,\dots,d\}$ be such that $\lambda_{j_*} < \eta_{j_*}$. By positive regularity (Definition~\ref{def-bp-basic}), there exists $N\in\BB N$ such that for each $k , j\in \{1,\dots,d\}$, we have $\BB P_k\left[ X_j(N) \geq 1 \right] > 0$. From this and the Markov property of the branching process, in fact $\BB P_k\left[ X_j(n) \geq 1 \right] > 0$ for each $n \geq N$ and each $k,j\in \{1,\dots,d\}$. 
Therefore, for $n\geq N$, it holds with positive $\BB P_k$-probability that $\prod_{j=1}^d e^{\lambda_j X_j(n)}  < \prod_{j=1}^d e^{\eta_j X_j(n)}$. By taking $\BB E_k$-expectations, we get $g_k^{(n)}(\lambda) < g_k^{(n)}(\eta)$, provided $g_k^{(n)}(\eta)$ is finite.  
\end{proof}

We next show that the set of points $\lambda \in [0,\infty)^d\setminus \{0\}$ for which $|g^{(n)}(\lambda)|$ remains bounded away from 0 and $\infty$ as $n\to\infty$ is ``unstable'', in the sense that slightly increasing or decreasing the components of a point in this set will take us out of the set.

\begin{prop} \label{prop-unstable}
\begin{enumerate}[$(i)$]
\item Let $\lambda \in [0,\infty)^d \setminus \{0\}$ and assume that $\limsup_{n\to\infty} | g^{(n)}(\lambda)| < \infty$. 
If $\eta \in [0,\infty)^d$ such that $\eta < \lambda$, then $\lim_{n\to\infty} |g^{(n)}(\eta)| = 0$.  \label{item-unstable-zero}
\item Let $\lambda \in [0,\infty)^d \setminus \{0\}$ and assume that $\limsup_{n\to\infty} | g^{(n)}(\lambda)| > 0 $. 
If $\eta \in [0,\infty)^d$ such that $\eta > \lambda$, then $\limsup_{n\to\infty} |g^{(n)}(\eta)| = \infty$. \label{item-unstable-infty} 
\end{enumerate}
\end{prop}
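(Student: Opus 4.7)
The plan for part $(i)$ is to combine almost-sure extinction of the subcritical process with a uniform-integrability estimate for $\prod_j e^{\mu_j X_j(n)}$. By Hypothesis~\ref{item-gf-extinction}, $\BB E_k[\sum_j X_j(n)] = \sum_j M^n_{k,j}$ decays geometrically in $n$ (Perron--Frobenius eigenvalue $\rho<1$), so by Borel--Cantelli, $\sum_j X_j(n)=0$ for all sufficiently large $n$, $\BB P_k$-almost surely. Consequently, for every finite $\mu \in [0,\infty)^d$, $\prod_{j=1}^d e^{\mu_j X_j(n)} \to 1$ $\BB P_k$-a.s. The goal is to upgrade this to convergence in expectation under the hypothesis of part~$(i)$.

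First I reduce to the case of strict inequality in every coordinate. Put $\eta' := (\eta+\lambda)/2$, so that $\eta \leq \eta' < \lambda$ in the sense of Definition~\ref{def-componentwise} with $\eta'\ne\lambda$. Since $\sup_n|g^{(n)}(\lambda)|<\infty$, Lemma~\ref{lem-mono} applied to $(\eta',\lambda)$ yields some $N\in\BB N$ with $g^{(N)}(\eta') < g^{(N)}(\lambda)$ in every coordinate. Combined with $g^{(N)}(\eta)\leq g^{(N)}(\eta')$, the vectors $\mu:=g^{(N)}(\eta)$ and $\nu:=g^{(N)}(\lambda)$ lie in $[0,\infty)^d$ with $\mu_j<\nu_j$ for all $j$ and $C:=\sup_n \BB E_k[\prod_j e^{\nu_j X_j(n)}]<\infty$ for each $k$. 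The case $\mu=0$ is trivial; otherwise set $\mu_*:=\max_j \mu_j>0$ and $\delta_0:=\min_j(\nu_j-\mu_j)>0$, and let $Y_n:=\sum_j X_j(n)$. On the event $\{\prod_j e^{\mu_j X_j(n)}>K\}$ with $K>1$, one has $\mu_* Y_n \geq \sum_j \mu_j X_j(n) > \log K$, hence $e^{\delta_0 Y_n}>K^{\delta_0/\mu_*}$. Using $\mu_j+\delta_0\leq \nu_j$,
\[
\BB E_k\!\left[\prod_{j=1}^d e^{\mu_j X_j(n)} \mathbbm{1}_{\{\prod_j e^{\mu_j X_j(n)} > K\}}\right] \leq K^{-\delta_0/\mu_*}\,\BB E_k\!\left[\prod_{j=1}^d e^{(\mu_j+\delta_0)X_j(n)}\right] \leq C\,K^{-\delta_0/\mu_*},
\]
which vanishes as $K\to\infty$ uniformly in $n$. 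Combined with the a.s.\ convergence $\prod_j e^{\mu_j X_j(n)}\to 1$ from the first paragraph, uniform integrability gives $\BB E_k[\prod_j e^{\mu_j X_j(n)}] \to 1$, i.e.\ $g_k^{(n)}(\mu)\to 0$ for each $k$. Since $g^{(n)}(\eta)=g^{(n-N)}(\mu)$ for $n\geq N$, this yields $|g^{(n)}(\eta)|\to 0$.

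Part $(ii)$ then follows by contrapositive: if $\limsup_n |g^{(n)}(\eta)|<\infty$, then applying part~$(i)$ to the pair $\lambda<\eta$ gives $|g^{(n)}(\lambda)|\to 0$, contradicting $\limsup_n|g^{(n)}(\lambda)|>0$. The main obstacle in the above plan is the uniform-integrability step in part~$(i)$, which is the only place where both the strict coordinatewise inequality (needed to produce $\delta_0>0$) and the boundedness of $\BB E_k[\prod_j e^{\nu_j X_j(n)}]$ really come into play; the reduction via Lemma~\ref{lem-mono} and the contrapositive argument for part~$(ii)$ are then routine.
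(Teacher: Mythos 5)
Your proof is correct and follows essentially the same structure as the paper's: reduce to the case of strict coordinatewise inequality via Lemma~\ref{lem-mono}, establish uniform integrability of $\prod_j e^{\mu_j X_j(n)}$, invoke a.s.\ extinction, and conclude by Vitali; part~(ii) is handled by role-swapping just as in the paper. The only real difference is the mechanism for the uniform integrability bound: the paper isolates this in Lemma~\ref{lem-unstable-scaled}, observes that strict coordinatewise inequality gives $\eta\leq r\lambda$ for some $r\in[0,1)$, and applies H\"older with exponents $1/r$ and $1/(1-r)$; you instead work with the coordinatewise gap $\delta_0 = \min_j(\nu_j - \mu_j)$ and a Markov-type tail estimate using $e^{\delta_0 Y_n} > K^{\delta_0/\mu_*}$ on the overshoot event. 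Both routes are elementary and equivalent in strength; the paper's factoring of the scalar version into a separate lemma is motivated by the fact that Lemma~\ref{lem-unstable-scaled} is reused later (in Lemma~\ref{lem-ray-intersect}), so that version is the one worth recording. A small inefficiency: introducing the midpoint $\eta'=(\eta+\lambda)/2$ is unnecessary, since the strict-inequality clause of Lemma~\ref{lem-mono} already gives $g_k^{(N)}(\eta)<g_k^{(N)}(\lambda)$ in \emph{every} coordinate directly from $\eta<\lambda$.
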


The proof of Proposition~\ref{prop-unstable} is based on the following lemma, which is a straightforward consequence of the Vitali convergence theorem.

\begin{lem} \label{lem-unstable-scaled}
Let $\lambda \in [0,\infty)^d \setminus \{0\}$ and assume that $\limsup_{n\to\infty} | g^{(n)}(\lambda)| < \infty$. 
Then for any $r\in [0,1)$, we have $\lim_{n\to\infty} |g^{(n)}(r\lambda)| = 0$.  
\end{lem}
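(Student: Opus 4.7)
The plan is to apply the Vitali convergence theorem, as the lemma's title suggests. Unwinding the definitions, the hypothesis $\limsup_{n\to\infty} |g^{(n)}(\lambda)| < \infty$ says that there is a constant $C < \infty$ such that
\[
\BB E_k\!\left[ \prod_{j=1}^d e^{\lambda_j X_j(n)} \right] \leq C \quad \text{for all } n \in \BB N \text{ and all } k \in \{1,\dots,d\}.
\]
The target conclusion says that for $r \in [0,1)$, $\BB E_k[\prod_j e^{r\lambda_j X_j(n)}] \to 1$ for each $k$. The $r = 0$ case is immediate since $g^{(n)}(0) = 0$ for every $n$, so I focus on $r \in (0,1)$.

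Set $Y_n := \prod_{j=1}^d e^{r \lambda_j X_j(n)}$. I would first establish almost sure convergence $Y_n \to 1$. Since $X$ is subcritical (Hypothesis~\ref{item-gf-extinction}), the mean matrix $M$ has spectral radius $\rho < 1$, so $\BB E_k[X_j(n)] = (M^n)_{k,j}$ decays geometrically. Markov's inequality gives $\BB P_k[X_j(n) \geq 1] \leq (M^n)_{k,j}$, and summability in $n$ combined with the Borel–Cantelli lemma yields that $X(n) = 0$ for all sufficiently large $n$, $\BB P_k$-almost surely. In particular, $Y_n \to 1$ almost surely.

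For the $L^1$-convergence I need uniform integrability of $\{Y_n\}$, and this is where the rescaling by $r < 1$ does the work. Taking $p := 1/r > 1$, I observe
\[
\BB E_k\!\left[ Y_n^{p} \right] = \BB E_k\!\left[ \prod_{j=1}^d e^{\lambda_j X_j(n)} \right] \leq C .
\]
Thus $\{Y_n\}_{n\in\BB N}$ is bounded in $L^p$ for some $p > 1$, hence uniformly integrable under $\BB P_k$. The Vitali convergence theorem then gives $\BB E_k[Y_n] \to 1$, which is exactly $f_k^{(n)}(e^{r\lambda}) \to 1$, i.e.\ $g_k^{(n)}(r\lambda) \to 0$. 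Since this holds for every $k \in \{1,\dots,d\}$, we conclude $|g^{(n)}(r\lambda)| \to 0$.

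I do not expect any serious obstacle: every step is standard. The only point worth stating carefully is that subcriticality together with positive regularity implies almost sure extinction (so that $Y_n \to 1$ a.s.), and that the slack factor $r < 1$ is used precisely to upgrade the uniform moment bound at exponent $\lambda$ into $L^p$-boundedness, with $p = 1/r > 1$, at exponent $r\lambda$. This is also the reason the surrounding argument must work with $g(\lambda) = \log f(e^\lambda)$ rather than with $f(q)$ directly: scaling $\lambda$ by $r < 1$ corresponds to taking componentwise powers of $q$, which is the operation that interacts well with moment bounds.
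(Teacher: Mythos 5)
Your proof is correct and takes essentially the same route as the paper: both deduce a.s.\ convergence of $Z_n = \prod_j e^{r\lambda_j X_j(n)}$ to $1$ from subcriticality (the paper cites a.s.\ extinction directly, while you derive it via Borel--Cantelli, a minor expansion), and both establish uniform integrability from the $L^{1/r}$ bound before applying Vitali. The only cosmetic difference is that the paper proves uniform integrability explicitly with a H\"older estimate, whereas you invoke the standard fact that $L^p$-boundedness for $p>1$ implies uniform integrability; these are the same idea.
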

\begin{proof}
Write $Z_n := \prod_{j=1}^d e^{r \lambda_j X_j(n)}$, so that for $k\in\{1,\dots,d\}$, 
\eqbn
g_k^{(n)}(r \lambda) = \log \BB E_k[Z_n] .
\eqen
Since the extinction probability is one (Hypothesis~\ref{item-gf-extinction}), a.s.\ each $X_j(n)$ is equal to zero for all sufficiently large $n$, so a.s.\ $\lim_{n\to\infty} Z_n = 1$. 
By H\"older's inequality, 
\alb
\BB E_k[Z_n \BB 1_{(Z_n > 1)} ]  
\leq \BB E_k[Z_n^{1/r}]^r \BB P_k[Z_n > 1]^{1-r}  
= \exp\left(r g_k^{(n)}(\lambda) \right) \BB P_k\left[ Z_n > 1 \right]^{1-r} .
\ale
Since $\lim_{n\to\infty} Z_n = 1$ a.s.\ and $\limsup_{n\to\infty} g_k^{(n)}(\lambda) < \infty$, this goes to zero as $n\to\infty$. By the Vitali convergence theorem, we get that $\lim_{k\to\infty} \BB E_k[Z_n] = 1$, which implies that $\lim_{k\to\infty} g_k^{(n)}(r\lambda) = 0$.
\end{proof}

\begin{proof}[Proof of Proposition~\ref{prop-unstable}]
First assume that $\eta_j < \lambda_j$ for each $j\in \{1,\dots,d\}$. 
Then there exists $r\in (0,1)$ such that $  \eta \leq r \lambda$. 
By Lemma~\ref{lem-unstable-scaled}, this implies that $\lim_{n\to\infty} |g^{(n)}(\eta)| = 0$. 

More generally, assume that we only have $\eta < \lambda$, i.e., $\eta_j \leq \lambda_j$ for each $j\in \{1,\dots,d\}$ and at least one of the inequalities is strict. 
By the second assertion of Lemma~\ref{lem-mono}, there exists $N\in\BB N$ such that $g_k^{(N)}(\eta) < g_k^{(N)}(\lambda)$ for each $k \in \{1,\dots,d\}$. Since $\limsup_{n\to\infty} |g^{(n)}(g^{(N)}(\lambda))| < \infty$, we can apply the first paragraph with $g^{(N)}(\lambda)$ in place of $\lambda$ and $g^{(N)}(\eta)$ in place of $\eta$ to get that $\limsup_{n\to\infty} |g^{(n)}(\eta)| = 0$. 
This gives assertion~\eqref{item-unstable-zero}.
Assertion~\eqref{item-unstable-infty} follows by applying Assertion~\eqref{item-unstable-zero} with the roles of $\lambda$ and $\eta$ interchanged.
\end{proof} 

We have the following minor generalization of Proposition~\ref{prop-unstable}. 

\begin{lem} \label{lem-unstable-ssl}
Fix $m\in\BB N$. 
\begin{enumerate}[$(i)$]
\item Let $\lambda \in [0,\infty)^d \setminus \{0\}$ and assume that $\limsup_{r \to\infty} | g^{(r m )}(\lambda)| < \infty$. 
If $\eta \in [0,\infty)^d$ such that $\eta < \lambda$, then $\lim_{r \to\infty} |g^{(r m)}(\eta)| = 0$.  \label{item-unstable-zero-ssl}
\item Let $\lambda \in [0,\infty)^d \setminus \{0\}$ and assume that $\limsup_{r \to\infty} | g^{(r m)}(\lambda)| > 0 $. 
If $\eta \in [0,\infty)^d$ such that $\eta > \lambda$, then $\limsup_{r \to\infty} |g^{(r m)}(\eta)| = \infty$. \label{item-unstable-infty-ssl} 
\end{enumerate}
\end{lem}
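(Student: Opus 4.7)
The plan is to directly adapt the proofs of Lemma~\ref{lem-unstable-scaled} and Proposition~\ref{prop-unstable}, with $n$ restricted to multiples of $m$ throughout. Equivalently, one can view the result as Proposition~\ref{prop-unstable} applied to the ``$m$-step'' log-generating function $\wt g := g^{(m)}$, whose iterates are $\wt g^{(r)} = g^{(rm)}$; but because the original arguments only ever use countably many iterates, the cleanest path is simply to repeat them on the subsequence $n = rm$.

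First I would establish the subsequence version of Lemma~\ref{lem-unstable-scaled}: if $\lambda \in [0,\infty)^d \setminus \{0\}$ with $\limsup_{r \to \infty} |g^{(rm)}(\lambda)| < \infty$, then for every $s \in [0,1)$, $\lim_{r \to \infty} |g^{(rm)}(s \lambda)| = 0$. The proof is verbatim that of Lemma~\ref{lem-unstable-scaled}: set $Z_n := \prod_{j=1}^d e^{s \lambda_j X_j(n)}$, note that $Z_n \to 1$ almost surely by the extinction hypothesis, bound $\BB E_k[Z_n \BB 1_{(Z_n > 1)}] \leq \exp(s\, g_k^{(n)}(\lambda)) \, \BB P_k[Z_n > 1]^{1-s}$ by H\"older, and apply the Vitali convergence theorem. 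The estimate only ever evaluates $g_k^{(n)}(\lambda)$ at indices $n = rm$, so the subsequence assumption suffices.

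Next I would prove~\eqref{item-unstable-zero-ssl} by imitating the two-case argument in the proof of Proposition~\ref{prop-unstable}. In the special case $\eta_j < \lambda_j$ for every $j$, I would pick $s \in (0,1)$ with $\eta \leq s \lambda$ and combine the subsequence lemma above with componentwise monotonicity (first assertion of Lemma~\ref{lem-mono}) to get $0 \leq g^{(rm)}_k(\eta) \leq g^{(rm)}_k(s\lambda) \to 0$. For the general case $\eta < \lambda$, the strict-monotonicity clause of Lemma~\ref{lem-mono} supplies an $N \in \BB N$; choosing any $r_0$ with $r_0 m \geq N$ and (using $\limsup_r |g^{(rm)}(\lambda)| < \infty$) with $|g^{(r_0 m)}(\lambda)|$ finite, one gets $g^{(r_0 m)}_k(\eta) < g^{(r_0 m)}_k(\lambda)$ strictly in every coordinate. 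I would then apply the special case to the pair $(g^{(r_0 m)}(\eta),\, g^{(r_0 m)}(\lambda))$, with the subsequence index shifted by $r_0$, to conclude $\lim_{r \to \infty} |g^{(rm)}(\eta)| = 0$. Assertion~\eqref{item-unstable-infty-ssl} follows from~\eqref{item-unstable-zero-ssl} by interchanging the roles of $\lambda$ and $\eta$, exactly as in the proof of Proposition~\ref{prop-unstable}.

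I do not expect any real obstacle: the whole argument is a direct translation of the $n \to \infty$ version. The only mildly delicate point is verifying that every estimate used in Lemma~\ref{lem-unstable-scaled} and Proposition~\ref{prop-unstable} touches $g^{(n)}(\lambda)$ only at the indices $n = rm$, which is immediate from inspection.
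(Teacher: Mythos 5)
Your proof is correct, and your observation that the argument boils down to re-running Lemma~\ref{lem-unstable-scaled} and Proposition~\ref{prop-unstable} along the subsequence $n = rm$ is sound. However, the paper takes the route you mention in passing and then set aside: it introduces the subsampled process $\wt X$ with $\wt X_k(r) = X_k(mr)$, notes that the iterates of the associated $g$-function are exactly $g^{(rm)}$, and then simply invokes Proposition~\ref{prop-unstable} for $\wt X$. That version is a one-liner, at the small cost of implicitly checking that $\wt X$ again satisfies the standing hypotheses of Theorem~\ref{thm-fixed-gf} (positive regularity and non-singularity pass to $\wt X$ since the relevant events depend only on finitely many generations, the mean matrix of $\wt X$ is $M^m$ with spectral radius $\rho^m < 1$, and continuity of the generating function is preserved under composition). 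Your re-run approach is more self-contained and avoids that verification, but is longer; both are valid. One very minor point in your general-case argument, which is also present in the paper's proof of Proposition~\ref{prop-unstable}: after applying strict monotonicity, you should note that $g^{(r_0 m)}(\lambda) \neq 0$ (so that the special case is applicable); this holds because Lemma~\ref{lem-mono} gives $g^{(n)}(\lambda) > g^{(n)}(0) = 0$ coordinatewise once $n \geq N$, or alternatively because $g^{(r_0 m)}(\lambda) = 0$ would force $g^{(r_0 m)}(\eta) = 0$ by monotonicity and the conclusion is then trivial.
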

\begin{proof}
Consider the branching process $\{\wt X(r)\}_{r\in\BB N_0}$ where we observe the population only once every $m$ generations. That is, we set $\wt X_k(r) = X_k(m r)$ for each $k \in \BB N$. One easily verifies that $\{\wt X(r)\}_{r\in\BB N_0}$ satisfies the hypotheses of Theorem~\ref{thm-fixed-gf}. The lemma follows by applying Proposition~\ref{prop-unstable} to this branching process
\end{proof}

Another useful consequence of Proposition~\ref{prop-unstable} is the following lemma. 

\begin{lem} \label{lem-monotone-lim}
Let $\lambda \in [0,\infty)^d$.
If there exists $ m \in \BB N$ such that $g^{(m)}(\lambda) < \lambda$, then $\lim_{n\to\infty} g^{(n)}(\lambda) = 0$. 
Moreover, if there exists $m \in\BB N$ such that $g^{(m)}(\lambda) > \lambda$, then $\lim_{n\to\infty} |g^{(n)}(\lambda)| = \infty$.
\end{lem}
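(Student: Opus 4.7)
The plan is to first analyze the subsequence $\{g^{(km)}(\lambda)\}_{k \in \BB N}$ using the monotonicity of $g^{(m)}$ (Lemma~\ref{lem-mono}) together with Lemma~\ref{lem-unstable-ssl}, and then upgrade to convergence of the full sequence by exploiting the continuity of the iterates $g^{(r)}$ for $r \in \{0, 1, \dots, m-1\}$.

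For the first statement, suppose $g^{(m)}(\lambda) < \lambda$. Applying the monotone map $g^{(m)}$ inductively to both sides shows that $\{g^{(km)}(\lambda)\}_{k \geq 0}$ is componentwise decreasing and bounded above by $\lambda$. In particular, $\limsup_k |g^{(km)}(\lambda)| \leq |\lambda| < \infty$, and $\lambda \neq 0$ (else $g^{(m)}(\lambda) = 0 = \lambda$). I then apply Lemma~\ref{lem-unstable-ssl}(i) with $\eta = g^{(m)}(\lambda) < \lambda$ to conclude $\lim_k |g^{((k+1)m)}(\lambda)| = 0$. To extend this to the full sequence, write $n = km + r$ with $r \in \{0, \dots, m-1\}$ and use that $g^{(r)}$ is continuous at $0$ (a consequence of Hypothesis~\ref{item-gf-cont}) with $g^{(r)}(0) = 0$; since there are only finitely many residues $r$, this yields $g^{(n)}(\lambda) \to 0$.

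For the second statement, suppose $g^{(m)}(\lambda) > \lambda$. The symmetric monotonicity argument shows $\{g^{(km)}(\lambda)\}$ is componentwise increasing, so it has a componentwise limit $\mu_0 \in [0,\infty]^d$. I claim $|\mu_0| = \infty$: if not, then $\limsup_k |g^{(km)}(\lambda)| < \infty$ and also $\limsup_k |g^{(km)}(\lambda)| \geq |\lambda| > 0$, so Lemma~\ref{lem-unstable-ssl}(ii) with $\eta = g^{(m)}(\lambda) > \lambda$ would force $\limsup_k |g^{((k+1)m)}(\lambda)| = \infty$, contradicting the boundedness of the sequence. The main obstacle is the extension from $|g^{(km)}(\lambda)| \to \infty$ to $|g^{(n)}(\lambda)| \to \infty$, since unlike in the first case I cannot invoke continuity at the blow-up point directly. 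I would argue by contradiction: if $|g^{(n_i)}(\lambda)| \leq C$ along some subsequence $n_i \to \infty$, then pigeonholing the residues $n_i \bmod m$ produces a fixed $s \in \{0, \dots, m-1\}$ with $n_i + s$ a multiple of $m$ for infinitely many $i$. Continuity of $g^{(s)}$ on the compact set $[0, C]^d$ then bounds $|g^{(n_i + s)}(\lambda)| = |g^{(s)}(g^{(n_i)}(\lambda))|$ by a finite constant, contradicting $|g^{(km)}(\lambda)| \to \infty$ along the subsequence of multiples of $m$ established in the previous step.
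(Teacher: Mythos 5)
Your argument for the first assertion is correct and takes a slightly different route than the paper. The paper handles each residue class $s \in \{0,\dots,m-1\}$ by its own monotone-subsequence argument plus an application of Lemma~\ref{lem-unstable-ssl}, whereas you dispatch the residue $s=0$ in one step with Lemma~\ref{lem-unstable-ssl}$(i)$ (taking $\eta = g^{(m)}(\lambda)$) and then pass to all $n$ via continuity of $g^{(r)}$ at the origin. That bootstrap is sound because the limit point $0$ is finite and $g^{(r)}(0)=0$, so continuity of $g^{(r)}: [0,\infty]^d\to[0,\infty]^d$ directly gives $g^{(r)}(g^{(km)}(\lambda))\to 0$.

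The analogous bootstrapping step for the second assertion has a genuine gap. You claim that ``continuity of $g^{(s)}$ on the compact set $[0,C]^d$ bounds $|g^{(s)}(g^{(n_i)}(\lambda))|$ by a finite constant.'' Under Hypothesis~\ref{item-gf-cont}, $g^{(s)}$ is continuous from $[0,\infty]^d$ to $[0,\infty]^d$, and such a map may take the value $\infty$ at points of $[0,C]^d$; a continuous $[0,\infty]^d$-valued map on a compact set has compact image, but compact subsets of $[0,\infty]^d$ need not be bounded, since $[0,\infty]^d$ is itself compact. And indeed, no contradiction arises from your setup: if the bounded (and in fact nondecreasing) subsequence $g^{(n_i)}(\lambda)$ converges to some $\mu^*\in[0,C]^d$, then continuity forces $g^{(s)}(\mu^*)$ to equal the limit $\mu_0$ of $g^{(km)}(\lambda)$, which you have just shown has $|\mu_0|=\infty$; this is entirely consistent. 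The remedy is to argue per residue class, exactly as in the first assertion: for each fixed $s$ the sequence $\{g^{(rm+s)}(\lambda)\}_r$ is nondecreasing (apply $g^{(s)}$ to $g^{(rm)}(\lambda)\le g^{((r+1)m)}(\lambda)$), hence has a limit $\ol\lambda^s\in[0,\infty]^d$; if $\ol\lambda^s$ were finite, continuity would make it a fixed point of $g^{(m)}$, the strictness clause of Lemma~\ref{lem-mono} would give $\ol\lambda^s > g^{(s)}(\lambda)$ (so in particular $\ol\lambda^s\ne 0$), and Lemma~\ref{lem-unstable-ssl}$(i)$ applied with $\ol\lambda^s$ in the role of $\lambda$ and $\eta = g^{(s)}(\lambda)$ would force $g^{(rm+s)}(\lambda)\to 0$, contradicting the nondecreasing convergence to $\ol\lambda^s>0$.
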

\begin{proof}
Let $m \in \BB N$ such that $g^{(m)}(\lambda) < \lambda$. 
By Lemma~\ref{lem-mono}, we have
\eqb  \label{eqn-shifted-mono} 
g^{(m + n)}(\lambda) \leq g^{(n)}(\lambda) ,\quad \forall n \geq 0 
\eqe 
where here we set $g^{(0)}(\lambda) = \lambda$. Furthermore, the inequality in~\eqref{eqn-shifted-mono} is strict if $n$ is sufficiently large. 
Hence, for each $s \in \{0,\dots,m-1\}$, we have $g^{((r+1) m + s)}(\lambda) \leq g^{(r m +s)}(\lambda)$ for each integer $r  \geq 0$. This implies that for each such $s$, the limit
\eqb  \label{eqn-mono-ssl}
\ol\lambda^s := \lim_{r \to \infty} g^{(r m + s)}(\lambda) 
\eqe 
exists. 

We claim that $\ol\lambda^s = 0$.  
Assume for contradiction that $\ol\lambda^s  > 0$ (i.e., at least one component is greater than zero). 
We have $g^{(r m)}(\ol\lambda^s) = \ol\lambda^s$ for each $r \in \BB N$, so in particular $\limsup_{r\to\infty} | g^{(r m  )}( \ol\lambda^s)| > 0$. Since the inequality in~\eqref{eqn-shifted-mono} is strict for large enough $n$, we have $\ol\lambda^s < g^{(s)}(\lambda)$. 
Hence Lemma~\ref{lem-unstable-ssl}\eqref{item-unstable-infty-ssl} implies that $\limsup_{r\to\infty} |g^{(r m)}( g^{(s)}(\lambda) ) | = \infty$, contrary to~\eqref{eqn-mono-ssl}. 
Thus $\lim_{r\to\infty} g^{(r m+s)}(\lambda) = 0$ for every $s \in \{0,\dots,m-1\}$, which implies that $\lim_{n\to\infty} g^{(n)}(\lambda) = 0$. 
  
The statement when $g^{(m)}(\lambda) > \lambda$ is proven similarly.  
\end{proof}

\subsection{Basins of attraction of 0 and $\infty$}
\label{sec-basins}

Our proof of Theorem~\ref{thm-fixed} is based on splitting up space based on the large-$n$ behavior of $g^{(n)}$. 
As in Section~\ref{sec-outline}, we define the following sets: 
\eqb \label{eqn-basin-def}
S_0 := \left\{\lambda \in [0,\infty]^d : \lim_{n\to\infty} |g^{(n)}(\lambda)| = 0\right\} \quad \text{and} \quad 
S_\infty := \left\{ \lambda \in [0,\infty]^d : \limsup_{n\to\infty} |g^{(n)}(\lambda)| = \infty \right\} .
\eqe 
Note the asymmetry in the definitions: we have a $\lim$ in the definition of $S_0$, but a $\limsup$ in the definition of $S_\infty$. The reason for this asymmetry comes from Proposition~\ref{prop-unstable}. 
We also note that since our branching process is positive regular (Definition~\ref{def-componentwise}), there exists $N\in\BB N$ such that if any of the $d$ coordinates of $\lambda$ is equal to $\infty$, then $g^{(N)}(\lambda) = (\infty,\dots,\infty)$. This implies that
\eqb \label{eqn-infty-subset}
[0,\infty]^d\setminus [0,\infty)^d \subset S_\infty.
\eqe 
    
\begin{lem} \label{lem-basin-closed}
We have $g(S_0)\subset S_0$, $g(S_\infty) \subset S_\infty$, $g^{-1}(S_0)\subset S_0$, and $g^{-1}(S_\infty) \subset S_\infty$. 
\end{lem}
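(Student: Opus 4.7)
The plan is to observe that all four inclusions are immediate consequences of the semigroup identity $g^{(n)} \circ g = g^{(n+1)} = g \circ g^{(n)}$ together with the elementary fact that the limit and the limsup of a sequence are unchanged by a finite index shift. Concretely, both conditions defining $S_0$ and $S_\infty$ depend only on the tail of the sequence $\{g^{(n)}(\lambda)\}_{n \geq 0}$, and the sequence starting from $g(\lambda)$ is precisely the shift-by-one of the sequence starting from $\lambda$.

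For the forward inclusions $g(S_0) \subset S_0$ and $g(S_\infty) \subset S_\infty$, I would fix $\lambda$ in $S_0$ (resp.\ $S_\infty$) and write $g^{(n)}(g(\lambda)) = g^{(n+1)}(\lambda)$. The sequence $\{g^{(n+1)}(\lambda)\}_{n \geq 0}$ is a tail of $\{g^{(n)}(\lambda)\}_{n \geq 0}$, hence has the same limit $0$ (resp.\ the same limsup $\infty$), which gives $g(\lambda) \in S_0$ (resp.\ $S_\infty$).

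For the backward inclusions $g^{-1}(S_0) \subset S_0$ and $g^{-1}(S_\infty) \subset S_\infty$, I would fix $\lambda \in [0,\infty]^d$ with $g(\lambda) \in S_0$ (resp.\ $S_\infty$) and use the same identity in the other direction: $g^{(n+1)}(\lambda) = g^{(n)}(g(\lambda)) \to 0$ (resp.\ has $\limsup = \infty$). Prepending the single term $g^{(0)}(\lambda) = \lambda$ to the front of a sequence does not alter either its limit or its limsup, so $\lim_{n\to\infty}|g^{(n)}(\lambda)| = 0$ (resp.\ $\limsup_{n\to\infty} |g^{(n)}(\lambda)| = \infty$), giving $\lambda \in S_0$ (resp.\ $S_\infty$).

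There is no real obstacle here: the lemma is simply recording that $S_0$ and $S_\infty$, being defined by the long-time behavior of the iterates of $g$, are fully invariant under $g$. The substantive content of the paper enters only in the next step, where one has to show that $S_0$ and $S_\infty$ are \emph{open} in $[0,\infty]^d$, which is where the continuity hypothesis and the monotonicity results from Section~\ref{sec-mono} will be needed.
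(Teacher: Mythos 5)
Your proof is correct and is essentially the argument the paper has in mind: the paper simply states that the lemma is immediate from the definition of $S_0$ and $S_\infty$, and the tail-shift observation $g^{(n)}(g(\lambda)) = g^{(n+1)}(\lambda)$ that you spell out is exactly what makes it immediate.
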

\begin{proof}
This is immediate from the definition~\eqref{eqn-basin-def}. 
\end{proof}

Our next goal is to prove that the sets $S_0$ and $S_\infty$ are open with respect to the topology of $[0,\infty)^d$ (see Lemma~\ref{lem-basin-open} below). 
To do this, we will first show that $0$ and $\infty$ are attractive fixed points of $g$, in the sense that $S_0$ and $S_\infty$ contain neighborhoods of $0$ and $\infty$, respectively, in $[0,\infty]^d$.

\begin{lem} \label{lem-attractive-zero}
There exists $\ep > 0$ such that for each $\lambda \in [0,\ep]^d$, we have $\lim_{n\to\infty} |g^{(n)}(\lambda)| = 0$. 
\end{lem}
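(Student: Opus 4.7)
The plan is to deduce attractivity of $0$ by combining the existence of a ``contracting'' point provided by Lemma~\ref{lem-gf-bded} with the monotonicity of $g$ (Lemma~\ref{lem-mono}) and the one-point convergence result of Lemma~\ref{lem-monotone-lim}. All the real work has already been carried out in these preliminary lemmas, so the proof reduces to a sandwich argument.

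First I would invoke Lemma~\ref{lem-gf-bded} to produce $q^0 \in (1,\infty)^d$ with $f_k(q^0) < q^0_k$ for every $k \in \{1,\dots,d\}$, and then pass to its componentwise logarithm $\mu^0 := (\log q^0_1, \dots, \log q^0_d) \in (0,\infty)^d$. By Definition~\ref{def-log-gf}, this translates to $g_k(\mu^0) < \mu^0_k$ for every $k$, so in particular $g(\mu^0) < \mu^0$ in the sense of Definition~\ref{def-componentwise}. Applying Lemma~\ref{lem-monotone-lim} (with $m=1$) then yields $\lim_{n\to\infty} g^{(n)}(\mu^0) = 0$.

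Next I would set $\ep := \min_{k \in \{1,\dots,d\}} \mu^0_k > 0$, so that any $\lambda \in [0,\ep]^d$ satisfies $0 \leq \lambda \leq \mu^0$ componentwise. On the one hand, since $\lambda \geq 0$ implies $e^{\lambda_j} \geq 1$ for each $j$, and since each $f_k$ is monotone non-decreasing in its arguments with $f_k(1,\dots,1)=1$, we have $g_k(\lambda) = \log f_k(e^\lambda) \geq 0$; iterating shows $g^{(n)}(\lambda) \geq 0$ for every $n$. On the other hand, Lemma~\ref{lem-mono} gives $g^{(n)}(\lambda) \leq g^{(n)}(\mu^0)$ for every $n$. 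Sandwiching, we conclude
\[
0 \leq g^{(n)}(\lambda) \leq g^{(n)}(\mu^0) \xrightarrow[n\to\infty]{} 0,
\]
which proves $\lim_{n\to\infty} |g^{(n)}(\lambda)| = 0$ uniformly in the choice of $\lambda \in [0,\ep]^d$.

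I do not anticipate a genuine obstacle: the nontrivial content (construction of a strictly contracting point via Perron--Frobenius and subcriticality, and the monotone-convergence argument for a single orbit) has already been packaged into Lemma~\ref{lem-gf-bded} and Lemma~\ref{lem-monotone-lim}, and spreading the conclusion from $\mu^0$ to the whole cube $[0,\ep]^d$ is purely a matter of componentwise monotonicity.
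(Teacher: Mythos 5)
Your proof is correct and follows exactly the same route as the paper's: obtain a strictly contracting point from Lemma~\ref{lem-gf-bded}, apply Lemma~\ref{lem-monotone-lim} to drive its orbit to $0$, and spread the conclusion to the cube $[0,\ep]^d$ via monotonicity (Lemma~\ref{lem-mono}). The only (harmless) addition is that you explicitly record the lower bound $g^{(n)}(\lambda)\geq 0$, which the paper leaves implicit.
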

\begin{proof}
By Lemma~\ref{lem-gf-bded}, there exists $\lambda^0 = (\lambda_1^0,\dots,\lambda_d^0) \in (0,\infty)^d$ such that $g(\lambda^0) < \lambda^0$. By Lemma~\ref{lem-monotone-lim}, this implies that $\lim_{n\to\infty} |g^{(n)}(\lambda^0)| = 0$. 
By the monotonicity of $g$ (Lemma~\ref{lem-mono}), the lemma statement holds with
\eqbn
\ep := \min_{j\in \{1,\dots,d\}} \lambda_j^0 > 0 . 
\eqen 
\end{proof}

We next need a complementary statement for the behavior of $g^{(n)}(\lambda)$ when $|\lambda|$ is large. 

\begin{lem} \label{lem-attractive-infty}
There exists $C > 0$ such that the following is true. 
If $\lambda \in [0,\infty)^d \setminus [0,C]^d$, then $g(\lambda) > \lambda$ and $\lim_{n\to\infty} |g^{(n)}(\lambda)| = \infty$. 
\end{lem}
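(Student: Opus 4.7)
The plan is to use Lemma~\ref{lem-gf-pos} to find a generation $N$ for which $g^{(N)}(\lambda) > \lambda$ holds strictly componentwise whenever $\lambda$ has some coordinate exceeding a threshold $C$; this will imply the divergence $\lim_{n\to\infty} |g^{(n)}(\lambda)| = \infty$ via Lemma~\ref{lem-monotone-lim}.

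First, invoke Lemma~\ref{lem-gf-pos} to obtain $N \in \BB N$ with $p_{k,j}(N) > 0$ for all $k, j \in \{1, \dots, d\}$, and set $C := 1 + \max_{k,j}\bigl(-\log p_{k,j}(N)\bigr)$. Given $\lambda \in [0,\infty)^d \setminus [0,C]^d$, fix $j_*$ with $\lambda_{j_*} > C$. For each $k$, I lower-bound $\BB E_k\bigl[\prod_{j} e^{\lambda_j X_j(N)}\bigr]$ by restricting to the positive-probability event supplied by Lemma~\ref{lem-gf-pos}: if $k \neq j_*$, the event $\{X_{j_*}(N) \geq 1\} \cap \{X_k(N) \geq 1\}$ gives $g_k^{(N)}(\lambda) \geq \log p_{k,j_*}(N) + \lambda_k + \lambda_{j_*}$, which exceeds $\lambda_k$ by the choice of $C$; if $k = j_*$, the event $\{X_k(N) \geq 2\}$ gives $g_k^{(N)}(\lambda) \geq \log p_{k,k}(N) + 2\lambda_k > \lambda_k$. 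Hence $g^{(N)}(\lambda) > \lambda$ componentwise strictly, and Lemma~\ref{lem-monotone-lim} applied with $m = N$ (via the $m$-fold variant encoded in Lemma~\ref{lem-unstable-ssl}) completes the divergence claim. The one-step inequality $g(\lambda) > \lambda$ is then obtained either by the same type of computation when the positive-regularity witness already holds at generation one, or indirectly by observing that $g(\lambda) < \lambda$ would force $g^{(n)}(\lambda) \to 0$ via Lemma~\ref{lem-monotone-lim}, contradicting the divergence just established.

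The main obstacle is securing the joint event $\{X_{j_*}(N) \geq 1\} \cap \{X_k(N) \geq 1\}$ with positive probability uniformly in the pair $(k, j_*)$, which is exactly the strengthened form of positive regularity combined with non-singularity packaged in Lemma~\ref{lem-gf-pos}; without it the one-step version can fail (e.g., when $\BB P_k[X_{j_*}(1) \geq 1] = 0$ for some pair $(k, j_*)$ even as positive regularity holds at a later generation), which is why we must iterate up to generation $N$ rather than argue directly at a single step.
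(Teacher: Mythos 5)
Your argument for $g^{(N)}(\lambda) > \lambda$ strictly in every coordinate (restricting to the events in Lemma~\ref{lem-gf-pos}) and the deduction of $\lim_{n\to\infty}|g^{(n)}(\lambda)| = \infty$ via Lemma~\ref{lem-monotone-lim} is correct and matches the paper's intended approach. (The paper's displayed bound uses $\min_j$ where it should use $\max_j$; your version uses the correct quantifier.) Also, you can apply Lemma~\ref{lem-monotone-lim} directly with $m = N$ rather than routing through Lemma~\ref{lem-unstable-ssl}.

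The genuine gap is in your treatment of the one-step claim $g(\lambda) > \lambda$. Your ``indirect'' argument only rules out $g(\lambda) < \lambda$ in the sense of Definition~\ref{def-componentwise}; it does not rule out the possibility that $g(\lambda)$ and $\lambda$ are incomparable, i.e., $g_j(\lambda) < \lambda_j$ for some $j$ and $g_{j'}(\lambda) > \lambda_{j'}$ for some other $j'$. This possibility really occurs. Take $d = 2$ where a type-$k$ individual produces one type-$k$ child with probability $\ep$, two type-$(3-k)$ children with probability $\delta$, and no children otherwise, with $\ep + 2\delta < 1$; this is subcritical, positive regular (at $N=2$), and non-singular. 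For $\lambda = (0, \Lambda)$ with $\Lambda > 0$ one computes $g_2(\lambda) = \log(\ep e^{\Lambda} + 1 - \ep) < \Lambda$ while $g_1(\lambda) = \log(\delta e^{2\Lambda} + 1 - \delta) > 0$, so $g(\lambda)$ and $\lambda$ are incomparable no matter how large $C$ is taken. Your fallback (``when the positive-regularity witness already holds at generation one'') does not apply here since $p_{k,j}(1) = 0$ for $j = k$. Note that the paper itself only establishes $g^{(N)}(\lambda) > \lambda$ and never proves the one-step inequality; the one-step claim in the lemma statement is not used in the rest of the paper (only the divergence is used, in Lemmas~\ref{lem-basin-open}, \ref{lem-ray-intersect}, \ref{lem-retract}), and as the example shows it is actually false. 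So the honest conclusion is that the lemma as written overclaims; what is true and what you correctly proved is the existence of $C$ and $N$ with $g^{(N)}(\lambda) > \lambda$ (strictly componentwise) and $\lim_{n\to\infty}|g^{(n)}(\lambda)| = \infty$ whenever $\lambda \in [0,\infty)^d \setminus [0,C]^d$.
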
 
\begin{proof} 
Let $N\in\BB N$ and $p_{k,j}(N)$ for $k,j \in \{1,\dots,d\}$ be as in Lemma~\ref{lem-gf-pos}. 
By Lemma~\ref{lem-gf-pos}, each $p_{k,j}(N)$ is positive.  
Therefore,
\eqbn
 m := \min\{p_{k,j}(N) : k,j \in \{1,\dots,d\} \}  > 0 .
\eqen
For $\lambda \in [0,\infty)^d$, the definitions of $p_{k,j}(N)$ and $m$ imply that 
\alb
\exp( g_k^{(N)}(\lambda) ) 
= \BB E_k\left[ \prod_{j=1}^d e^{\lambda_j X_j(N)} \right] 
\geq \min_{j \in \{1,\dots,d\}} \left(  e^{ \lambda_j + \lambda_k}     p_{k,j}(N) \right)  
\geq e^{\lambda_k }  m \exp\left(  \min_{j\in\{1,\dots,d\}}  \lambda_j \right) .
\ale
Therefore, if $\min_{j \in \{1,\dots,d\}} \lambda_j \geq C := \log m^{-1}$, then $g^{(N)}(\lambda) > \lambda$. By Lemma~\ref{lem-monotone-lim}, this implies that $\lim_{n\to\infty} |g^{(n)}(\lambda)| = \infty$. 
\end{proof}

\begin{lem} \label{lem-basin-open} 
Define $S_0$ and $S_\infty$ as in~\eqref{eqn-basin-def}. Then $S_0$ and $S_\infty$ are relatively open subsets of $[0,\infty]^d$. 
\end{lem}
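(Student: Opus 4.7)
The plan is to prove both openness statements by the same ``attractive core and pullback'' template. For each of $S_0$ and $S_\infty$, I would first exhibit an open ``core'' $K \subset [0,\infty]^d$ contained in the basin, then take an arbitrary point $\lambda$ of the basin and use the defining dynamics to land $g^{(n)}(\lambda)$ in the interior of $K$ for some $n$, and finally invoke continuity of the iterate $g^{(n)}$ together with the backward invariance $g^{-1}(S_0) \subset S_0$ and $g^{-1}(S_\infty) \subset S_\infty$ furnished by Lemma~\ref{lem-basin-closed} to promote this into an open neighborhood of $\lambda$ lying in the basin. Two preliminaries are needed at the start. First, $g = \log \circ f \circ \exp$ extends to a continuous self-map of $[0,\infty]^d$: Hypothesis~\ref{item-gf-cont} gives continuity of each $f_k$ on $(0,\infty]^d$, the componentwise exponential is a homeomorphism from $[0,\infty]^d$ onto $[1,\infty]^d$, and $f([1,\infty]^d) \subset [1,\infty]^d$ by monotonicity of $f$ and $f(\bd{1}) = \bd{1}$, so that $\log$ can be applied and is itself a homeomorphism onto $[0,\infty]$. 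Thus every $g^{(n)}$ is continuous on all of $[0,\infty]^d$. Second, since $\lim = 0$ and $\limsup = \infty$ cannot hold together, $S_0 \cap S_\infty = \emptyset$, and combined with~\eqref{eqn-infty-subset} this yields $S_0 \subset [0,\infty)^d$.

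For $S_0$, Lemma~\ref{lem-attractive-zero} produces $\ep > 0$ with core $K_0 := [0,\ep]^d \subset S_0$. For $\lambda \in S_0$, the convergence $g^{(n)}(\lambda) \to 0$ yields some $n$ with $g^{(n)}(\lambda) \in [0,\ep/2]^d$, which lies in the interior of $K_0$ inside $[0,\infty]^d$. Continuity of $g^{(n)}$ then provides an open neighborhood $U \ni \lambda$ with $g^{(n)}(U) \subset K_0 \subset S_0$, and Lemma~\ref{lem-basin-closed} gives $U \subset S_0$.

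For $S_\infty$, the corresponding core is $K_\infty := \{\lambda \in [0,\infty]^d : \min_j \lambda_j > C\}$, where $C$ comes from Lemma~\ref{lem-attractive-infty}; this set is open in $[0,\infty]^d$ and contained in $S_\infty$ (the points with all coordinates finite by Lemma~\ref{lem-attractive-infty}, and the points with some coordinate equal to $\infty$ by~\eqref{eqn-infty-subset}). The subtlety here is that $\limsup_{n} |g^{(n)}(\lambda)| = \infty$ only ensures that \emph{some} coordinate of $g^{(n)}(\lambda)$ is large at \emph{some} time, not that all coordinates are simultaneously large. To promote this, I would invoke positive regularity: using the elementary bound $\exp g_k^{(N)}(\mu) = \BB E_k[\prod_j e^{\mu_j X_j(N)}] \geq \BB P_k[X_j(N) \geq 1]\, e^{\mu_j}$, which is valid for every $\mu \in [0,\infty]^d$ and every $k,j$, positive regularity produces $N \in \BB N$ and $D > 0$ with $\min_k g_k^{(N)}(\mu) \geq \max_j \mu_j - D$ for all $\mu$. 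Choosing $n_0$ so that $\max_j g^{(n_0)}_j(\lambda) > C + D + 1$ then forces $g^{(n_0+N)}(\lambda) \in K_\infty$, and continuity of $g^{(n_0+N)}$ together with Lemma~\ref{lem-basin-closed} yields an open neighborhood of $\lambda$ contained in $S_\infty$.

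The main obstacle is precisely this asymmetry between the $\lim$ in the definition of $S_0$ and the $\limsup$ in the definition of $S_\infty$: the $S_\infty$ argument cannot simply mirror the $S_0$ one, because a $\limsup$ at infinity is compatible with only one coordinate of $g^{(n)}(\lambda)$ ever being large at a time. The spreading estimate $\min_k g_k^{(N)}(\mu) \geq \max_j \mu_j - D$ above is the only nontrivial ingredient required beyond continuity and the monotonicity framework already in place; once it is available, both basins are handled uniformly by the same pullback argument.
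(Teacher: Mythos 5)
Your proof is correct, and for $S_0$ it coincides exactly with the paper's argument: core $[0,\eps]^d$, land via $\lim g^{(n)}(\lambda) = 0$, pull back by continuity of $g^{(n)}$ and the backward invariance from Lemma~\ref{lem-basin-closed}. For $S_\infty$, however, you introduced an unnecessary complication. You chose the core $K_\infty = \{\min_j \lambda_j > C\}$ and consequently needed a spreading estimate $\min_k g_k^{(N)}(\mu) \geq \max_j \mu_j - D$ to convert ``some coordinate large'' into ``all coordinates large.'' But Lemma~\ref{lem-attractive-infty} already asserts that every $\lambda \in [0,\infty)^d \setminus [0,C]^d$ --- that is, every $\lambda$ with $\max_j \lambda_j > C$ --- satisfies $\lim_n |g^{(n)}(\lambda)| = \infty$. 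Combined with~\eqref{eqn-infty-subset}, this means the \emph{larger} set $[0,\infty]^d \setminus [0,C]^d$ is already a relatively open core contained in $S_\infty$, and it is entered directly: for $\lambda \in S_\infty \cap [0,\infty)^d$, the condition $\limsup_n |g^{(n)}(\lambda)| = \infty$ produces some $n$ with $|g^{(n)}(\lambda)| > C\sqrt{d}$, which forces at least one coordinate to exceed $C$, i.e., $g^{(n)}(\lambda) \in [0,\infty]^d \setminus [0,C]^d$. This is what the paper does. So the ``asymmetry obstacle'' you identify --- that $\limsup = \infty$ does not force all coordinates to be simultaneously large --- is genuine as an observation but is not an obstacle for this lemma, because the natural core only requires one coordinate to be large. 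Your spreading estimate is correct (it is essentially the positive-regularity bound used elsewhere, e.g.\ in the proof of Lemma~\ref{lem-attractive-infty} itself), so your proof does go through; it simply adds a step the paper shows to be avoidable by a better choice of core.
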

\begin{proof}
Let $\ep > 0$ be as in Lemma~\ref{lem-attractive-zero}, so that 
\eqb \label{eqn-use-attractive-zero}
[0,\ep]^d \subset S_0 . 
\eqe
Let $\lambda \in S_0$. 
By the definition of $S_0$, there exists $n\in\BB N$ such that $g^{(n)}(\lambda) \in [0,\ep)^d$. 
By Hypothesis~\ref{item-gf-cont}, the function $g^{(n)}$ is continuous from $[0,\infty]^d$ to itself. 
Since $[0,\ep)^d$ is a relatively open subset of $[0, \infty]^d$, the set $(g^{(n)})^{-1}([0,\ep)^d)$ is a neighborhood of $\lambda$ in $[0,\infty]^d$. 
By~\eqref{eqn-use-attractive-zero} and Lemma \ref{lem-basin-closed}, $(g^{(n)})^{-1}([0,\ep)^d) \subset S_0$. 
Thus, $S_0$ is relatively open. 

To prove the statement for $S_\infty$, let $C > 0$ be as in Lemma~\ref{lem-attractive-infty}. By that lemma and~\eqref{eqn-infty-subset}, $[0,\infty]^d \setminus [0,C]^d \subset S_\infty$. Since $[0,\infty]^d \setminus [0,C]^d$ is a relatively open subset of $[0,\infty]^d$ and $g^{(n)} : [0,\infty]^d \to [0,\infty]^d$ is continuous, we can use exactly the same argument as in the case of $S_0$ to conclude that $S_\infty$ is relatively open. 
\end{proof}

Our next goal is to show that $[0,\infty)^d \setminus (S_0\cup S_\infty)$ is a retract (in fact, a deformation retract) of an open subset of $(0,\infty)^d$ with the topology of a closed disk. The key observation for this purpose is the following lemma.

\begin{lem} \label{lem-ray-intersect} 
Let $u \in [0,\infty)^d$ with $|u| = 1$. There exists exactly one value of $r \in (0,\infty)$ such that $r u \in [0,\infty)^d \setminus (S_0 \cup S_\infty)$. 
\end{lem}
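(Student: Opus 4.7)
The plan is to parametrize the ray by $r$ and examine the sets $\{r>0 : ru \in S_0\}$ and $\{r>0 : ru \in S_\infty\}$, showing that they are disjoint open intervals of the form $(0,r_0)$ and $(r_\infty,\infty)$ respectively, with $r_0 = r_\infty$. The unique intersection point of the ray with $\Lambda := [0,\infty)^d \setminus (S_0 \cup S_\infty)$ will then be $r_0 u = r_\infty u$.

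For the first step, since $|u| = 1$, at least one component $u_j$ is strictly positive. By Lemma~\ref{lem-attractive-zero}, $ru \in [0,\ep]^d \subset S_0$ for all sufficiently small $r > 0$, and by Lemma~\ref{lem-attractive-infty}, if $r > C/u_j$ (where $j$ is chosen so that $u_j > 0$), then $ru \in S_\infty$. Thus both preimages under $r \mapsto ru$ are nonempty, and Lemma~\ref{lem-basin-open} together with the continuity of $r \mapsto ru$ ensures they are open in $(0,\infty)$.

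Next, I will verify the monotonicity of these preimages. If $0 < r_1 < r_2$, then because $u$ has at least one positive component, $r_1 u < r_2 u$ in the sense of Definition~\ref{def-componentwise}. Consequently, if $r_2 u \in S_0$, then $\limsup_n |g^{(n)}(r_2 u)| = 0 < \infty$, so Proposition~\ref{prop-unstable}\eqref{item-unstable-zero} yields $r_1 u \in S_0$; similarly, if $r_1 u \in S_\infty$, then $\limsup_n |g^{(n)}(r_1 u)| = \infty > 0$, so Proposition~\ref{prop-unstable}\eqref{item-unstable-infty} yields $r_2 u \in S_\infty$. Combined with openness, this shows $\{r : ru \in S_0\} = (0, r_0)$ and $\{r : ru \in S_\infty\} = (r_\infty, \infty)$ for some $0 < r_0 \leq r_\infty < \infty$ (disjointness of $S_0$ and $S_\infty$ forces $r_0 \leq r_\infty$).

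The main obstacle is showing $r_0 = r_\infty$, which gives uniqueness. I will argue by contradiction: suppose $r_0 < r_\infty$ and pick any $r \in (r_0, r_\infty)$. Because $r > r_0$, we have $ru \notin S_0$, so $\lim_n |g^{(n)}(ru)|$ is either nonzero or fails to exist, and in either case $\limsup_n |g^{(n)}(ru)| > 0$. Choosing $r' \in (r, r_\infty)$, we have $r' u > ru$ by the positivity of some $u_j$, and then Proposition~\ref{prop-unstable}\eqref{item-unstable-infty} forces $r' u \in S_\infty$, contradicting $r' < r_\infty$. Hence $r_0 = r_\infty$, and the unique $r$ with $ru \in \Lambda$ is $r = r_0$, completing the proof.
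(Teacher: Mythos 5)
Your proof is correct and follows essentially the same approach as the paper: existence comes from the ray being connected and entering both open basins near $0$ and near $\infty$ (via Lemmas~\ref{lem-attractive-zero}, \ref{lem-attractive-infty}, \ref{lem-basin-open}), and uniqueness comes from the instability of intermediate points under radial scaling. The only difference is presentational — you phrase uniqueness through the explicit interval structure $(0,r_0)$, $(r_\infty,\infty)$ together with Proposition~\ref{prop-unstable}, whereas the paper applies Lemma~\ref{lem-unstable-scaled} directly to the two hypothetical ray points, which is a bit more succinct but rests on the same underlying fact.
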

\begin{proof}
Let $L = \{r u  : r  > 0\}$ be the ray started at 0 which passes through $u$. By~\eqref{eqn-basin-def}, the sets $S_0$ and $S_\infty$ are disjoint. By Lemma~\ref{lem-basin-open}, the intersections of these sets with $[0,\infty)^d$ are open subsets of $[0,\infty)^d$. By Lemmas~\ref{lem-attractive-zero} and~\ref{lem-attractive-infty}, $L$ intersects both $S_0$ and $S_\infty$.  
Since $L \subset [0,\infty)^d$ is connected, it follows that $L$ must contain at least one element of $[0,\infty)^d \setminus (S_0\cup S_\infty)$. 

Suppose by way of contradiction that $L$ contains at least two elements of $[0,\infty)^d \setminus (S_0\cup S_\infty)$. Then there exists $R > r > 0$ such that $R u , ru \in [0,\infty)^d \setminus (S_0 \cup S_\infty)$. By the definition~\eqref{eqn-basin-def} of $S_0$ and $S_\infty$, we have $\limsup_{n\to\infty} |g^{(n)}(r u)| > 0$ and $\limsup_{n\to\infty} |g^{(n)}(R u)| < \infty$. This contradicts Lemma~\ref{lem-unstable-scaled}. 
\end{proof}

\begin{figure}[ht!]
\begin{center}
\includegraphics[width=0.5\textwidth]{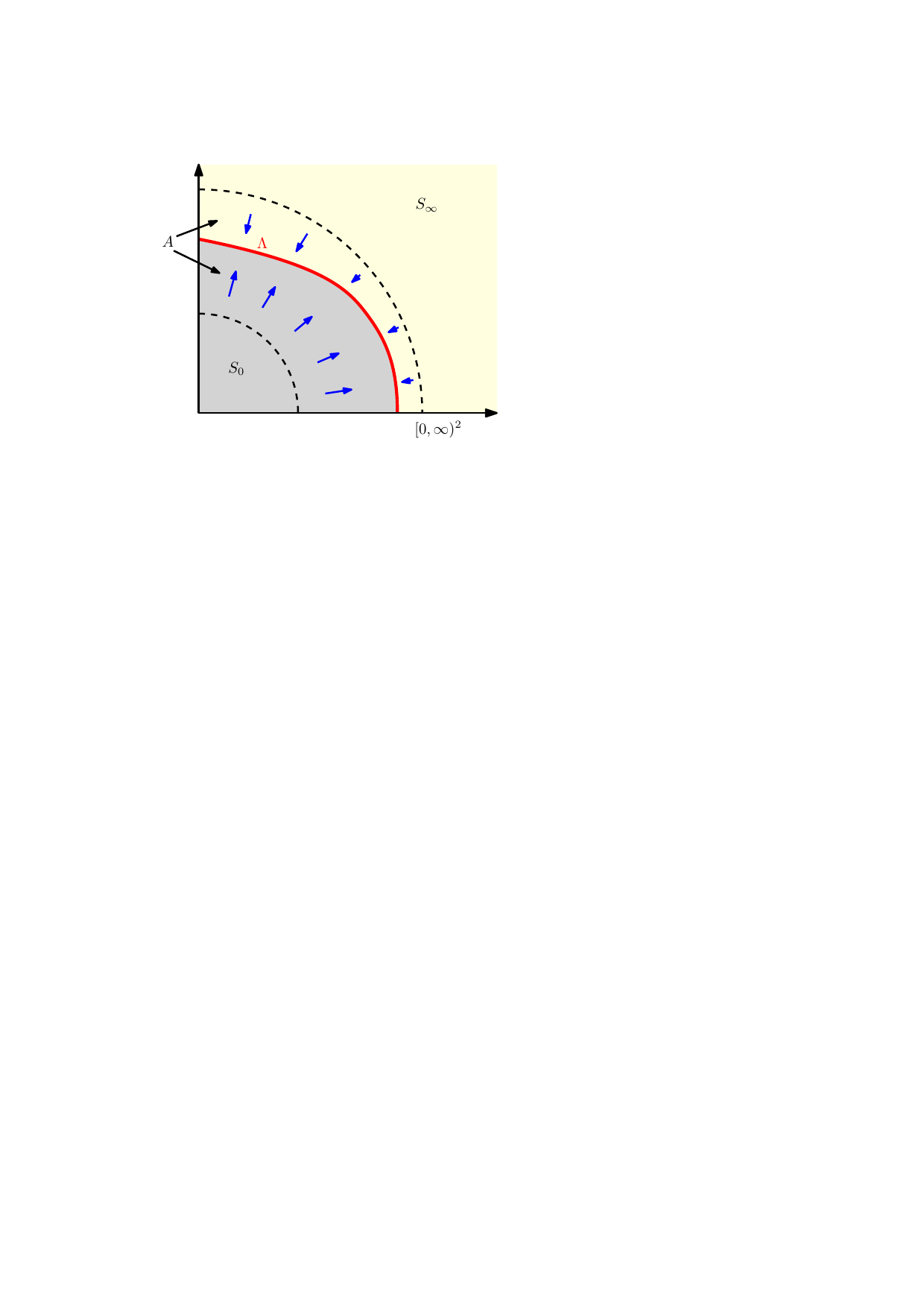}  
\caption{\label{fig-brouwer-proof} Illustration of the proof of Lemma~\ref{lem-retract} in the case $d=2$. We construct a deformation retraction from the region $A$ (bounded by the two dashed curves) to the red set $\Lambda = [0,\infty)^2 \setminus (S_0\cup S_\infty)$ by sliding points along rays through the origin. 
}
\end{center}
\end{figure}

\begin{lem} \label{lem-retract}
There exists $R > r > 0$ such that the set
\eqb \label{eqn-retract-annulus} 
A = A(r,R) := \left\{ \lambda \in [0,\infty)^d  : r \leq |\lambda| \leq R \right\} 
\eqe 
deformation retracts onto $[0,\infty)^d \setminus (S_0\cup S_\infty)$. 
\end{lem}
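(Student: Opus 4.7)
The plan is to choose $r$ small and $R$ large so that $\Lambda := [0,\infty)^d \setminus (S_0 \cup S_\infty)$ lies strictly inside $A = A(r,R)$, and then to exhibit a strong deformation retraction from $A$ onto $\Lambda$ by sliding each point of $A$ toward $\Lambda$ along the ray through the origin, exactly as sketched in Figure~\ref{fig-brouwer-proof}. I will use the Euclidean norm $|\cdot|$ on $\BB R^d$ throughout.

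Concretely, let $\ep > 0$ be as in Lemma~\ref{lem-attractive-zero} and $C > 0$ as in Lemma~\ref{lem-attractive-infty}, and take any $r \in (0,\ep]$ and $R > C\sqrt d$. If $\lambda \in [0,\infty)^d$ satisfies $|\lambda| \leq \ep$ then every coordinate of $\lambda$ is at most $\ep$, so $\lambda \in [0,\ep]^d \subset S_0$; conversely if $|\lambda| > C\sqrt d$ then some coordinate of $\lambda$ exceeds $C$, so $\lambda \in [0,\infty)^d \setminus [0,C]^d \subset S_\infty$. Hence $r < |\lambda| < R$ for every $\lambda \in \Lambda$, and in particular $\Lambda \subset A$ with room to spare on both sides. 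By Lemma~\ref{lem-ray-intersect}, for each $u$ in the spherical simplex $U := \{v \in [0,\infty)^d : |v|=1\}$ there is a unique $\rho(u) > 0$ with $\rho(u)\, u \in \Lambda$; by the bounds above, $\rho(u) \in (r,R)$.

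The main obstacle will be proving that $\rho : U \to (r,R)$ is continuous, and this is where I will use that $\Lambda$ is closed in $[0,\infty)^d$ (which follows from the openness of $S_0$ and $S_\infty$ established in Lemma~\ref{lem-basin-open}). Given $u_n \to u$ in $U$, the bounded sequence $\{\rho(u_n)\}$ has a subsequential limit $\rho_* \in [r,R]$; along that subsequence $\rho(u_n) u_n \to \rho_* u$, and closedness of $\Lambda$ forces $\rho_* u \in \Lambda$. The uniqueness part of Lemma~\ref{lem-ray-intersect} then forces $\rho_* = \rho(u)$, so every subsequential limit equals $\rho(u)$ and continuity follows.

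Given continuity of $\rho$, I will define the straight-line homotopy
\[
H(\lambda,t) \;:=\; \left( (1-t)|\lambda| + t\, \rho\!\left(\tfrac{\lambda}{|\lambda|}\right) \right) \frac{\lambda}{|\lambda|}, \qquad (\lambda,t) \in A \times [0,1],
\]
which is well defined since $r > 0$. Continuity of $H$ follows from that of $\rho$, and since $|\lambda|, \rho(\lambda/|\lambda|) \in [r,R]$ their convex combination also lies in $[r,R]$, so $H$ maps into $A$. One checks directly that $H(\lambda,0) = \lambda$, that $H(\lambda,1) = \rho(\lambda/|\lambda|) \cdot (\lambda/|\lambda|) \in \Lambda$, and that $H(\lambda,t) = \lambda$ for every $\lambda \in \Lambda$ (since then $|\lambda| = \rho(\lambda/|\lambda|)$). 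This will exhibit $H$ as a strong deformation retraction of $A$ onto $\Lambda$, proving the lemma.
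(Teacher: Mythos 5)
Your proof is correct and takes essentially the same approach as the paper: choose $r,R$ so that $\Lambda:=[0,\infty)^d\setminus(S_0\cup S_\infty)$ lies strictly between the two spheres (via Lemmas~\ref{lem-attractive-zero} and~\ref{lem-attractive-infty}), use Lemma~\ref{lem-ray-intersect} plus closedness/compactness of $\Lambda$ to obtain a continuous radial parametrization of $\Lambda$ over the unit simplex, and then write down the straight-line radial homotopy. The only cosmetic difference is that you establish continuity of the scalar function $\rho$ directly via a subsequential-limit argument, whereas the paper packages the same fact by noting that a continuous bijection from a compact space to a Hausdorff space is a homeomorphism.
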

\begin{proof}  
To lighten notation, we define
\eqbn
\Lambda := [0,\infty)^d \setminus (S_0 \cup S_\infty) 
\quad \text{and} \quad 
B := \left\{ \lambda \in [0,\infty)^d : |\lambda| = 1 \right\} .
\eqen 
By Lemmas~\ref{lem-attractive-zero} and~\ref{lem-attractive-infty}, there exists $R > r > 0$ such that $\Lambda \subset A$, with $A$ as in~\eqref{eqn-retract-annulus}. Henceforth fix such a choice of $r$ and $R$. 

For $\lambda \in [0,\infty)^d \setminus \{0\}$, let $\pi(\lambda) := \lambda/|\lambda|$ be the radial projection onto $B$. 
By Lemma~\ref{lem-ray-intersect}, $\pi|_\Lambda$ is a bijection from $\Lambda$ to $B$. 
Obviously, $\pi$ is continuous. 
By Lemma~\ref{lem-basin-open}, the set $\Lambda$ is a relatively closed subset of $[0,\infty]^d$. By Lemma~\ref{lem-attractive-infty}, $\Lambda$ is also bounded. Hence $\Lambda$ is compact. 
Therefore, $\pi|_\Lambda$ is a homeomorphism. 
Hence, there exists a continuous inverse $\phi : B\to \Lambda$ such that
\eqbn
\phi(u) / |\phi(u)| = u ,\quad \forall u \in B .
\eqen 

For $\lambda \in A$ and $t\in [0,1]$, define 
\eqb \label{eqn-retract-def}
H_t(\lambda) :=  (1-t)  \lambda + t \phi\left( \frac{\lambda}{|\lambda|} \right) .
\eqe 
We claim that $H$ is a deformation retraction from $A$ to $\Lambda$. 
Since $\phi$ is continuous, $H$ is jointly continuous in $t$ and $\lambda$. 
By construction, for each $\lambda \in \Lambda$, we have $\phi(\lambda/|\lambda|) = \lambda$. 
Therefore, $H_t(\lambda) = \lambda$ for each $t\in [0,1]$ and $\lambda\in \Lambda$.
Furthermore, for each $\lambda \in A$, the points $\lambda$ and $\phi(\lambda/|\lambda|)$ lie on the same ray from 0 to $\infty$. Since $\Lambda \subset A$, this gives
\eqbn
|H_t(\lambda)| = (1-t) |\lambda| + t |\phi(\lambda/|\lambda|)|  \in [r,R]  
\eqen
so each $H_t$ maps $A$ to itself.  
Finally, for each $\lambda\in A$ we have $H_0(\lambda) = \lambda$ and $H_1(\lambda) = \phi(\lambda/|\lambda|) \in \Lambda$. Thus $H$ is a deformation retraction from $A$ to $\Lambda$, as required.
\end{proof}

\begin{proof}[Proof of Theorem~\ref{thm-fixed}]
To lighten notation, as above we write $\Lambda := [0,\infty)^d \setminus (S_0 \cup S_\infty)$. 
We first argue that $g(\Lambda) \subset \Lambda$. Indeed, if $\lambda\in [0,\infty)^d$ and $g(\lambda ) \in S_0$, then Lemma~\ref{lem-basin-closed} implies that $\lambda \in S_0$. Hence, $\lambda \notin \Lambda$. Similarly, if $g(\lambda) \in S_\infty$, then $\lambda \notin \Lambda$. Hence, if $\lambda\in\Lambda$, we must have $g(\lambda)\in\Lambda$.  

Let $A$ be as in Lemma~\ref{lem-retract}, so that $\Lambda\subset A$. The set $A$ is homeomorphic to a closed $d$-dimensional Euclidean ball. 
By the Brouwer fixed point theorem, every continuous function from $A$ to itself has a fixed point. 

Let $H_1 : A \to \Lambda$ be a retraction, as given by Lemma~\ref{lem-retract}. 
Since $g(\Lambda)\subset\Lambda$, we have that $g\circ H_1 : A \to \Lambda$. In particular, $g\circ H_1$ is a continuous function from $A$ to itself, so there exists $\lambda \in A$ such that $g(H_1(\lambda)) = \lambda$. Necessarily, $\lambda\in\Lambda$, so $H_1(\lambda) = \lambda$ since $H_1$ is a retraction. Thus $g(\lambda) = \lambda$. 

Since $0\notin \Lambda$, we have $\lambda \in [0,\infty)^d \setminus \{0\}$. By Lemma~\ref{lem-fixed-pos}, in fact $\lambda \in (0,\infty)^d$.  
\end{proof}

\section{The countable-type case}
\label{sec-infty}

One can prove versions of Theorem~\ref{thm-fixed-gf} for branching processes with countably many types by, roughly speaking, applying Theorem~\ref{thm-fixed-gf} and taking a limit as the number of possible types goes to $\infty$. To illustrate this, we will prove one version of Theorem~\ref{thm-fixed-gf} in the countable-type case (Theorem~\ref{thm-fixed-pt-infty} below). However, it seems likely to us that there is not a single, elegant theorem statement which covers all countable-type branching process which one might want to consider simultaneously. 

\subsection{Countable-type branching processes} 
\label{sec-ctble-type}
Consider a multi-type branching process $X = \{X(n)\}_{n \in \BB N_0}$ with types indexed by $k\in \BB N$. 
We assume that each individual has only finitely many offspring, i.e., for each $k \in \BB N$, 
\eqb \label{eqn-finite-offspring} 
\BB P_k \left[ \sum_{j=1}^\infty X_j(1)  < \infty \right] = 1 .
\eqe 
For each $k\in \BB N$, we define $f : (0,\infty]^{\BB N  } \to [0,\infty]$ by 
\eqbn
f_k(q)  = \BB E_k\left[\prod_{j=1}^\infty q_j^{X_j(1)} \right] ,\quad \text{where} \quad q = \{q_j\}_{j\in\BB N }  
\eqen
(with the same conventions about $\infty$ as in~\eqref{eqn-generating-function}) and we define the \textbf{generating function} \eqbn
f(q) := \{f_j(q)\}_{j\in\BB N} \in [0,\infty]^{\BB N} . 
\eqen
We also define $f^{(n)}$ to be the composition of $f$ with itself $n$ times. 
Extending $d$ to $\BB N$, the definitions of positive regularity and non-singularity in Definition \ref{def-bp-basic} also apply to the countable-type branching process $X$. 

In contrast to the finite-type case, when there are infinitely many types, the extinction probability is not uniquely defined. There are three types of extinction probabilities: (1) \textbf{global extinction},  defined as $\lim_{n\to\infty} \sum_{k=1}^\infty X_k(n) = 0$, (2)  \textbf{partial extinction}, defined as $\lim_{n \to \infty} X_k(n)=0$ for all $k \in \BB N$, and (3) \textbf{local extinction} in a fixed subset of types $A \subset {\BB N}$, defined as $\lim_{n\to\infty} \sum_{k\in A} X_k(n)=0$. Let $q$, $\tilde{q}$ and $q(A)$ be the probabilities of global extinction, partial extinction and local extinction in $A$ respectively. It can be shown that if $X$ is irreducible and non-singular, then $q\leq q(A)\leq \tilde{q} \leq 1$, and all of the extinction probabilities are solutions to the fixed point equation $s=f(s)$. Let $S$ be the collection of all the fixed points in $[0,1]^{\BB N}$ and $Q$ be the collection of all the extinction probabilities. Unlike the finite-type case where $S$ has at most two elements, countable-type branching process can have a continuum of fixed points. Indeed,  \cite{Bertacchi2014, Bertacchi2017, Braunsteins2019} showed that there exist countable-type branching processes with a finite number of elements in $Q$ and a continuum of fixed points in $S$.  \cite{Bertacchi2020} gave an example with uncountably many elements in both $Q$ and $S$. Some of these references are for branching random walks with an at most countable set of particle locations. In this setting, a multi-type branching process is equivalent to a branching random walk, where the type of a particle corresponds to its location.

If $X$ is  irreducible and non-singular, it is proved in \cite[Theorem 3.1]{Moyal1964} that $q$ is the component-wise minimal element of $S$, and \cite[Theorem 3.1]{Bertacchi2022} that $\tilde{q}$ is the component-wise largest or second largest element of $S$ depending on whether $\tilde{q}=1$ or $\tilde{q}<1$. This implies that the concept of global extinction parallels that of extinction in the finite-type case. In what follows, we will focus on global extinction exclusively.

As in the finite-type case, conditioning an infinite-type branching process on global extinction gives an infinite-type branching process with extinction probability one~\cite{jl-cond-on-ext}. If the global extinction probability for $X$ is one, we can define a \textbf{conjugate branching process} for $X$ in exactly the same manner as in the finite-type case (Definition~\ref{def-conjugate}). Moreover, just as in the finite-type case, conjugate branching processes correspond to solutions of $f(q) =q$ in $(1,\infty)^{\BB N}$. Indeed, since the result of~\cite{jl-cond-on-ext} applies also in the countable-type case,~\eqref{eqn-cond-on-ext} in Proposition \ref{prop-cond-on-ext} still holds. Replacing extinction with global extinction, the statement of Proposition~\ref{prop-conjugate} extends essentially verbatim to the countable-type case, with the same proof. We remark that in the countable-type case, the conjugate process has global extinction probability less than 1, but the partial extinction probability may be 1. 
 
We can approximate an infinite-type branching process by finite-type branching processes in the following manner.  
For $d\in \BB N$, let $X^d = \{X^d(n)\}_{n\in\BB N_0}$ be the $d$-type branching process obtained from $X$ by immediately killing each individual of type $k \geq d+1$. In other words, an individual in $X$ corresponds to an individual in $X^d$ if and only if each of its ancestors has type $k \leq d$. We call $X^d$ the \textbf{branching process truncated at level $d$}, and we denote all objects associated with $X^d$ by an additional superscript $d$. Note that $X^d_k(1)=0$ if $k \geq d+1$.
Consequently, the generating function $f^d = (f_1^d,\dots,f_d^d)$ for $X^d$ is given by
\eqb  \label{eqn-gf-truncated}
f^d_k(q_1,\dots,q_d) 
= \BB E_k\left[ \prod_{j=1}^d q_j^{X_j^d(1)} \right] 
= f_k(q_1,\dots,q_d,1,1,\dots)
,\quad\forall k \in \{1,\dots,d\} .
\eqe 
The idea of approximating a countable type branching process in this manner also appears, e.g., in~\cite{hln-ext-prob}.
 
\subsection{Existence of a conjugate branching process}

Let $X = \{X(n)\}_{n\in\BB N_0}$ be a countable type branching process with global extinction probability one. 
Let us now state a list of assumptions which ensure the existence of a conjugate branching process, equivalently, a fixed point for the generating function $f$ in $(1,\infty)^{\BB N_0}$.
\begin{enumerate}[I.] 
\item \textbf{(Fixed points for truncated processes).} For each $d\in\BB N$, the truncated branching process $X^d$ satisfies the hypotheses of Theorem~\ref{thm-fixed-gf}. \label{item-gf-trunc-infty} 
\item \textbf{(Bounded growth in $k$).} There exist functions
\eqb \label{eqn-weight-function}
\phi , \psi : \BB N \to [1,\infty) \quad \text{such that} 
\quad \lim_{k \to\infty} \phi(k) = \infty \quad \text{and} \quad 
\lim_{k\to\infty} \frac{ \log\psi(k)}{ \phi(k) } = 0
\eqe
such that for each $k\in\BB N$, \label{item-gf-growth-infty}
\eqb
f_k(q) \leq \psi(k) [f_1(q)]^{\phi(k)} ,\quad\forall q \in [1,\infty)^{\BB N}.
\eqe
\item \textbf{(Subcriticality with weights).} There exists $s > 1$ such that \label{item-gf-bded-infty}
\eqbn
\BB E_k\left[ s^{\sum_{j=1}^\infty \phi(j) X_j(1)} \right] \leq s^{\phi(k)} , \quad \forall k \in \BB N . 
\eqen
\item \textbf{(Finiteness of $f_k$).} There exists $t > 1$ such that  \label{item-gf-perturb-infty} 
\eqbn
f_1(t,1,1,\dots) > t \quad \text{and} \quad
f_k( \{\psi(j) t^{\phi(j)}\}_{j \in \BB N} )  < \infty , \quad \forall k \in \BB N. 
\eqen  
\end{enumerate}
We think of $\phi(k)$ from~\eqref{eqn-weight-function} as the ``weight'' of an individual of type $k$. 
For various subcritical branching processes arising from loop-decorated random planar maps, as discussed in Section~\ref{sec-lqg}, we expect something similar to the above list of hypotheses to be satisfied with $\phi(k)$ taken to be a constant multiple of the loop length $k$. See Example~\ref{example-ctble} below for a simple class of examples where the above hypotheses are satisfied. 
 
\begin{thm} \label{thm-fixed-pt-infty}
Under the above assumptions, there exists $q \in (1,\infty)^{\BB N }$ such that $f(q) = q$.
\end{thm}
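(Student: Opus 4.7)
Proof proposal:

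The approach is to apply Theorem~\ref{thm-fixed-gf} to each truncated process $X^d$ (using hypothesis~\ref{item-gf-trunc-infty}) and then pass to the limit as $d\to\infty$ via a diagonal compactness argument. Hypothesis~\ref{item-gf-bded-infty} immediately provides a supersolution $q^\sharp := ((1+\ep)^{\phi(j)})_{j\in\BB N}$: since $\phi(k)\geq 1$, $f_k(q^\sharp) \leq 1+\ep \leq (1+\ep)^{\phi(k)} = q^\sharp_k$, and the truncation $q^\sharp_{[d]}$ is likewise a supersolution for $f^d$. For each $d$ I would choose a fixed point $q^{(d)} \in (1,\infty)^d$ of $f^d$ lying in $[\bd 1, q^\sharp_{[d]}]$. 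One natural way is to iterate $f^d$ downward from $q^\sharp_{[d]}$---the sequence is nonincreasing by Lemma~\ref{lem-mono} and converges by monotone convergence to a fixed point of $f^d$ in this box---and then invoke Theorem~\ref{thm-fixed-gf}, or an adaptation of the Brouwer argument of Section~\ref{sec-basins} restricted to the box, to rule out collapse of the iterate to $\bd 1$. Extend $q^{(d)}$ to $\tilde q^{(d)} \in [1,\infty)^{\BB N}$ by setting $\tilde q^{(d)}_j = 1$ for $j > d$; then $f_k(\tilde q^{(d)}) = \tilde q^{(d)}_k$ for each $k \leq d$ by~\eqref{eqn-gf-truncated}, and $\tilde q^{(d)}_j \leq (1+\ep)^{\phi(j)}$ for every $j$ by construction.

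A diagonal compactness argument then produces a subsequence $d_n \to \infty$ along which $\tilde q^{(d_n)}_k \to q^*_k \in [1, (1+\ep)^{\phi(k)}]$ for every $k$. To pass to the limit in $f_k(\tilde q^{(d_n)}) = \tilde q^{(d_n)}_k$, I use dominated convergence: by~\eqref{eqn-finite-offspring} the product $\prod_j q_j^{X_j(1)}$ is almost surely a finite product, so $\prod_j (\tilde q^{(d_n)}_j)^{X_j(1)} \to \prod_j (q^*_j)^{X_j(1)}$ pointwise a.s., and the uniform bound $\tilde q^{(d_n)}_j \leq (1+\ep)^{\phi(j)}$ dominates this product by $(1+\ep)^{\sum_j \phi(j) X_j(1)}$, whose $\BB P_k$-expectation is at most $1+\ep$ by hypothesis~\ref{item-gf-bded-infty}. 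Hence $f(q^*) = q^*$.

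The final task is to ensure $q^* \in (1,\infty)^{\BB N}$ rather than $q^* = \bd 1$. By the countable-type analog of Lemma~\ref{lem-fixed-pos}, whose proof extends using positive regularity to propagate strict inequality in a single coordinate to all coordinates, it suffices to show that $q^*_{k_0} > 1$ for some single $k_0$. The main obstacle is precisely this uniform nondegeneracy: ensuring the truncated fixed points $q^{(d)}$ can be chosen so that $\liminf_{d\to\infty} q^{(d)}_{k_0} > 1$ for some $k_0$. I would address this by a bootstrap: hypothesis~\ref{item-gf-growth-infty} gives $q^{(d)}_k \leq \psi(k)[q^{(d)}_1]^{\phi(k)}$, so controlling $q^{(d)}_1$ controls every coordinate, while hypothesis~\ref{item-gf-perturb-infty} yields the matching upper bound $q^{(d)}_1 \leq C_1$. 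The delicate part is the uniform lower bound on $q^{(d)}_1$; combining the downward-iteration construction above with the nontrivial fixed point guaranteed by Theorem~\ref{thm-fixed-gf} for each $d$ should force the iteration to stop strictly above $\bd 1$ at a quantitatively controlled level that survives the limit.
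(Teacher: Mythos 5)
Your overall scheme (apply Theorem~\ref{thm-fixed-gf} to each truncation $X^d$, then pass to a subsequential limit) is the same as the paper's, but there are two genuine gaps, and the first one is not merely a missing detail but a step that would fail.

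\textbf{The supersolution box contains no nontrivial fixed point.} You correctly observe that $q^\sharp_{[d]}$ is a supersolution for $f^d$, so iterating $f^d$ downward from $q^\sharp_{[d]}$ yields a nonincreasing sequence converging to a fixed point in $[\bd 1, q^\sharp_{[d]}]$. The problem is that this limiting fixed point is always $\bd 1$. Writing $\delta = \log(1+\ep)$ and $\lambda^\sharp = (\delta\phi(1),\dots,\delta\phi(d))$, Hypothesis~\ref{item-gf-bded-infty} gives $g^d_k(\lambda^\sharp) \leq \delta$, and since $\phi(k) > 1$ for $k$ large (because $\phi(k)\to\infty$), this is a \emph{strict} componentwise inequality $g^d(\lambda^\sharp) < \lambda^\sharp$ once $d$ is large. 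Lemma~\ref{lem-monotone-lim} then forces $\lim_{n\to\infty} g^{d,(n)}(\lambda^\sharp) = 0$. So the supersolution lies in the basin of attraction of the trivial fixed point, and the nontrivial fixed point produced by Theorem~\ref{thm-fixed-gf} necessarily has \emph{at least one} coordinate exceeding $q^\sharp_{[d]}$. (This is precisely what the paper exploits in Step~1 of Lemma~\ref{lem-fixed-liminf}: there must exist $k(d)$ with $\lambda^d_{k(d)} \geq \delta\phi(k(d))$.) Consequently, your claimed uniform bound $\tilde q^{(d)}_j \leq (1+\ep)^{\phi(j)}$ does not hold, and the dominated convergence step built on the dominating function $(1+\ep)^{\sum_j \phi(j) X_j(1)}$ is unjustified. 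The paper instead obtains the $k$-by-$k$ upper bound $\limsup_d \lambda^d_k < \infty$ from positive regularity/non-singularity (Lemma~\ref{lem-fixed-limsup}), and obtains uniform integrability for the limit passage directly from Hypothesis~\ref{item-gf-perturb-infty}. (Also, Hypothesis~\ref{item-gf-perturb-infty} gives $f_1((\tilde q^{(d)})^{\alpha_1}) \leq C_1$, not $q^{(d)}_1 \leq C_1$ as you assert; the latter does not follow.)

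\textbf{The lower bound is the crux and is not proved.} You correctly identify the main difficulty --- preventing the limit from collapsing to $\bd 1$ --- but your resolution (``should force the iteration to stop strictly above $\bd 1$'') is a hope rather than an argument, and as noted above the downward iteration does collapse. The paper's Lemma~\ref{lem-fixed-liminf} closes this gap with a two-step argument that you partially anticipate but do not complete: Step~1 uses Hypothesis~\ref{item-gf-bded-infty} to show that for every $d$ there is some $k(d)$ with $\lambda^d_{k(d)} \geq \delta\phi(k(d))$; Step~2 uses Hypothesis~\ref{item-gf-growth-infty} to convert this into a uniform lower bound on $\lambda^d_1$ (or on $\max_{k\leq K}\lambda^d_k$) via $\lambda^d_k \leq \log\psi(k) + \phi(k)\lambda^d_1$ and the decay $\log\psi(k)/\phi(k)\to 0$. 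Your proposal names the right two hypotheses but does not combine them into the needed quantitative bound; without this, the proof does not go through.
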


We prove Theorem~\ref{thm-fixed-pt-infty} by applying Theorem~\ref{thm-fixed-gf} and taking a (subsequential) limit.
Recall the truncated branching process $X^d$ and its generating function $f^d$ from Section~\ref{sec-ctble-type}. 
As in Definition~\ref{def-log-gf}, we define
\eqb \label{eqn-log-gf-infty}
g(\lambda) := \log f(\exp(\lambda)) ,\quad\forall \lambda \in (-\infty,\infty]^{\BB N_0} \quad \text{and} \quad 
g^d(\lambda) := \log f^d(\exp(\lambda)) ,\quad\forall \lambda (-\infty,\infty]^d
\eqe 
where here $\log$ and $\exp$ are applied componentwise. To prove Theorem~\ref{thm-fixed-pt-infty}, it suffices to find $\lambda \in (0,\infty)^{\BB N_0}$ such that $g(\lambda) = \lambda$ (we can then set $q = \exp(\lambda)$). 
 
By Theorem~\ref{thm-fixed-gf}, for each $d\in\BB N$ there exists 
\eqb \label{eqn-fixed-pt-trunc}
\lambda^d = (\lambda^d_1,\dots,\lambda^d_d) \in (0,\infty)^d \quad \text{such that} \quad g^d(\lambda^d) = \lambda^d .
\eqe 
If there is more than one fixed point for $g^d$ in $(0,\infty)^d$, we choose one in an arbitrary manner. The following lemma tells us that for each fixed $k\in\BB N$, the sequence $\{\lambda_k^d\}_{d\in\BB N}$ is pre-compact. 

\begin{lem} \label{lem-fixed-limsup}
For each $k \in \BB N$, we have $\limsup_{d\to\infty} \lambda_k^d < \infty$. 
\end{lem}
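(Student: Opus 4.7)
The plan is to show the stronger statement that $\lambda^d_k \leq \log C_k$ for every $d \geq k$, where $C_k$ is the constant from hypothesis~\ref{item-gf-perturb-infty}. The strategy is to transfer the fixed point information about the truncated process $X^d$ into an inequality that the full generating function $f$ must satisfy, and then apply the ``bounded spatial growth'' hypothesis.

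First, I would set $q^d := \exp(\lambda^d) \in (1,\infty)^d$ and extend it to $\BB N$ by appending ones, i.e., define
\[
\tilde q^d := (q^d_1,\dots,q^d_d, 1, 1, \dots) \in [1,\infty)^{\BB N} .
\]
The truncation identity~\eqref{eqn-gf-truncated} together with the fixed point equation $f^d(q^d) = q^d$ from~\eqref{eqn-fixed-pt-trunc} gives
\[
f_k(\tilde q^d) = f^d_k(q^d) = q^d_k = \tilde q^d_k, \quad \forall k \in \{1,\dots,d\} .
\]
In particular $f_k(\tilde q^d) \leq \tilde q^d_k$, so hypothesis~\ref{item-gf-perturb-infty} applies with $q = \tilde q^d$, yielding $f_k((\tilde q^d)^{\alpha_k}) \leq C_k$.

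Next, I would invoke monotonicity of $f_k$ on $[1,\infty)^{\BB N}$, which is immediate from the definition $f_k(q) = \BB E_k[\prod_j q_j^{X_j(1)}]$ since each $X_j(1) \geq 0$ makes $q_j \mapsto q_j^{X_j(1)}$ nondecreasing on $[1,\infty)$. Because $\alpha_k > 1$ and every component of $\tilde q^d$ lies in $[1,\infty)$, we have $(\tilde q^d)^{\alpha_k} \geq \tilde q^d$ componentwise. Combining monotonicity with the bound from hypothesis~\ref{item-gf-perturb-infty},
\[
q^d_k = f_k(\tilde q^d) \leq f_k\bigl((\tilde q^d)^{\alpha_k}\bigr) \leq C_k ,
\]
so $\lambda^d_k = \log q^d_k \leq \log C_k$ for all $d \geq k$. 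Taking the $\limsup$ in $d$ finishes the proof.

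The argument is short and presents no real obstacle; the only conceptual step is recognizing that the correct extension of $q^d$ to $[1,\infty)^{\BB N}$ (namely, by ones) is exactly what makes the truncated fixed point equation translate into the hypothesis $f_k(q) \leq q_k$ required by~\ref{item-gf-perturb-infty}, after which monotonicity of $f_k$ does the rest.
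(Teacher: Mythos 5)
Your proof is correct, but it takes a genuinely different route from the paper. The paper proves the lemma using only Hypothesis~\ref{item-gf-trunc-infty}: it applies Lemma~\ref{lem-gf-pos} to the truncated process $X^k$ to find $N$ and $p_k := \BB P_k[X_k^k(N) \geq 2] > 0$, notes that $\BB P_k[X_k^d(N) \geq 2] \geq p_k$ for all $d \geq k$, and then uses the $N$-fold fixed-point identity $g_k^{d,(N)}(\lambda^d) = \lambda_k^d$ together with the lower bound $\exp\bigl(\sum_j \lambda_j^d X_j^d(N)\bigr) \geq \exp(2\lambda_k^d)$ on the event $\{X_k^d(N) \geq 2\}$ to deduce $e^{\lambda_k^d} \geq e^{2\lambda_k^d} p_k$, hence $\lambda_k^d \leq \log(1/p_k)$. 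Your argument instead invokes Hypothesis~\ref{item-gf-perturb-infty} (bounded spatial growth): the extension $\tilde q^d$ of the truncated fixed point by ones satisfies $f_k(\tilde q^d) = \tilde q^d_k$, so the hypothesis yields $f_k\bigl((\tilde q^d)^{\alpha_k}\bigr) \leq C_k$, and componentwise monotonicity of $f_k$ on $[1,\infty)^{\BB N}$ then gives $q^d_k = f_k(\tilde q^d) \leq C_k$, i.e.\ $\lambda_k^d \leq \log C_k$. Both arguments are valid. Yours is shorter and gives an explicit uniform bound in terms of $C_k$; the paper's shows that this lemma does not actually depend on Hypothesis~\ref{item-gf-perturb-infty} at all, but only on the regularity conditions for the truncated processes. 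Since Hypothesis~\ref{item-gf-perturb-infty} is in force for the rest of the proof of Theorem~\ref{thm-fixed-pt-infty} anyway, there is no real loss of generality in your approach.
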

\begin{proof}
Fix $k\in\BB N$. 
Since each $X^d$ satisfies the hypotheses of Theorem~\ref{thm-fixed-gf}, we can apply Lemma~\ref{lem-gf-pos} to $X^k$ to find that there exists $N\in\BB N$ (depending on $k$) such that
\eqbn
p_k := \BB P_k\left[ X_k^k(N) \geq 2 \right]  > 0 . 
\eqen
Note that by the definition of $X^d$, we also have
\eqbn
  \BB P_k\left[ X_k^d(N) \geq 2 \right]  \geq p_k  ,\quad\forall d\geq k. 
\eqen
Since $g^{d,(n)}(\lambda^d) =\lambda^d$ for each $d\geq k$ and $n\geq 1$, 
\alb
\exp( \lambda_k^d ) 
= \exp\left( g_k^{d,(N)}(\lambda^d) \right) 
=  \BB E_k\left[ \exp\left( \sum_{j=1}^d \lambda_j^d X_j^d(N) \right) \right] 
\geq \exp( 2\lambda_k^d) p_k  .
\ale
Re-arranging gives $\lambda_k^d \leq \log\frac{1}{p_k }$.  
\end{proof}

Due to Lemma~\ref{lem-fixed-limsup}, we can take a subsequential limit of each of the sequences $\{\lambda_k^d\}_{d\in\BB N}$ to get a sequence $\lambda  = \{\lambda_k\}_{k\in\BB N} \in [0,\infty)^{\BB N}$ which is a candidate for a fixed point of $g$. 
The hardest part of the proof of Theorem~\ref{thm-fixed-pt-infty} is showing that $\lambda \not= 0$. This is the purpose of the next lemma, and also the place where we use Assumptions~\ref{item-gf-growth-infty} and~\ref{item-gf-bded-infty}.

\begin{lem} \label{lem-fixed-liminf}
There exists $K\in\BB N$ such that 
\eqbn
\liminf_{d  \to \infty} \max_{k\in \{1,\dots,K\}} \lambda_k^d > 0 .
\eqen 
\end{lem}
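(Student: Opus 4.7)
The plan is to argue by contradiction, using Assumption~\ref{item-gf-bded-infty} to construct an explicit ``super-solution'' that, after truncation, forces any fixed point of $g^d$ dominated by it to be zero, and then using Assumption~\ref{item-gf-growth-infty} to localize the ``offending'' coordinate of $\lambda^d$ to a bounded index independent of $d$.

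First, set $\delta := \log(1+\ep)$ with $\ep$ as in Assumption~\ref{item-gf-bded-infty}, and define $\mu_j := \phi(j)\delta$ for each $j\in\BB N$. Taking $q_j = e^{\mu_j} = (1+\ep)^{\phi(j)}$ in Assumption~\ref{item-gf-bded-infty} immediately gives $g^{(n)}_k(\mu) \leq \delta$ for every $k\in\BB N$, where $n$ is the index in that assumption. Write $\bar\mu^d := (\mu_1,\dots,\mu_d)$. Since the truncation satisfies $X_j^d(n)\leq X_j(n)$, this implies $(g^d)^{(n)}_k(\bar\mu^d)\leq \delta\leq \phi(k)\delta = \mu_k$ for all $k\leq d$, with strict inequality whenever $\phi(k)>1$. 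Since $\phi(k)\to\infty$, the inequality is strict at some coordinate once $d$ is sufficiently large, so $(g^d)^{(n)}(\bar\mu^d) < \bar\mu^d$ in the sense of Definition~\ref{def-componentwise}. Lemma~\ref{lem-monotone-lim} applied to the truncated process $X^d$ then yields $(g^d)^{(r)}(\bar\mu^d)\to 0$ as $r\to\infty$.

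The crucial observation is the following: if we had $\lambda^d \leq \bar\mu^d$ componentwise, then by monotonicity (Lemma~\ref{lem-mono}) and the fixed-point relation $(g^d)^{(r)}(\lambda^d)=\lambda^d$, we would deduce $\lambda^d \leq (g^d)^{(r)}(\bar\mu^d) \to 0$, contradicting $\lambda^d \in (0,\infty)^d$. Hence for every sufficiently large $d$ there must exist an index $j_d \in \{1,\dots,d\}$ with $\lambda^d_{j_d} > \phi(j_d)\delta$. I would then bring in Assumption~\ref{item-gf-growth-infty}: setting $\bar q^d := (e^{\lambda^d_1},\dots,e^{\lambda^d_d},1,1,\dots)$, the fixed-point equation gives $f_k(\bar q^d) = e^{\lambda^d_k}$ for every $k\leq d$, so the bound $f_k(q) \leq \psi(k)[f_1(q)]^{\phi(k)}$ yields $\lambda^d_k \leq \log\psi(k) + \phi(k)\lambda^d_1$; applied to $k=j_d$ and combined with $\lambda^d_{j_d} > \phi(j_d)\delta$, this rearranges to $\lambda^d_1 > \delta - \log\psi(j_d)/\phi(j_d)$.

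To conclude, pick $J\in\BB N$ large enough that $\log\psi(j)/\phi(j) < \delta/2$ for all $j\geq J$, which is possible by~\eqref{eqn-weight-function}. If $j_d \geq J$, then $\lambda^d_1 > \delta/2$; if $j_d < J$, then $\lambda^d_{j_d} > \phi(j_d)\delta \geq \delta$ (using $\phi\geq 1$). In either case $\max_{k\in\{1,\dots,J\}} \lambda^d_k > \delta/2$ for all large $d$, so the lemma holds with $K := J$, giving in fact $\liminf_{d\to\infty} \max_{k\leq K}\lambda^d_k \geq \delta/2$. I expect the main work to lie in correctly setting up the super-solution $\mu$ and in the truncation inequality $(g^d)^{(n)}(\bar\mu^d)\leq g^{(n)}(\mu)$; once that is in place, the rest is a clean interplay between Lemma~\ref{lem-monotone-lim} (giving the dichotomy ``either $\lambda^d \not\leq \bar\mu^d$ or $\lambda^d=0$'') and Assumption~\ref{item-gf-growth-infty} (turning a large coordinate at an unknown index into a lower bound on a coordinate at a bounded index).
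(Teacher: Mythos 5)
Your proof is correct and follows essentially the same two-step structure as the paper's: first use Assumption~\ref{item-gf-bded-infty} to build the supersolution $\mu_j = \delta\phi(j)$ and force some coordinate $j_d$ of the fixed point $\lambda^d$ to exceed $\delta\phi(j_d)$, then use Assumption~\ref{item-gf-growth-infty} to convert that unbounded-index lower bound into a lower bound on a coordinate with bounded index. The only cosmetic difference is that you invoke Lemma~\ref{lem-monotone-lim} together with the monotonicity of $g^d$ to get the dichotomy ``$\lambda^d \not\leq \bar\mu^d$ or $\lambda^d = 0$'', whereas the paper goes through Proposition~\ref{prop-unstable} to show that the open box below $\bar\mu^d$ lies in the basin of attraction of $0$ --- these are the same mechanism in slightly different clothing, and both correctly reduce the problem to the same pigeonhole step.
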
 
\begin{proof}
\noindent\textit{Step 1: lower bound for a $d$-dependent value of $k$.}
We start by establishing a lower bound for $\lambda_k^d$ which holds for \emph{at least one} value of $k \in \{1,\dots,d\}$. This will be done using our strict subcriticality assumption, Assumption~\ref{item-gf-bded-infty}. 
By that assumption, there exists $\delta > 0$ (in particular, $\delta  = \log s$) such that 
\eqb \label{eqn-use-gf-bded-infty}
\BB E_k\left[ \exp\left( \delta   \sum_{j=1}^\infty \phi(j) X_j(1) \right) \right]   
\leq  e^{\delta \phi(k) }   ,\quad\forall k \in \BB N . 
\eqe 
By the definition~\eqref{eqn-log-gf-infty} of $g_k$,~\eqref{eqn-use-gf-bded-infty} is equivalent to
\eqb
g_k\left( \{\delta \phi(j)\}_{j \in \BB N} \right) \leq \delta \phi(k) ,\quad \forall k \in \BB N . 
\eqe
By~\eqref{eqn-gf-truncated}, this implies that
\eqb \label{eqn-gf-truncated-strict} 
g_k^d \left( \delta \phi(1)  ,\dots, \delta \phi(d) \right) \leq  \delta \phi(k)  ,\quad\forall k\in \{1,\dots,d\} .
\eqe 

By the monotonicity of $g^{d,(n)}$ (Lemma~\ref{lem-mono}), the bound~\eqref{eqn-gf-truncated-strict} implies that
\eqb \label{eqn-use-mono-infty} 
  g_k^{d,(n)} \left( \delta \phi(1)  ,\dots, \delta \phi(d) \right) \leq \delta \phi(k) ,\quad\forall k \in \{1,\dots,d\} . 
\eqe  
By Proposition~\ref{prop-unstable}, \eqref{eqn-use-mono-infty} implies that
\eqb \label{eqn-fixed-liminf-zero}
\lim_{n\to\infty} g^{d,(n)}(\eta) = 0,\quad\forall \eta \in [0,\delta\phi(1)) \times \dots \times [0,\delta \phi(d)) .
\eqe 
Since $g^{d,(n)}(\lambda^d) = \lambda^d$ for every $n\in\BB N$, it follows from~\eqref{eqn-fixed-liminf-zero} that for each $d\in\BB N$, 
\eqb \label{eqn-fixed-pt-one-coord}
\exists k(d) \in \{1,\dots,d\}  \quad \text{such that} \quad \lambda_{k(d)}^d \geq \delta \phi(k(d)) .
\eqe

\noindent\textit{Step 2: transferring to a lower bound for a bounded value of $k$.}
If $k(d)$ were bounded above by a constant, then~\eqref{eqn-fixed-pt-one-coord} would yield the lemma statement.
To deal with the possibility that $\limsup_{d\to\infty} k(d)  = \infty$, we will now establish a lower bound for $\lambda_1^d$ in terms of $\lambda_k^d$, for an arbitrary value of $k\in \{1,\dots,d\}$. This is done using our growth assumption, Assumption~\ref{item-gf-growth-infty}. 

For each $d\in\BB N$ and each $k\in \{1,\dots,d\}$, 
\allb \label{eqn-fixed-pt-exp-bd}
\lambda_k^d
&= g_k^d(\lambda^d) \quad \text{(definition of $\lambda^d$)} \notag \\
&= g_k(\lambda_1^d,\dots,\lambda_d^d,1,1,\dots) \quad \text{(by~\eqref{eqn-gf-truncated})} \notag \\
&\leq \log \psi(k) + \phi(k)   g_1(\lambda_1^d,\dots,\lambda_d^d,1,1,\dots) \quad \text{(by Assumption~\ref{item-gf-growth-infty})} \notag \\
&= \log \psi(k) + \phi(k) \lambda_1^d \quad \text{(definition of $\lambda^d$ and~\eqref{eqn-gf-truncated})} .
\alle

Since $\lim_{k\to\infty} (\log \psi(k)) / \phi(k) = 0$, there exists $K\in\BB N$ (not depending on $d$) such that 
\eqbn
\log \psi(k) \leq (\delta/2) \phi(k) ,\quad \forall k \geq K .
\eqen
If $d\in \BB N$ such that $k(d) \leq K-1$, then~\eqref{eqn-fixed-pt-one-coord} implies that 
\eqbn
\max_{k\in \{1,\dots,K\}} \lambda_k^d \geq \lambda_{k(d)}^d \geq \delta \phi(k(d)) \geq \delta .
\eqen

On the other hand, if $k(d) \geq K$, we combine~\eqref{eqn-fixed-pt-one-coord} and~\eqref{eqn-fixed-pt-exp-bd} to get
\eqb  \label{eqn-fixed-pt-coord-compare}
\log \psi(k(d) ) + \phi(k(d) )  \lambda_1^d  \geq \lambda_{k(d)}^d \geq \delta \phi(k(d))  . 
\eqe 
 By our choice of $K$, we have $\log \psi(k(d) ) \leq (\delta/2) \phi(k(d) )$, so~\eqref{eqn-fixed-pt-coord-compare} gives  
\eqbn
\lambda_1^d \geq  \frac{\delta}{2}  .
\eqen 
\end{proof}

To get a solution to $g(\lambda) = \lambda$ from a subsequential limit of solutions to $g^d(\lambda^d) = \lambda^d$, we will need to interchange limits and expectations. For this purpose we need an upper bound for $\lambda^d$, which is provided by the next lemma. We also get a relation between $\lambda^d$ and the fixed points of the single-type truncations of $X$, which is of independent interest. 

\begin{lem} \label{lem-1d-soln}
Let $k\in \BB N$ such that $\BB P_k[X_k(1) \geq 2] > 0$, which in particular holds for $k=1$. There exists a unique $\eta_k  \in (0,\infty)$ satisfying 
\eqb  \label{eqn-1d-soln} 
g_k(0,\dots, 0,\eta_k , 0 ,0,\dots) = \eta_k 
\eqe 
where here the input of $g_k$ has 0s in all components but the $k$th, which is $\eta_k $. 
Furthermore, for each $d\geq k$, 
\eqb  \label{eqn-1d-soln-ineq}
\lambda_k^d \leq \eta_k  .
\eqe  
Finally, if $t > 1$ is as in Assumption~\ref{item-gf-perturb-infty} and $\phi,\psi$ are as in Assumption~\ref{item-gf-growth-infty}, then for all integers $d \geq k \geq 1 $, 
\eqb  \label{eqn-1d-soln-growth}
\lambda_k^d \leq \phi(k) \log t + \log\psi(k)  .
\eqe  
\end{lem}
\begin{proof}
We first note that $\BB P_1[X_1(1) \geq 2] >0$ since the truncated branching process $X^1$ satisfies the hypotheses of Theorem~\ref{thm-fixed-gf} (Assumption~\ref{item-gf-trunc-infty}), so in particular it is non-singular.

Now let $k\in \BB N$ such that $\BB P_k[X_k(1) \geq 2] > 0$. 
Consider the single type branching process $\wt X^k$ obtained from $X$ by immediately killing all individuals of type $j \in \BB N\setminus \{k\}$. I.e., an individual in $X$ corresponds to an individual in $\wt X^k$ if and only if the individual and all of its ancestors have type $k$. Similarly as in~\eqref{eqn-gf-truncated}, the generating function for $\wt X^k$ is
\eqbn
\wt f_k(x) = f_k(1,\dots,1,x,1,1,\dots) 
\eqen
where here the input of $f_k$ has 1s in all components but the $k$th, which is $x$. 
Since the assumptions of Theorem~\ref{thm-fixed-gf} are satisfied for the truncated branching process $X^k$ (Assumption~\ref{item-gf-trunc-infty}), it is immediate that $\wt X^k$ is subcritical and $\wt f_k$ is continuous from $(0,\infty]$ to $[0,\infty]$. Since we are also assuming that $\BB P_k[X_k(1) \geq 2] > 0$, the hypotheses of Proposition~\ref{prop-single-type} are satisfied for $\wt X^k$. That proposition then gives us the desired unique solution to~\eqref{eqn-1d-soln} (namely, $\eta_k$ is the log of the solution $q$ from Proposition~\ref{prop-single-type}). 

For each $k\geq d$, the definition~\eqref{eqn-fixed-pt-trunc} of $\lambda^d$ gives
\eqb \label{eqn-fixed-pt-first-coord}
e^{\lambda_k^d} 
= f_k(e^{\lambda_1^d} , \dots , e^{\lambda_d^d} , 1,1,\dots) 
\geq f_k(1,\dots,1 , e^{\lambda_k^d} , 1,1,\dots)
= \wt f_k(e^{\lambda_k^d}) .
\eqe 
Since $\wt f_k$ is convex (its second derivative is positive) and satisfies $\wt f_k(1) = 1$ and $\wt f_k(e^{\eta_k}) = e^{\eta_k}$, we have $x < \wt f_k(x)$ for all $x  >  e^{\eta_k} $. Hence~\eqref{eqn-fixed-pt-first-coord} implies that $\lambda_k^d \leq \eta_k$.

Finally, to prove~\eqref{eqn-1d-soln-growth}, we observe that by Assumption~\ref{item-gf-perturb-infty} we have $\wt f_1(t) > t$. By convexity, $\wt f_1(x) \leq x$ for all $x\in [1,e^{\eta_1}]$, so $\eta_1 < \log t$. Then for each $d \geq k \geq 1$, 
\alb 
\lambda_k^d 
&= g_k(\lambda_1^d,\dots,\lambda_d^d,0,0,\dots) \quad \text{(by~\eqref{eqn-fixed-pt-trunc})}  \notag\\
&\leq \phi(k) g_1(\lambda_1^d,\dots,\lambda_d^d,0,0,\dots) + \log \psi(k) \quad \text{(by Assumption~\ref{item-gf-growth-infty})}  \notag\\
&= \phi(k) \lambda_1^d + \log\psi(k)  \quad  \text{(by~\eqref{eqn-fixed-pt-trunc})} \notag\\ 
&\leq \phi(k) \eta_1 + \log\psi(k)  \quad  \text{(by~\eqref{eqn-1d-soln-ineq})} \notag\\
&\leq \phi(k) \log t  + \log\psi(k) \quad \text{(since $\eta_1 < \log t$)} .
\ale
\end{proof}

\begin{proof}[Proof of Theorem~\ref{thm-fixed-pt-infty}]
By Lemma~\ref{lem-fixed-limsup} and a standard compactness argument, there is a sequence $\mcl D$ of positive integers tending to $\infty$ such that
\eqb \label{eqn-ssl}
\lambda_k := \lim_{\mcl D \ni d\to\infty} \lambda_k^d \quad \text{exists} \quad \forall k \in \BB N .
\eqe 

Let $\lambda = \{\lambda_k\}_{k\in\BB N}$. 
By the definitions of $g^d$ and $\lambda^d$, for each $k\in \{1,\dots,d\}$, 
\eqb \label{eqn-before-fatou} 
e^{\lambda_k^d } = \BB E_k\left[ \exp\left( \sum_{j=1}^d \lambda_j^d X_j(1) \right) \right] .
\eqe 
As $\mcl D \ni d \to \infty$, we have $\lambda_j^d \to \lambda_j$. Since a.s.\ there are only finitely many indices $j$ for which $X_j(1)$ is non-zero (see~\eqref{eqn-finite-offspring}), 
\eqb  \label{eqn-fixed-pt-as-conv}
\lim_{\mcl D \ni d \to\infty} \exp\left( \sum_{j=1}^d \lambda_j^d X_j(1) \right) = \exp\left( \sum_{j=1}^\infty \lambda_j X_j(1) \right) ,\quad \text{a.s.} .
\eqe

To show that $g (\lambda) = \lambda$, we need to interchange expectations with the limit in~\eqref{eqn-fixed-pt-as-conv}.  
By Lemma~\ref{lem-1d-soln}, we have $\lambda_j^d \leq \phi(j) \log t  + \log\psi(j)$ for each $d\geq j $. Hence
\eqb  \label{eqn-dominate}
\exp\left( \sum_{j=1}^d \lambda_j^d X_j(1) \right)
\leq \prod_{j=1}^\infty [ \psi(j) t^{\phi(j)}]^{X_j(1)} .
\eqe 
By Assumption~\ref{item-gf-perturb-infty}, the $\BB E_k$-expectation of the right side~\eqref{eqn-dominate} is finite for all $k\geq 1$. By~\eqref{eqn-before-fatou} and~\eqref{eqn-fixed-pt-as-conv} and the dominated convergence theorem, 
\eqbn
\lambda_k = \lim_{\mcl D \ni d \to\infty}  \lambda_k^d  =  \lim_{\mcl D \ni d \to\infty}  g_k(\lambda^d)  = g_k(\lambda) \quad \forall k \in \BB N . 
\eqen
In other words, $g(\lambda) = \lambda$. 
 
By Lemma~\ref{lem-fixed-liminf}, there exists $k \in \{1,\dots,K\}$ such that $\lambda_k > 0$. Hence, $\lambda \in [0,\infty)^{\BB N}\setminus \{0\}$. Since each truncated branching process $X^d$ is positive regular (Definition~\ref{def-bp-basic}), an argument similar to that in the second paragraph of the proof of Lemma~\ref{lem-fixed-pos} shows that in fact $\lambda \in (0,\infty)^{\BB N}$. 
\end{proof}

\begin{example} \label{example-ctble}
We describe a simple class of examples where Theorem~\ref{thm-fixed-pt-infty} implies the existence of a conjugate branching process. These examples can be viewed as toy models for the branching processes associated with the random planar map models discussed in Section~\ref{sec-lqg}.  
We specify a countable type branching process by specifying the $\BB P_k$-law of $\{X_j(1)\}_{j\in\BB N}$ for each $k\in \BB N$.
Let $\BB P_1$ be a probability measure on sequences of non-negative integers $\{X_j(1)\}_{j\in\BB N}$ such that 
\allb  \label{eqn-example-condition}
\BB P_1[X_j(1) \geq 1 ] > 0 ,\quad \forall j \in \BB N ,  \quad 
\BB P_1\left[  X_1(1) \geq 2 \right] > 0 , \notag\\
\BB E_1\left[ u^{\sum_{j=1}^\infty j X_j(1)} \right] < \infty ,\quad\forall u > 1 , \quad \text{and} \quad 
\BB E_1\left[ \sum_{j=1}^\infty j X_j(1) \right] < 1 .
\alle 
Note that these conditions imply that $\BB P_1$-a.s.\ only finitely many of the random variables $X_j(1)$ are non-zero. 
As a concrete example, we can first sample a random variable $N$ from a distribution supported on $\BB N_0$ which has mean less than 1 and finite exponential moments of all orders. Then, conditional on $N$, we sample $\{X_j(1)\}_{j\geq 1}$ uniformly from the set of all sequences of positive integers satisfying $\sum_{j=1}^\infty j X_j(1) = N$. 
For each $k\geq 2$, let $\BB P_k$ be a probability measure on sequences $\{X_j(1)\}_{j\in\BB N}$ which satisfies~\eqref{eqn-example-condition} (with $\BB E_k$ in place of $\BB E_1$) and which is stochastically dominated by the (componentwise) sum of $k$ i.i.d.\ samples from $\BB P_1$.  
One can easily verify that for the branching process with offspring distributions $\{\BB P_k\}_{k\in\BB N}$, the assumptions of Theorem~\ref{thm-fixed-pt-infty} are satisfied with $\phi(k) =k$, $\psi(k) = 1$, and $s,t > 1$ chosen so that
\eqbn
\BB E_1\left[ s^{ \sum_{j=1}^\infty j X_j(1) } \right] < s \quad \text{and} \quad
\BB E_1\left[ t^{  X_1(1) } \right]  > t  
\eqen
(existence of such an $s$ and $t$ follows easily from~\eqref{eqn-example-condition}).
\end{example}

\section{Applications}
 
\subsection{Applications to Liouville quantum gravity}
\label{sec-lqg}

One motivation for studying conjugates of subcritical branching process comes from the theory of random planar maps and Liouville quantum gravity (LQG). In this section we explain this motivation. We will not provide a detailed introduction to random planar maps and LQG in this paper. Instead, we refer the reader to~\cite{bp-lqg-notes,gwynne-ams-survey,sheffield-icm} for introductory expository articles on these topics.

\subsubsection*{Loop-decorated random planar maps and their gasket decomposition}

Let $k\in\BB N$. Consider a random planar map $M_k$ with boundary of perimeter $k$ (not necessarily simple), i.e., there are $k$ edges on the boundary of the external face. Let $\mcl L_k$ be a collection of vertex-disjoint loops (simple cycles) on the dual map $M_k^*$. We require that there is a special loop $\ell_\bdy \in \mcl L_k$ which visits every face of $M_k$ incident to $\bdy M_k$. See Figure~\ref{fig-loop-decorated}, left, for an illustration. We note that similar statements to the ones in this subsection also hold in slightly different setups.

\begin{figure}[ht!]
\begin{center}
\includegraphics[width=0.75\textwidth]{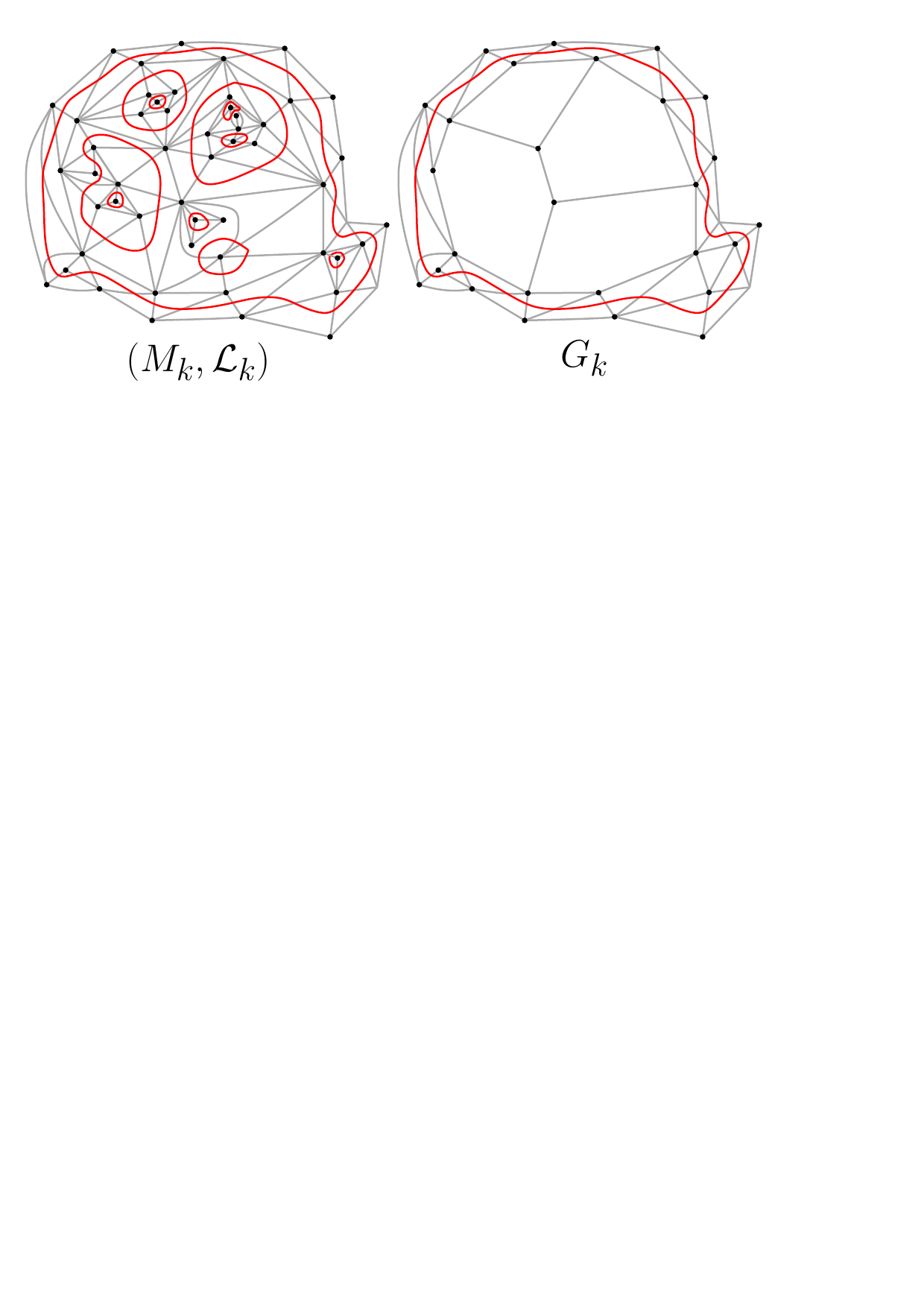}  
\caption{\label{fig-loop-decorated} A loop-decorated random planar map $(M_k,\mcl L_k)$ and its gasket $G_k$. 
}
\end{center}
\end{figure}

For a number of natural loop-decorated random planar maps $(M_k,\mcl L_k)$ of this type, including random planar maps decorated by versions of the $O(n)$ loop model\footnote{One possible variant of random planar maps decorated by the $O(n)$ loop model is as follows. Consider the set $\mcl T_k$ of all triangulations $M_k$ with boundary of perimeter $k$, decorated by a collection of vertex-disjoint loops $\mcl L_k$ on the dual map as above. Let $x,y > 0$ be parameters. Sample a pair $(M_k , \mcl L_k) \in \mcl T_k$ with probability proportional to $x^{\# \mcl A(M_k)} y^{\# \mcl B(M_k)} n^{\#\mcl L_k}$, where $\mcl A(M_k)$ (resp.\ $\mcl B(M_k)$) is the set of non-external faces (triangles) of $M_k$ which are (resp.\ are not) visited by loops. We need to assume that $x$ and $y$ are sufficiently small so that this defines a finite measure. To get convergence to LQG decorated by CLE, we need to further assume that the parameters $x$ and $y$ are tuned to certain critical values. See~\cite{shef-cle,bbg-recursive-approach,bbg-loop-models,bbg-bending} for further discussion on this point.}
with parameter $n > 0$, the pair $(M_k,\mcl L_k)$ can be described by a multi-type branching process via the so-called \textbf{gasket decomposition}~\cite{bbg-recursive-approach,bbg-loop-models,bbg-bending}. 

To explain how this works, we define the \textbf{gasket} $G_k$ of $(M_k,\mcl L_k)$, i.e., the planar map obtained from $M_k$ by deleting each face which is visited by a loop in $\mcl L_k \setminus \{\ell_\bdy\}$, as well as each vertex, edge, or face which is disconnected from $\ell_\bdy$ by such a loop. See Figure~\ref{fig-loop-decorated}, right.  
Then, for certain random pairs $(M_k,\mcl L_k)$, the loop-decorated submaps of $M_k$ contained in the ``holes'' of $G_k$ (i.e., the regions where we removed faces) are conditionally independent given $G_k$. The conditional law of each such loop-decorated submap enclosed by an outermost loop of perimeter $p$ is the same as the law of $(M_p , \mcl L_p)$ (i.e., it is a loop-decorated random planar map sampled in the same way as the original map, but with perimeter $p$ instead of $k$). Finally, the conditional law of $G_k$ given the lengths of the holes in $\mcl L_k$ is uniform over all possibilities. 

By applying this iteratively, we get that the lengths of the loops in $\mcl L_k$ are described by a multi-type branching process. The type of each loop is its length. The starting loop (at time zero) is $\ell_\bdy$. The offspring of each loop are the outermost loops in the submap it surrounds. 
For most loop-decorated random planar maps considered in the literature, the planar map $M_k$ is a.s.\ finite, which implies that the extinction probability of this branching process is one. 

In many interesting cases (including, e.g., random planar maps decorated by variants of the $O(n)$ loop model for $n\in (0,2]$), if the parameters for the model are tuned to their critical values, then $(M_k,\mcl L_k)$ is believed to converge under an appropriate scaling limit to a \textbf{$\gamma$-Liouville quantum gravity (LQG) surface} with the disk topology, with parameter $\gamma \in (\sqrt{2},2]$ decorated by a \textbf{conformal loop ensemble (CLE$_\kappa$)} with parameter $\kappa \in \{\gamma^2,16/\gamma^2\}$. One way to formulate this convergence is to first embed the random planar map into the unit disk in some way, e.g., via the circle packing~\cite{nachmias-circle-packing} or the Tutte embedding~\cite{gms-tutte}. The conjecture is that under this embedding, the counting measure on vertices, re-scaled by the total number of vertices, converges to the LQG area measure, and the embedded loops in $\mcl L_k$ converge to CLE$_\kappa$. This type of convergence has been proven in a few special cases~\cite{gms-tutte,hs-cardy-embedding}, but is still open for most models. See, e.g.,~\cite[Section 4]{bp-lqg-notes} for further discussion.

See~\cite{shef-cle} for the definition of CLE and~\cite{msw-non-simple-cle,msw-simple-cle-lqg} for the continuum analog of the gasket decomposition in the setting of CLE-decorated LQG.

\subsubsection*{Supercritical LQG}

Recent works have initiated the rigorous study of LQG in the \textbf{supercritical} phase, which corresponds to parameter values $\gamma\in\BB C$ with $|\gamma|=2$, or equivalently values of the central charge parameter between 1 and 25~\cite{ghpr-central-charge,dg-supercritical-lfpp,pfeffer-supercritical-lqg,dg-uniqueness,ag-supercritical-cle4,bgs-supercritical-crt}. This phase is much more mysterious than the case when $\gamma\in (0,2]$, even at a physics level of rigor, but is potentially more interesting than the case when $\gamma\in (0,2]$ from the perspective of string theory and Yang-Mills theory (see~\cite[Remark 1.4]{ag-supercritical-cle4} for further discussion of this point). Unlike LQG surfaces with $\gamma \in (0,2]$, supercritical LQG surfaces are not homeomorphic to two-dimensional manifolds. Rather, supercritical LQG surfaces have infinitely many infinite spikes. More precisely, if we parametrize a supercritical LQG surface by a domain $U\subset\BB C$, then there is an uncountable, Euclidean-dense, zero Lebesgue measure set of ``singular points'' in $U$ which lie at infinite distance from every other point with respect to the supercritical LQG metric.  

Particularly relevant to the present paper is~\cite{ag-supercritical-cle4}, which gives a continuum analog of the gasket decomposition for supercritical LQG surfaces decorated by CLE$_4$. Unlike in the case when $\gamma \in (0,2]$, the continuous state branching process involved in this model has supercritical behavior, e.g., in the sense that for every $r > 0$, the number of loops of supercritical LQG length at least $r$ in the $n$th generation a.s.\ goes to $\infty$ as $n\to\infty$~\cite[Proposition 3.1]{bgs-supercritical-crt} (this is related to the presence of infinite spikes in supercritical LQG surfaces). 

It is natural to ask what sorts of random planar maps converge to supercritical LQG surfaces, possibly decorated by CLE$_4$. A natural first thing to try is to look at one-parameter families of random planar map models which converge to $\gamma$-LQG for $\gamma\in (0,2]$ and extend the range of allowable parameter values.
For example, one can look at random planar maps sampled with probability proportional to $(\det \Delta)^{-(26-\cc )/2}$, where $\Delta$ is the discrete Laplacian and $\cc\in (1,25)$ is the central charge.

However, heuristic arguments and numerical simulations suggest that many interesting random planar maps obtained in this way behave like \textit{branched polymers}, meaning that they converge in the scaling limit to Aldous' continuum random tree~\cite{aldous-crt1} (which of course has very different geometry from supercritical LQG). 
See~\cite[Section 2.2]{ghpr-central-charge} and~\cite[Section 1.1]{bgs-supercritical-crt} for further discussion and references related to the physics literature on this topic and~\cite{feng-fk-triviality, bgs-supercritical-crt} for some rigorous results.

\subsubsection*{Constructing random planar maps via conjugate branching processes}

A potential explanation for why various natural random planar maps converge to the continuum random tree, instead of to supercritical LQG, was proposed in~\cite[Section 2.2]{ghpr-central-charge} (using some ideas from~\cite{apps-central-charge}) and made rigorous for a class of random planar maps in~\cite{bgs-supercritical-crt}. Loosely speaking, the reason is that the random planar maps considered in the physics literature always have a finite total number of vertices and edges. On the other hand, random planar maps whose scaling limit is supercritical LQG should be infinite, with infinitely many ends\footnote{
An \emph{end} of a graph $G$ is a direction in which the graph extends to infinity. Formally, a ray is an equivalence class of simple paths $P : \BB N \to \mcl V(G)$, where two simple paths $P$ and $\wt P$ are declared to be equivalent if and only if there is a finite set of vertices $X$ of $G$ and two distinct connected components $A$ and $B$ of $G\setminus X$ such that $A$ contains infinitely many vertices of $P$ and $B$ contains infinitely many vertices of $\wt P$.
}
(at least with high probability).
In particular, such random planar maps cannot be defined directly via a Radon-Nikodym derivative with respect to the uniform measure on some countable set of possibilities (which is the most common way to define random planar maps in the literature). 

The first combinatorially natural family of random planar map models which should converge to supercritical LQG was constructed in~\cite[Section 3.2]{ag-supercritical-cle4}. These random planar maps are constructed via a version of the gasket decomposition, but using a supercritical multi-type branching process rather than a critical or subcritical multi-type branching process. 
 
On the other hand, if we start with a random planar map which converges to supercritical LQG and condition on the (small-probability) event that it is finite, then its scaling limit should be the continuum random tree. This explains why various random planar maps which at first glance appear to be related to supercritical LQG instead converge to the continuum random tree. This was made rigorous for a class of random planar maps in~\cite{bgs-supercritical-crt}. More precisely,~\cite[Theorem 1.4]{bgs-supercritical-crt} shows that for a family of random planar maps believed to converge to supercritical LQG --- constructed from a supercritical multi-type branching process as in~\cite{ag-supercritical-cle4} --- one does indeed have convergence to the CRT when one conditions the map to be finite.

In summary, it is non-trivial to construct random planar maps whose scaling limit is supercritical LQG. However, it is easy to construct random planar maps which should be obtained from random planar maps which converge to supercritical LQG by conditioning on the event that the latter random planar map is finite. Hence, to construct additional models of random planar maps which converge to supercritical LQG, which are potentially more combinatorially natural than the ones in~\cite{ag-supercritical-cle4,bgs-supercritical-crt}, we want to find a way to ``invert the operation of conditioning a random planar map to be finite''. In light of the gasket decomposition for random planar maps, and its continuum analog for supercritical LQG and CLE$_4$ from~\cite{ag-supercritical-cle4}, a reasonable approach to achieving this is as follows.
\begin{itemize}
\item Start with a natural model of finite loop-decorated random planar maps with boundary which can be described in terms of a subcritical multi-type branching process via the gasket decomposition. We expect that the scaling limit of this random planar map should be the CRT.
\item Find a canonical supercritical multi-type branching process (or a canonical family of supercritical multi-type branching process) with the property that conditioning this supercritical branching process on extinction gives us back the original subcritical branching process. In other words, find a conjugate of the original branching process (in the sense of Definition~\ref{def-conjugate}). 
\item Construct a loop-decorated random planar map using the gasket decomposition, but with the conjugate supercritical branching process in place of the original subcritical branching process. This loop-decorated random planar map will be infinite, with infinitely many ends, with high probability if the initial perimeter is large. Moreover, conditioning on the rare event that this planar map is finite will give us back the original finite random planar map.  
\end{itemize} 
In order to carry out this procedure, one needs to understand when a conjugate for a subcritical branching process exists, which is the question we consider in the present paper. The other major step is a combinatorial analysis of the particular random planar map model in order to check the criteria for a conjugate branching process to exist. 

As a concrete example, we have reason to believe that this procedure should be applicable, and should lead to a random planar map whose scaling limit is supercritical LQG, for four-valent planar maps decorated by the six-vertex model in the case when the weight $C$ assigned to each alternating vertex satisfies $C > 2$. In the case when $C\in [0,2]$, the scaling limit of this model is believed to be critical ($\gamma=2$) LQG~\cite{zinn-justin-six-vertex,kostov-six-vertex}. 

The potential non-uniqueness of conjugate branching processes (recall Example~\ref{counterexample}) is interesting in the random planar map setting. It is reasonable to guess that for many natural models of finite loop-decorated random planar maps, the conjugate branching process is unique, even though this is not true for all multi-type branching processes. Another possible scenario is that for some models of finite loop-decorated random planar maps, the conjugate branching process is not unique, and we get multiple models of infinite random planar maps which all converge to supercritical LQG.

\subsection{Applications to biology} \label{sec-bio}
Applications of multi-type branching process in biology also motivates our study of the conjugate branching process.  
Multi-type branching processes provide a natural framework for studying how different types of organisms within a population reproduce, interact, and contribute to overall population growth. They have been particularly successful in modeling tumor growth. Below, we explain how the conjugate branching process can be useful in this context.

\subsubsection*{Cancer growth modeled by a multi-type branching process}
It is now well accepted that carcinogenesis is a multistage process. Cells accumulate mutations in genes that increase their fitness, or replication rate, and ultimately transforming them into tumor cells. This progression suggests that the clonal expansion of cancer cells can be modeled as a multi-type branching process, where cells of the same type share a specific set of mutations and therefore the same offspring distribution, and a cell produces cells of other types through additional mutations. This branching mechanism has been observed in many human cancers, including leukemia, breast, liver, colorectal, ovarian, prostate, kidney, melanoma, and brain cancers. See \cite[Section 6]{Davis2017} and the references therein. 

By changing types and offspring distributions, multi-type branching processes can model a range of cancer-related phenomena. In cancer initiation, types can represent stages in the cancer progression. For example, colon cancer progresses through four steps. The first two steps are the sequential inactivation of two tumor-suppressor genes. The third step is oncogene activation. The final step is the inactivation of a ``housekeeping gene'' that regulates cell division and DNA repair. This progression can thus be modeled as a four-type branching process. Moreover, drug resistance can be modeled as a two-type branching process, with one type representing drug sensitive cells and the other type representing drug resistant cells. Metastasis, the process by which cancer cells leave the original tumor, spread to other parts of the body, and forms new tumors, can be modeled as a three-type branching process. The three types are ordinary tumor cells, cells capable of metastasis but haven't done so, and cells that have spread to other locations. For detailed construction and results, see \cite{Durrett2015cancer} and the references therein.

The above models can be summarized as a multi-type branching process as follows. Consider a $d$-type branching process $X=\{X(n)\}_{n\in \BB N_0}$ where $X(n)=(X_1(n), \cdots, X_d(n))$. At each time step, a type $j$ cell produces an offspring of the same type with probability $b_j$, dies with probability $d_j$, or gives brith to an offspring of type $j+1$ that acquires an additional mutation with probability $u_j$. We note that $u_j$ is related to the mutation rate, which typically ranges from $10^{-9}$ to $10^{-5}$ and is significantly smaller than the other two probabilities. Most cancer models are in the continuous time setting. However, for simulation purposes, discrete time model like the one considered here are also used, for example in \cite{Bozic2010}.

\subsubsection*{Conjugate branching process provides alternative evolutionary hypothesis}
Cancer is a genetic disease. To understand the tumor growth in a specific patient, we need to understand both the genotype and phenotype of cancer cells within the tumor. This is challenging because tumor cells exhibit high heterogeneity and genotype-phenotype relationship in cancer is still largely unknown.
There are two categories of mutations during the natural progression of cancer, driver mutations which provide a reproductive advantage to cancer cells, and passenger mutations which do not contribute to cancer development directly and instead accumulate randomly as the cancer cell replicates. 
In addition to driver mutations which initiate cancer, patients could acquire lots of passenger mutations which are neutral and don't alter the reproduction rate. 
When the environment changes, as it does after cancer treatment, some passenger mutation may exhibit resistance that allows cells to survive from the therapy. 
Finding passenger mutations is much easier than driver mutations due to the vast genome, yet their high diversity complicates the assessment of their roles in cancer evolution. 
To addressing these complexities, a feasible approach is to first formally define the underlying assumptions of the tumor’s evolutionary dynamics and then statistically evaluate these hypotheses as suggested in \cite[Section 4]{Sottoriva2017}. 
Bayesian inference is particularly suited for this purpose. 
From the evolutionary hypothesis, we have prior knowledge about model parameters. Using Bayesian inference, we can test how well the model explains the observed data.
The conjugate branching process can be useful in providing alternative evolutionary hypothesis so that the model can be adjusted to better fit complex multi-dimensional data. 
As an example, we illustrate how the conjugate branching process can help detect genotypes resistant to cancer treatment at an early stage.
 
Currently, there is no universal cancer treatment effective for all patients with the same cancer type. Treatment failure often results from drug resistance, which can either exist before treatment (intrinsic resistance) or develop after treatment begins (acquired resistance). 
For example, targeted therapy employs drugs or other agents to target specific changes in cancer cells that promote growth, division, and spread. Despite often showing significant efficacy for most patients at the start, some patients may exhibit insensitivity from the beginning due to pre-existing genetic alterations that are resistant to the treatment. 
Moreover, some patients who respond well initially may later become resistant because the majority of the tumor cells are sensitive to the treatment, but the resistant cells would proliferate over time and result in recurrence. 
Early detection of potentially resistant genotypes is therefore crucial, as it could inform timely adjustments in therapeutic strategies. 
However, to pinpoint specific drug-resistant genotypes, there are two main difficulties.  
First, most cancers require immediate treatment upon diagnosis. Therefore the evolutionary trajectory of a tumor before treatment is generally unclear, except in a few well-studied cases like colorectal and colon cancers. 
Second, there is extensive genomic diversity among cancer cells, and phenotype such as drug resistance, does not have a one to one correspondence with genotypes. 
The conjugate branching process offers a pathway to identifying genotypes present in current patients that may be resistant to treatment, regardless of whether they emerged before or during initial therapy.

Suppose we use a multi-type branching process described above to model cancer evolution after a specific treatment. Here, the number of types $d$ corresponds to the number of distinct phenotypes observed across patients. In cancer evolution, while it is feasible to measure genotypes, phenotypes cannot be measured directly, and genotypes often include redundancies due to the abundance of neutral mutations. We want to include all relevant types in cancer evolution without overcomplicating the model. Thus, it is in practice a challenging task to select an appropriate number of types in the multi-type branching model. To analyze the model, we estimate from data the offspring distribution, or equivalently parameters $(b_j)_{j=1}^d$, $(d_j)_{j=1}^d$ and $(u_j)_{j=1}^d$. If this treatment proves effective for a substantial fraction of patients, then the empirical offspring distribution may suggest a subcritical multi-type branching process that is destined for extinction.

However, this is not true because an effective treatment does not guarantee complete eradication of the cancer or prevent relapse in all cases. 
If a treatment is effective for a patient, it suggests that the majority of their tumor cells do not have drug resistance mutations and the number of cells decreases below a detectable threshold. Thus, observing data from a successfully treated patient can be interpreted as conditioning on extinction.
This implies that the subcritical empirical offspring distribution we observe is the result of a supercritical branching process conditioned on extinction. By Theorem \ref{thm-fixed-gf}, we know such supercritical processes exist. 
Moreover, since the conjugate supercritical branching process is not unique, multiple evolutionary hypotheses may be considered and need to be tested against data.

By setting the conjugate supercritical branching process as the evolutionary hypothesis, we can further investigate the presence of intrinsic drug resistance and the likelihood of cancer recurrence.
Cancer cells may carry pre-existing passenger mutations that reduce their sensitivity to certain drugs. It is possible to infer genotypes that are resistant to drug from the conjugate supercritical branching process. 
Cancer often becomes undetectable when cell numbers fall below a certain threshold. Recurrence can occur if residual cancer cells survive initial treatment and proliferate. With the conjugate supercritical branching process, we can estimate the probability of cancer recurrence.
It can be approximated by the probability that the conjugate supercritical branching process starting from an initial population of drug insensitive cells of size $x$ proliferates and eventually surpasses the detection threshold $M$. Here $x$ is a lot smaller than $M$ and $M$ is typically of the order from $10^8$ to $10^9$ \cite{Del2009}.

Multi-type branching processes have been widely applied in various fields, including the spread of infectious diseases \cite{Jacob2010, Penisson2012}, gene amplification \cite{Harnevo1991}  and so on. Usually in the biology context, the number of types is finite. The conjugate branching process can be useful in these contexts. For any real-life phenomenon modeled by a multi-type branching process, empirical data may sometimes suggest a subcritical process where the offspring distribution inherently leads to population decline and eventual extinction. However, it is also possible that the data we observed is the result of a supercritical process conditioned on extinction. 
In cases where the latter scenario is suspected, Theorem \ref{thm-fixed-gf} provides mathematical justification for reconstructing the original supercritical processes, allowing us to estimate the actual offspring distribution and gain clearer insights into the underlying dynamics of the model.

\bibliography{cibib,extrabib}
\bibliographystyle{hmralphaabbrv}

\end{document}